\documentclass[10pt, a4paper, reqno]{amsart}
\numberwithin{equation}{section}
\usepackage{amsmath,amssymb,amsthm}
\usepackage{xcolor,textcomp}
\usepackage{graphicx, epstopdf}
\usepackage{braket,amsfonts}
\usepackage{dsfont}
\usepackage{mathtools}
\usepackage{mwe}
\usepackage[top=2.9cm,bottom=2.75cm,left=2.75cm,right=2.75cm]{geometry}
\usepackage{hyperref}
\hypersetup{colorlinks=true,linkcolor=red,citecolor=blue,filecolor=magenta,urlcolor=cyan}
\usepackage[capitalise]{cleveref}
\usepackage{comment}
\usepackage{tikz}
\usepackage{pgfplots}
\usepackage{stmaryrd}
\usepackage[mathscr]{euscript}
\usepackage[sort,nocompress]{cite}




\newcommand{\vphi}{\varphi}

\newcommand{\B}{\mathcal B}

\newcommand{\C}{\mathcal C}

\newcommand{\oo}{\mathcal{O}}

\newcommand{\y}{\widetilde{y}}

\newcommand{\Y}{\mathcal{Y}}

\newcommand{\G}{\mathcal{G}} 

\newtheorem{theorem}{Theorem}[section]
\newtheorem{lemma}[theorem]{Lemma}
\newtheorem{remark}[theorem]{Remark}

\newtheorem{proposition}[theorem]{Proposition}

\makeatletter
\@namedef{subjclassname@2020}{%
	\textup{2020} Mathematics Subject Classification}
\makeatother


\title[Insensitizing control problem for a KdV-KdV system]{ Insensitizing control problem for the Hirota-Satsuma system of KdV-KdV type}
%
%

\author[K. Bhandari]{Kuntal Bhandari$\,^\dagger$}
\thanks{$^\dagger\,$Institute of Mathematics of the  Czech Academy of Sciences, \v{Z}itn\'a 25, 11567 Praha 1, Czech Republic;  bhandari@math.cas.cz.}

\keywords{Korteweg-de Vries system,  insensitizing control, Carleman estimate,  observability,   inverse mapping theorem}

\subjclass[2020]{35K52 - 35Q53 - 93B05 - 93B07 - 93B35}
\date{\today}

\begin{document}
	\begin{abstract}
	This paper is concerned with the existence of insensitizing controls for a nonlinear coupled system of two Korteweg-de Vries (KdV) equations, typically known as the Hirota-Satsuma system.
	 The  idea is to look for controls such that some functional of the states (the so-called sentinel) is insensitive to the small perturbations of  initial data.  Since the system is coupled, 
	 we  consider a sentinel in which we observe both components of the system in a localized observation set. By some classical argument, the insensitizing problem is then reduced to a null-control problem for an extended system where the number of equations is doubled. We study the null-controllability for the linearized model associated  to that extended system by means of a suitable Carleman estimate  which is proved in this paper.    Finally, the local null-controllability  of the extended (nonlinear) system is obtained by applying the inverse mapping theorem, and this  implies   the required insensitizing property for the concerned model.
\end{abstract}

	
		\maketitle
	
\section{Introduction}\label{Sec-Introduction}

\subsection{Statement of the problem}\label{Sec-Problem-stat}
	In this article, we study an insensitizing control problem for the Hirota-Satsuma system of two coupled Korteweg-de Vries (KdV) equations. In $1981$, R. Hirota and J. Satsuma \cite{HIROTA-SATSUMA} have proposed a system of two coupled KdV equations, namely
	\begin{align*}
		\begin{cases}
				u_t -\frac{1}{2}u_{xxx} - 3uu_x + 6vv_x = 0, \\
			v_t + v_{xxx} + 3uv_x  =0,
		\end{cases}
	\end{align*} 
which describes the interactions of two long waves with different dispersion relations. They have presented a soliton solution of the above system, and shown that it has two- and three-soliton solutions  under a special connection between the dispersion relations of the two long waves;   we  refer \cite{HIROTA-SATSUMA} for more details. 

\smallskip
 
Let us  describe the problem on which  we are going to work in the present article. Let $T>0$ be  given finite time and $L>0$ be any finite length. Denote $Q_T:= (0,T)\times (0,L)$. We further assume two non-empty open sets $\omega\subset (0,L)$ and $\mathcal O \subset (0,L)$ verifying $\mathcal O \cap \omega \neq \emptyset$, and that there is a non-empty open set $\omega_0$ such that $\omega_0 \subset \subset \mathcal O \cap \omega$.

\smallskip 

We now consider the following coupled KdV system:
	\begin{align}\label{Control-system-main}
		\begin{cases}
			u_t -\frac{1}{2}u_{xxx} - 3uu_x + 6vv_x = h_1 \mathds{1}_\omega + {\xi}_1  &  \text{in } Q_T, \\
		v_t + v_{xxx} + 3uv_x  = h_2 \mathds{1}_\omega + \xi_2  &  \text{in } Q_T, \\
		u(t,0) = u(t,L) = u_x(t,0) = 0 & \text{for } t\in (0,T), \\
		v(t,0) = v(t,L) = v_x(t,L) =0 & \text{for } t\in (0,T) , \\
		u(0) = u_0 + \tau \widehat{u}_0 , \ \, v(0) = v_0 + \tau \widehat{v}_0  & \text{in } (0,L)  ,
		\end{cases}
	\end{align}
where  $u=u(t,x)$ and $v=v(t,x)$ are the state variables, $h_i=h_i(t,x)$, $i=1,2$ are localized control functions acting on the subset $\omega$, and $\xi_i=\xi_i(t,x)$, $i=1,2$, are given external source terms. In \eqref{Control-system-main}  the initial states $(u(0),v(0))$ are partially unknown in the following sense:

\smallskip 

\begin{itemize}
	\item[--]  $(u_0,v_0) \in [L^{2}(0,L)]^2$ are given, and 
	\item[--]  $(\widehat{u}_0 , \widehat{v}_0)\in [L^{2}(0,L)]^2$ are unknown which satisfy $\|\widehat{u}_0 \|_{L^2(0,L)} = \|\widehat{v}_0 \|_{L^2(0,L)} = 1$. They represent some {\em uncertainty} on the initial data. 
	
	\item[--]   $\tau\in\mathbb R$ is some unknown parameter which is small enough. 
\end{itemize}

Our aim is to study the insensitizing control problem for the system \eqref{Control-system-main}.     This  topic has been originally introduced by J.-L. Lions  \cite{Lio90}  which concerns with the existence of controls that make   a certain functional (depending on the state variables) insensible
with respect to small perturbations of the initial data.
   
In our case, we  consider the following functional (the so-called {\em sentinel})  defined  on the set of solutions to \eqref{Control-system-main},  given by 
\begin{align}\label{sentinel_functional}
	J_\tau(u,v) : =	\frac{1}{2}\iint_{(0,T)\times \mathcal O} |u|^2 + \frac{1}{2}  \iint_{(0,T)\times \mathcal O} |v|^2 ,
\end{align}
where $\mathcal O$ is the so-called {\em observation domain}.  Then, the insensitizing control problem associated to \eqref{Control-system-main} can be stated as follows.

\smallskip 

Let $(u_0, v_0)\in [L^2(0,L)]^2$ and $(\xi_1, \xi_2)\in [L^2(Q_T)]^2$ be given. We say that the control functions $(h_1,h_2)\in [L^2((0,T)\times \omega)]^2$ insensitize the sentinel functional $J_\tau$ given by  \eqref{sentinel_functional}, if  
\begin{equation}\label{eq:ins_satisfy}
	\left.\frac{\partial J_\tau(u,v)}{\partial \tau}\right|_{\tau=0}=0, \quad \forall \, (\widehat u_0,\widehat v_0)\in [L^{2}(0,L)]^2  \textnormal{ with } \|\widehat u_0 \|_{L^{2}(0,L)}=\|\widehat v_0\|_{L^{2}(0,L)} = 1.
\end{equation}

\smallskip 

Thus, it amounts to find a pair of control functions $(h_1,h_2)\in [L^2((0,T)\times \omega)]^2$ such that the uncertainty in the initial data does not affect the measurement of $J_\tau$. When the condition \eqref{eq:ins_satisfy} holds for a pair of controls $(h_1,h_2)$,  the sentinel $J_\tau$ is said to be {\em locally insensitive} to  the perturbations of  initial data. In other words, \eqref{eq:ins_satisfy} indicates that
the sentinel does not detect the variations
of the given initial data $(u_0,v_0)$  by  small unknown perturbation $(\tau \widehat u_0, \tau\widehat v_0)$ in the observation domain $\mathcal O$. 

\subsection{Bibliographic comments}

To begin with, the controllability of dispersive systems is gaining its popularity among several researchers;   the first internal controllability   results regarding the single KdV equations have been presented by Russell and Zhang  \cite{Russell-Zhang-1,Russell-Zhang-2} in the periodic domains.  We also cite the  work  \cite{Rosier-KdV-CPDE} by Rosier and Zhang related to this topic. In \cite{Glass-Guerrero}, Glass and Guerrero proved the exact internal controllability of the KdV equation when the control acts in a neighborhood of the left endpoint.  Later on, Capistrano-Filho, Pazoto and Rosier  \cite{Roberto-Rosier-internal-kdv} established a Carleman estimate yielding an observability inequality to conclude the internal controllability for the KdV equations.
More recently, the internal null-controllability  of a generalized Hirota-Satsuma system (which is a coupled system of three KdV equations with a first order coupling)  has been proved by Carre\~{n}o, Cerpa and Cr\'{e}peau in \cite{Carreno-Hirota-Satsuma}.  

On the other hand, the  {\em boundary controllability} for KdV equations has been intensively investigated by several renowned researchers, see for instance,  \cite{Cerpa-single-Kdv,Cerpa-BOUNDARY-single-KDV,Zhang-SICON, Cerpa-Zhang-Boundary,GLASS-Guerreo-System-Control,Coron-nonlinear-KdV,Rosier-single-Kdv}.   Most of those works were concerned with the system 
\begin{align}\label{sys-boundary}
	\begin{cases}
		u_t + u_x +u_{xxx} + uu_x = 0    &\text{in } (0,T)\times (0,L) , \\
		u(t,0)= p_1(t), \  u(t,L) = p_2(t),  \ u_x(t,L) = p_3(t) & \text{in } (0,T) , 
	\end{cases}
\end{align}
where $p_1,p_2, p_3$ are control inputs.  In particular, 
Rosier \cite{Rosier-single-Kdv} proved that the linearized model of \eqref{sys-boundary} is exactly controllable with a control $p_3\in L^2(0,T)$ (and $p_1=p_2=0$) if and only if $L$ does not belong to the following countable set of critical length 
\begin{align*}
	\mathcal N = \left\{ \frac{2\pi}{\sqrt{3}} \sqrt{k^2+ kl + l^2} \, : \, k,l \in \mathbb N^*   \right\} .
\end{align*} 
He has also shown that when the linearized equation is controllable, the same holds true for the nonlinear equation.  But the converse is not necessarily true, 
as it has been proved in \cite{Cerpa-single-Kdv,Cerpa-BOUNDARY-single-KDV,Coron-nonlinear-KdV} that the nonlinear KdV equation is still
controllable even if $L\in \mathcal N$. Finally, we refer \cite{Roberto-Boundary_KDV,Micu,Cerpa-Note_on-Micu} where the boundary controllability of some system of KdV equations have been addressed.  

\medskip 

Let us talk about the insensitizing control problems for pdes.  We mention that the pioneer results concerning the existence of insensitizing controls were obtained by de Teresa in  \cite{deT00}, and by Bodart et al.  in \cite{BF95} for the linear and semilinear heat equations. 

Since then, many works have been devoted to study the insensitizing problem from different perspectives. The authors in \cite{BGBP04,BGP04b,BGP04c} study such problems for linear and semilinear heat equations with different types of nonlinearities and/or boundary conditions.  In the direction of wave equation,  Alabau-Boussouira \cite{Alabau2014insensitizing} proved the existence of exact insensitizing controls for the scalar wave equation.  

Guerrero  \cite{Sergio} has considered an insensitizing control problem for the linear parabolic equation where the  {\em sentinel functional} is dependent on the gradient of the solution.  Later,  he  has studied such control problem for the Stokes equation \cite{Guerrero2007Stokes} where the sentinel  is  taken in terms of  the {\em curl of solution}. 
In the context of insensitizing problems for Navier-Stokes equations  we quote the works \cite{Gue13,CG14},  and for the Boussinesq system  we cite \cite{CGG15,Cn17}.  We also refer \cite{CCC16} where the insensitizing control problem for a phase field system  has been explored.   Furthermore, a numerical study for the insensitizing property of semilinear parabolic equations has been pursued in \cite{BHSdeT19}. 

It is worth mentioning that insensitizing problems  for the fourth-order parabolic equations have been treated  in \cite{Kas20} and with respect to shape variations in \cite{LPS19,ELP20}. We also bring up a recent work \cite{T20} where the authors studied the  insensitizing property for the fourth-order dispersive nonlinear Schr\"odinger equation with cubic nonlinearity. 
  Last but not the least, we address a very recent work \cite{Bhandari2023insensitizing} where  the insensitizing control problems for the stabilized Kuramoto-Sivashinsky system have been  analyzed. 

 
 
 
 \smallskip 
 
 In the current work, we investigate the insensitizing property of the so-called Hirota-Satsuma model, which is basically a nonlinear coupled KdV system as mentioned earlier, and to the best of our knowledge, this problem has not been addressed in the literature.    
 

\subsection{Main results}
As mentioned earlier, our goal  is  to prove the existence of control functions $(h_1, h_2)$ which insensitize the functional $J_\tau$  given by \eqref{sentinel_functional}.
In this regard, our  control result is the following.
\begin{theorem}\label{thm:main}
	Assume that $\oo \cap \omega \neq \emptyset$ and $u_0\equiv v_0\equiv 0$.  Then, there exist constants $C>0$ and $\delta>0$  
	such that for any $(\xi_1,\xi_2)\in [L^2(Q_T)]^2$ satisfying
	\begin{equation}\label{eq:cond_rho-2}
		\|e^{C/t}(\xi_1, \xi_2)\|_{[L^2(Q_T)]^2} \leq \delta ,
	\end{equation}
	one can prove the existence of  control functions $(h_1,h_2)\in [L^2((0,T)\times \omega)]^2$ which insensitize the functional $J_\tau$ in the sense of \eqref{eq:ins_satisfy}. 
\end{theorem}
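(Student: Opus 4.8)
The plan is to follow the by-now classical three-step strategy for insensitizing problems: reduce the sensitivity condition to a null-control constraint on a cascade (extended) system, establish null-controllability of the linearized extended system through a Carleman estimate and the resulting observability inequality, and finally pass to the nonlinear problem via an inverse mapping argument.

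\textbf{Step 1 (Reduction to a null-control problem).} First I would differentiate the sentinel in $\tau$. Writing $(y^u,y^v):=\partial_\tau(u,v)|_{\tau=0}$, these solve the system linearized at the trajectory $(u,v)|_{\tau=0}$, with zero source, initial data $(\widehat u_0,\widehat v_0)$, and the boundary conditions of \eqref{Control-system-main}. A direct computation gives
\begin{align*}
	\left.\frac{\partial J_\tau}{\partial\tau}\right|_{\tau=0} = \iint_{(0,T)\times\oo}\big(u\,y^u + v\,y^v\big),
\end{align*}
with $(u,v)$ the solution at $\tau=0$. I would then introduce adjoint states $(p,q)$ solving the backward adjoint KdV system with source terms $u\,\mathds 1_\oo$ and $v\,\mathds 1_\oo$ and homogeneous final data $p(T)=q(T)=0$. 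Integrating by parts in $Q_T$ (using the boundary conditions dual to those of \eqref{Control-system-main}) converts the bulk integral into a pairing at $t=0$, namely $\left.\partial_\tau J_\tau\right|_{\tau=0}=\int_0^L\big(p(0)\,\widehat u_0+q(0)\,\widehat v_0\big)$, so that \eqref{eq:ins_satisfy} holds for every normalized $(\widehat u_0,\widehat v_0)$ if and only if $p(0)=q(0)=0$ in $(0,L)$. With $u_0\equiv v_0\equiv 0$, this exhibits the insensitizing problem as null-controllability at $t=0$ of the extended nonlinear system in the four unknowns $(u,v,p,q)$, controlled only through $(h_1,h_2)\mathds 1_\omega$ in the first two equations, the coupling entering through the observation terms $u\,\mathds 1_\oo,\,v\,\mathds 1_\oo$.

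\textbf{Step 2 (Linear controllability via Carleman).} Next I would freeze the nonlinear terms $uu_x,\,vv_x,\,uv_x$ (and the analogous first-order terms in the adjoint equations) as given right-hand sides and study the linearized extended system. Its null-controllability is equivalent, by duality, to an observability inequality for the corresponding adjoint system. This is where the Carleman estimate announced in the abstract enters: I would pick a spatial weight increasing away from the overlap $\omega_0\subset\subset\oo\cap\omega$ together with the standard temporal weight blowing up at $t=0$ and $t=T$, and prove a global Carleman inequality for the coupled third-order operator, absorbing the cross terms arising from the couplings through $\oo$ by means of the overlap $\omega_0$ and of large Carleman parameters. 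Localizing and keeping the observation supported in $\omega_0\subset(0,T)\times(\oo\cap\omega)$ then yields the required observability inequality bounding the adjoint energy by a weighted integral over $(0,T)\times\omega_0$.

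\textbf{Step 3 (Weighted linear solve and nonlinear conclusion).} The observability inequality, through a standard penalized/Hilbert-uniqueness duality argument, produces for the linear extended system controls $(h_1,h_2)$ and a solution $(u,v,p,q)$ with $p(0)=q(0)=0$, living in suitable weighted spaces; the exponential weight $e^{C/t}$ in \eqref{eq:cond_rho-2} is precisely the admissibility condition on $(\xi_1,\xi_2)$ that renders the weighted source estimates finite and the solution operator bounded. I would then define a map $\mathcal N$ whose vanishing encodes the null-controlled nonlinear extended system, check that $\mathcal N$ is of class $C^1$ and that its differential at the origin is the surjective linear control operator just constructed, and apply the inverse mapping theorem (in Liusternik form) to obtain, for $\|e^{C/t}(\xi_1,\xi_2)\|\le\delta$ small, a solution of the nonlinear problem with $p(0)=q(0)=0$, i.e.\ controls insensitizing $J_\tau$.

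The main obstacle I expect is Step 2: proving a single Carleman estimate that simultaneously handles the two KdV operators, which carry opposite-sign third-order terms ($-\tfrac12 u_{xxx}$ versus $+v_{xxx}$) and different boundary conditions, and which are coupled in a forward/backward cascade through the localized observation $\mathds 1_\oo$. Designing weights compatible with all four equations and absorbing both the coupling and the transport terms $3uu_x,\,3uv_x$ into the dominant Carleman terms, while confining the observation to $\omega_0\subset\subset\oo\cap\omega$ so that both the control region $\omega$ and the sentinel region $\oo$ are respected, is the delicate technical core on which the whole result rests.
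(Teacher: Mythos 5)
Your proposal is correct and follows essentially the same route as the paper: the reduction of \eqref{eq:ins_satisfy} to the null-control constraint $p(0)=q(0)=0$ for the extended system \eqref{sys_equiv_1}--\eqref{sys_equiv_2} (the paper's \Cref{Prop-primal}), a Carleman estimate for the $4\times 4$ forward-backward adjoint system with observation of only two components localized in $\omega_0\subset\subset\oo\cap\omega$, the resulting observability inequality and weighted (HUM/Lax--Milgram) null-controllability of the linearized system, and finally the inverse mapping theorem for the nonlinear extended system, the weight $e^{C/t}$ in \eqref{eq:cond_rho-2} coming from the Carleman weight's blow-up at $t=0$. You also correctly identify the technical core, namely the coupled Carleman estimate that eliminates the observations of the forward adjoint components through the couplings supported in $\oo$.
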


To prove the above theorem, we shall equivalently establish the result given by \Cref{thm:main_extended} below.  In fact, adapting the  arguments in \cite[Proposition 1]{BF95} or \cite[Appendix]{deTZ09}, it can be proved that the insensitivity condition \eqref{eq:ins_satisfy} is equivalent to a null-control problem for an extended  system, which is in our case given by 
	\begin{align}\label{sys_equiv_1}
	&\begin{cases}
		u_t -\frac{1}{2}u_{xxx} - 3uu_x + 6vv_x = h_1 \mathds{1}_\omega + \xi_1  &  \text{in } Q_T, \\
		v_t + v_{xxx} + 3uv_x  = h_2 \mathds{1}_\omega + \xi_2  &  \text{in } Q_T, \\
			u(t,0) = u(t,L) = u_x(t,0) = 0 & \text{for } t\in (0,T), \\
		v(t,0) = v(t,L) = v_x(t,L) =0 & \text{for } t\in (0,T) , \\
		u(0) = u_0, \ \ v(0) = v_0 & \text{in } (0,L) ,
		\end{cases} \\
	\label{sys_equiv_2}
	&\begin{cases} 	
		-p_t + \frac{1}{2}p_{xxx} - 3p u_x + 3 q v_x = u \mathds{1}_{\mathcal O}  &  \text{in } Q_T, \\
		-q_t - q_{xxx} + 6pv_x =  v \mathds{1}_{\mathcal O}  &  \text{in } Q_T, \\
			p(t,0) = p(t,L) = p_x(t,L) = 0 & \text{for } t\in (0,T), \\
		q(t,0) = q(t,L) = q_x(t,0) =0 & \text{for } t\in (0,T) , \\
		p(T)=0, \ \ q(T)=0 & \text{in } (0,L),
		\end{cases}
	\end{align}
and we have the following result. 
\begin{proposition}\label{Prop-primal}
A pair of  control functions $(h_1,h_2)\in [L^2((0,T)\times \omega)]^2$ verifies the insensitivity condition \eqref{eq:ins_satisfy} for the sentinel \eqref{sentinel_functional} if and only if the associated solution to \eqref{sys_equiv_1}--\eqref{sys_equiv_2} satisfies
\begin{equation}\label{eq:null_cascade}
		(p(0),q(0))=(0,0) \quad \textnormal{in } (0,L). 
\end{equation}
\end{proposition}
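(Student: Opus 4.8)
The plan is to run the classical duality argument of Bodart--Fabre and de Teresa in the present coupled, third-order setting. First I would record the dependence of the state on $\tau$: denoting by $(u^\tau,v^\tau)$ the solution of \eqref{Control-system-main} with the given controls and sources but data $(u_0+\tau\widehat u_0,v_0+\tau\widehat v_0)$, the well-posedness and smooth dependence on the data of the KdV--KdV system ensure that $\tau\mapsto(u^\tau,v^\tau)$ is differentiable near $\tau=0$, its value at $\tau=0$ being the solution $(u,v)$ of \eqref{sys_equiv_1}. Writing $(w_1,w_2):=\left.\partial_\tau(u^\tau,v^\tau)\right|_{\tau=0}$ and differentiating \eqref{Control-system-main} in $\tau$ at $\tau=0$ shows that $(w_1,w_2)$ solves the linearized system
\begin{equation}\label{eq:lin-plan}
\begin{cases}
\partial_t w_1-\tfrac12\partial_x^3 w_1-3\partial_x(u w_1)+6\partial_x(v w_2)=0 & \text{in } Q_T,\\
\partial_t w_2+\partial_x^3 w_2+3 w_1 v_x+3u\,\partial_x w_2=0 & \text{in } Q_T,
\end{cases}
\end{equation}
together with the homogeneous boundary conditions carried over from \eqref{Control-system-main} and the initial data $(w_1(0),w_2(0))=(\widehat u_0,\widehat v_0)$. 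Differentiating the sentinel \eqref{sentinel_functional} under the integral sign then gives at once
\begin{equation}\label{eq:dJ-plan}
\left.\frac{\partial J_\tau(u,v)}{\partial\tau}\right|_{\tau=0}=\iint_{(0,T)\times\mathcal O}u\,w_1+\iint_{(0,T)\times\mathcal O}v\,w_2=\iint_{Q_T}\big(u\mathds{1}_{\mathcal O}\,w_1+v\mathds{1}_{\mathcal O}\,w_2\big).
\end{equation}

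The core of the argument is a duality identity between \eqref{eq:lin-plan} and the adjoint system \eqref{sys_equiv_2}, the latter being the formal adjoint of \eqref{eq:lin-plan} driven by the observation source $(u\mathds{1}_{\mathcal O},v\mathds{1}_{\mathcal O})$ and the terminal data $p(T)=q(T)=0$. I would multiply the two equations of \eqref{sys_equiv_2} by $w_1$ and $w_2$, integrate over $Q_T$, add, and integrate by parts so as to transfer all spatial and temporal derivatives from $(p,q)$ back onto $(w_1,w_2)$, thereby reconstructing the linearized operator of \eqref{eq:lin-plan} applied to $(w_1,w_2)$. Since $(w_1,w_2)$ solves \eqref{eq:lin-plan}, the entire interior integral vanishes and only the time-boundary terms survive, while the left-hand side produced in this way is exactly the right-hand side of \eqref{eq:dJ-plan}. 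Invoking $p(T)=q(T)=0$ and $(w_1(0),w_2(0))=(\widehat u_0,\widehat v_0)$, the surviving terms collapse to
\begin{equation}\label{eq:key-plan}
\left.\frac{\partial J_\tau(u,v)}{\partial\tau}\right|_{\tau=0}=\int_0^L\big(p(0)\,\widehat u_0+q(0)\,\widehat v_0\big)\,dx.
\end{equation}

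From \eqref{eq:key-plan} the stated equivalence is immediate. If $(p(0),q(0))=(0,0)$, then the right-hand side of \eqref{eq:key-plan} vanishes for every admissible perturbation, which is precisely \eqref{eq:ins_satisfy}. Conversely, assuming \eqref{eq:ins_satisfy}, I would exploit the freedom in the data: keeping $\widehat v_0$ fixed and replacing $\widehat u_0$ by $-\widehat u_0$, then subtracting the two resulting instances of \eqref{eq:key-plan}, forces $\int_0^L p(0)\,\widehat u_0\,dx=0$ for every unit vector $\widehat u_0\in L^2(0,L)$, whence $p(0)=0$; the remaining identity then yields $q(0)=0$ in the same manner.

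The genuinely delicate step is the boundary bookkeeping inside the integration by parts, not the interior algebra. Each third-order term $p\,\partial_x^3 w_1$ and $q\,\partial_x^3 w_2$ generates three boundary products at $x=0$ and at $x=L$, and their cancellation hinges on the complementary placement of the direct conditions $u_x(t,0)=0$, $v_x(t,L)=0$ against the adjoint conditions $p_x(t,L)=0$, $q_x(t,0)=0$; I would check endpoint by endpoint that each surviving product contains a factor that vanishes, so that no spurious spatial boundary term is left behind (this is also exactly the place where any sign in \eqref{sys_equiv_2} must be matched against \eqref{eq:lin-plan}). The remaining technical point, which I would settle by appealing to the existence--uniqueness and regularity theory for \eqref{Control-system-main}, is to justify rigorously the G\^ateaux differentiability of $\tau\mapsto(u^\tau,v^\tau)$ that underlies the passage to \eqref{eq:lin-plan} and \eqref{eq:dJ-plan}.
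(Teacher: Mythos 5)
Your strategy is the classical duality argument of \cite{BF95} and \cite{deTZ09} that the paper itself invokes for this proposition: differentiate the state of \eqref{Control-system-main} in $\tau$, identify $\left.\partial_\tau J_\tau\right|_{\tau=0}$ as $\iint_{(0,T)\times\mathcal O}(u\,w_1+v\,w_2)$, transfer this onto the initial data through the backward system, and conclude by varying $(\widehat u_0,\widehat v_0)$. Your linearized system, the formula for the derivative of the sentinel, the converse step (replacing $\widehat u_0$ by $-\widehat u_0$), and the spatial boundary bookkeeping are all correct; in particular the products $[p_xw_{1,x}]_{x=0}^{x=L}$ and $[q_xw_{2,x}]_{x=0}^{x=L}$ do vanish thanks to the complementary conditions $w_{1,x}(t,0)=p_x(t,L)=0$ and $w_{2,x}(t,L)=q_x(t,0)=0$.

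The gap is in the one step you deferred to the end: with \eqref{sys_equiv_2} as written, the interior terms do \emph{not} cancel, because \eqref{sys_equiv_2} is not the formal adjoint of your linearized system. The couplings in \eqref{sys_equiv_2} are of order zero, $-3pu_x+3qv_x$ and $+6pv_x$, so transferring all derivatives onto $(w_1,w_2)$ reconstructs only $w_{1,t}-\tfrac12 w_{1,xxx}-3u_xw_1+6v_xw_2$ and $w_{2,t}+w_{2,xxx}+3v_xw_1$, i.e.\ the zero-order part of the linearization. The transport parts of your linearized system, $-3u\,w_{1,x}+6v\,w_{2,x}$ and $+3u\,w_{2,x}$, which arise because the perturbation also hits $u_x$ and $v_x$ inside the nonlinearities, are not reproduced; their adjoints would be the terms $+3u\,p_x$ and $-6v\,p_x-3(uq)_x$, which are absent from \eqref{sys_equiv_2}. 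Carrying your integration by parts to the end therefore gives, in place of your key identity,
\begin{equation*}
\left.\frac{\partial J_\tau(u,v)}{\partial\tau}\right|_{\tau=0}
=\int_0^L\big(p(0)\,\widehat u_0+q(0)\,\widehat v_0\big)\,dx
+\iint_{Q_T}\big(3u\,p\,w_{1,x}-6v\,p\,w_{2,x}-3u\,q\,w_{2,x}\big),
\end{equation*}
and the residual double integral has no reason to vanish for general data (it disappears only in degenerate situations such as $(u,v)\equiv 0$, which is excluded here since $\xi_1,\xi_2,h_1,h_2$ are not zero). The duality argument closes only if the backward system is the true adjoint of the linearization, namely
\begin{equation*}
\begin{cases}
-p_t+\tfrac12\,p_{xxx}+3u\,p_x+3v_x\,q=u\,\mathds{1}_{\mathcal O} &\text{in } Q_T,\\
-q_t-q_{xxx}-6v\,p_x-3(uq)_x=v\,\mathds{1}_{\mathcal O} &\text{in } Q_T,
\end{cases}
\end{equation*}
with the boundary and terminal conditions of \eqref{sys_equiv_2}. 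So either you run the proof with this corrected backward system — in which case you prove the equivalence for a system different from \eqref{sys_equiv_2} as printed — or you must supply an argument for why the residual vanishes, and there is none in general. The check you postponed ("any sign in \eqref{sys_equiv_2} must be matched against the linearization") is precisely where the argument breaks; the delicate point is the interior coupling structure, not the boundary terms.
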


In what follows, we only focus on studying the controllability properties for the $4\times 4$ forward-backward system \eqref{sys_equiv_1}--\eqref{sys_equiv_2}. Indeed, we prove the following theorem which is the main result of our paper. 
\begin{theorem}[Local null-controllability of the extended system]\label{thm:main_extended}
	Assume that $\oo \cap \omega \neq \emptyset$ and $u_0\equiv v_0\equiv 0$.  Then, there exist constants $C>0$ and $\delta>0$  
	such that for any $(\xi_1,\xi_2)\in [L^2(Q_T)]^2$ verifying 
	\begin{equation}\label{eq:cond_rho}
		\|e^{C/t}(\xi_1, \xi_2)\|_{[L^2(Q_T)]^2} \leq \delta ,
	\end{equation}
	there exist control functions $(h_1,h_2)\in [L^2((0,T)\times \omega)]^2$ such that the solution $(u,v,p,q)$ to \eqref{sys_equiv_1}--\eqref{sys_equiv_2} satisfies $p(0)=q(0)=0$ in $(0,L)$.   
\end{theorem}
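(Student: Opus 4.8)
The plan is to follow the standard route for cascade/insensitizing controllability: reduce the nonlinear null-controllability of \eqref{sys_equiv_1}--\eqref{sys_equiv_2} to that of its linearization at the origin, treat the linear problem by duality through a global Carleman estimate, and recover the nonlinear statement by the inverse mapping theorem. Since $u_0\equiv v_0\equiv0$, linearizing the extended system at $(u,v,p,q)=(0,0,0,0)$ annihilates every quadratic term $uu_x,\,vv_x,\,uv_x,\,pu_x,\,qv_x,\,pv_x$, so the linearized cascade becomes
\[
\begin{cases}
u_t-\tfrac12u_{xxx}=h_1\mathds{1}_\omega+\xi_1,\\
v_t+v_{xxx}=h_2\mathds{1}_\omega+\xi_2,
\end{cases}
\qquad
\begin{cases}
-p_t+\tfrac12p_{xxx}=u\mathds{1}_\oo,\\
-q_t-q_{xxx}=v\mathds{1}_\oo,
\end{cases}
\]
retaining the boundary conditions of \eqref{sys_equiv_1}--\eqref{sys_equiv_2}, with $(u,v)(0)=(u_0,v_0)$, $(p,q)(T)=(0,0)$, and target $(p,q)(0)=(0,0)$.

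First I would recast the null-controllability of this linear cascade as the range condition of an affine map in $(h_1,h_2)$ and, integrating by parts, show it to be equivalent to an observability inequality for the adjoint forward--backward system. The adjoint is again a cascade of the two KdV operators $\pm\partial_x^3$ carrying the transposed boundary conditions and coupled through the localization $\mathds{1}_\oo$; the inequality to be established takes the schematic form
\begin{equation*}
\norm{(\text{adjoint data at } t=0)}^2\;\le\;C\iint_{(0,T)\times\omega}\bigl(|\text{first adjoint component}|^2+|\text{second adjoint component}|^2\bigr),
\end{equation*}
with the observation placed in the control set $\omega$.

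The crux is a \emph{global Carleman estimate} for the two KdV operators and their adjoints under a common weight $e^{-s\alpha}$, $\alpha(t,x)=\theta(t)\psi(x)$ with $\theta(t)=\bigl(t(T-t)\bigr)^{-1}$, where the spatial profile $\psi$ must be engineered to be simultaneously compatible with both dispersion signs $-\tfrac12\partial_x^3$ and $+\partial_x^3$ and with the \emph{asymmetric} boundary data (the extra condition $u_x(t,0)=0$ for $u$ versus $v_x(t,L)=0$ for $v$, and likewise for $p,q$). After the usual conjugation, decomposition into self- and skew-adjoint parts, and integration by parts, I expect a weighted estimate bounding all lower-order terms by the principal ones plus boundary contributions and a single interior observation on $\omega_0$. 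Here the geometric hypothesis $\omega_0\subset\subset\oo\cap\omega$ is decisive: it lets me absorb the coupling integrals $\iint u\mathds{1}_\oo(\cdots)$ and $\iint v\mathds{1}_\oo(\cdots)$ produced by the backward-to-forward feedback of the adjoint into an observation supported in $\oo\cap\omega$, which is then transferred to $\omega$. I anticipate this to be the main obstacle --- fitting two opposite-sign third-order operators under one weight while keeping the KdV boundary integrals correctly signed (the three-point conditions do not a priori make them all favorable) and simultaneously closing the cascade on $\omega_0$.

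With the Carleman estimate available, I would absorb the remaining lower-order and interior terms to obtain the observability inequality, arranging the weights so that the associated controls degenerate like $e^{-C/t}$ as $t\to0^+$; this is exactly the scaling that the hypothesis \eqref{eq:cond_rho} on $e^{C/t}(\xi_1,\xi_2)$ is designed to meet. The controls $(h_1,h_2)$ would then be produced by minimizing a penalized dual functional in the spirit of Fursikov--Imanuvilov, yielding weighted a priori bounds on $(u,v,p,q)$ in spaces of type $e^{C/t}L^2$. Finally, to restore the nonlinearity, I would introduce a map $\G$ sending weighted states and the control pair to the residuals of \eqref{sys_equiv_1}--\eqref{sys_equiv_2}, verify that $\G$ is well defined and $C^1$ between the appropriate weighted Hilbert spaces (the quadratic terms being controlled via the embeddings of the KdV regularity class), note that $D\G(0)$ is onto by the linear controllability just proved, and invoke the inverse mapping theorem to obtain, for $(\xi_1,\xi_2)$ small as in \eqref{eq:cond_rho}, controls localized in $\omega$ with $(p,q)(0)=(0,0)$, which is the assertion.
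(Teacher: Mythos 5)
Your overall architecture coincides with the paper's: linearize at the origin, prove a Carleman estimate for the forward--backward adjoint system \eqref{adj-extended-1}--\eqref{adj-extended-2}, deduce a weighted observability inequality, construct the linear controls by a Fursikov--Imanuvilov/Lax--Milgram argument with weights degenerating like $e^{-C/t}$, and conclude by the inverse mapping theorem. The gap sits exactly at the step you yourself flag as the crux. Any Carleman computation --- whether done from scratch or, as in the paper, imported from the known single-KdV estimate (which already covers both signs $\pm\partial_x^3$ and both sets of boundary conditions, so the ``weight engineering'' you worry about is not the real obstacle) --- produces local observation terms for \emph{all four} adjoint components $\eta,\psi,\zeta,\theta$ on $\omega_0$, and in fact also observations of second derivatives such as $\zeta_{xx}$. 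Your proposal never explains how the observations of $\zeta$ and $\theta$ are eliminated so that only $\eta,\psi$ remain, which is the whole point of the insensitizing problem (two observations for a $4\times 4$ system). The sentence about absorbing ``the coupling integrals $\iint u\mathds{1}_\oo(\cdots)$ and $\iint v\mathds{1}_\oo(\cdots)$'' conflates the primal couplings with the adjoint ones: in the adjoint system the couplings $\zeta\mathds{1}_\oo$, $\theta\mathds{1}_\oo$ enter the $\eta,\psi$ equations as \emph{sources}, and absorbing them against the leading Carleman terms is the easy part; it does nothing to remove the $\zeta,\theta$ observation integrals.

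What is actually needed (and what the paper does) is the following chain. On $\omega_0\subset\subset\oo\cap\omega$ one substitutes $\zeta=-\eta_t+\tfrac{1}{2}\eta_{xxx}-g_1$ from the $\eta$ equation into the observation integral of $\zeta$, inserts a cutoff, integrates by parts in $t$ and $x$, uses the $\zeta$ equation to replace $\zeta_t$, and applies Young's inequality; this trades the $\zeta$ observation for local terms in $\eta,\eta_x,\eta_{xx}$ plus quantities absorbable by the left-hand side. The local terms in $\eta_x,\eta_{xx}$ are then removed by interpolating $H^2(\omega_0)$ between $H^3(\omega_0)$ and $L^2(\omega_0)$, which is only possible because the left-hand side of the Carleman estimate was strengthened beforehand to contain a weighted $H^{2+\nu}$ (here $H^3$) norm of the solution; that strengthening is itself a separate bootstrap regularity argument, also needed to dispose of the $z_{xx}$ observation already present in the base single-KdV estimate \eqref{carlemn-gen}. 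Without this substitution--integration-by-parts--bootstrap mechanism the observability inequality with only $(\eta,\psi)$ observations cannot be reached, and the rest of your plan (linear control, inverse mapping) has nothing to run on. Incidentally, your schematic observability inequality bounding the adjoint data at $t=0$ is also not the right statement: $(\zeta,\theta)$ evolve forward from $t=0$, the Carleman weights vanish there, and what the argument both needs and yields are bounds on $\zeta(T),\theta(T)$ together with weighted trajectory norms, not on $(\zeta_0,\theta_0)$.
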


As usual,   to prove  \Cref{thm:main_extended}, one needs to  first establish a global null-controllability result for the linearized (around zero) model associated to \eqref{sys_equiv_1}--\eqref{sys_equiv_2}. More precisely, we consider the following system:
		\begin{align}
			\label{sys_linear-1}
		&\begin{cases}
			u_t -\frac{1}{2}u_{xxx}  = h_1 \mathds{1}_\omega + f_1  &  \text{in } Q_T, \\
			v_t + v_{xxx}   = h_2 \mathds{1}_\omega + f_2  &  \text{in } Q_T, 
			\\
			u(t,0) = u(t,L) = u_x(t,0) = 0 & \text{for } t\in (0,T), \\
			v(t,0) = v(t,L) = v_x(t,L) =0 & \text{for } t\in (0,T) , \\
			u(0) = u_0, \ \ v(0) = v_0 & \text{in } (0,L) ,
		\end{cases} \\
	\label{sys_linear-2}
		&\begin{cases} 	
			-p_t + \frac{1}{2}p_{xxx} =  u \mathds{1}_{\mathcal O} + f_3  &  \text{in } Q_T, \\
			-q_t - q_{xxx}  =  v \mathds{1}_{\mathcal O}  + f_4 &  \text{in } Q_T, \\
			p(t,0) = p(t,L) = p_x(t,L) = 0 & \text{for } t\in (0,T), \\
			q(t,0) = q(t,L) = q_x(t,0) =0 & \text{for } t\in (0,T) , \\
			p(T)=0, \ \ q(T)=0 & \text{in } (0,L).
		\end{cases}
	\end{align}
with given right hand sides $(f_1, f_2, f_3,f_4)$ from some certain space (specified later). 

Note that, the controls $(h_1 \mathds{1}_\omega,h_2\mathds{1}_\omega)$  act directly in the equations of  $(u,v)$  while the equations of $(p,q)$ are indirectly controlled via the couplings $(u\mathds{1}_\oo, v\mathds{1}_\oo)$. At this point, one can observe that  the condition $\oo\cap \omega\neq \emptyset$   is necessary to obtain the required 
null-controllability result for the extended system \eqref{sys_linear-1}--\eqref{sys_linear-2}, in other words, the insensitizing  property for the main system \eqref{Control-system-main}.

	
	As we know, proving the null-controllability of \eqref{sys_linear-1}--\eqref{sys_linear-2} is equivalent to determine the {\em observability property} of its adjoint system, which is   given by  
		\begin{align}\label{adj-extended-1}
		&\begin{cases}
		-\eta_t +\frac{1}{2}\eta_{xxx}  =  \zeta \mathds{1}_{\mathcal O} +  g_1  &  \text{in } Q_T, \\
			-\psi_t - \psi_{xxx}   =  \theta \mathds{1}_{\mathcal O}  +  g_2  &  \text{in } Q_T, 
			\\
			\eta(t,0) = \eta(t,L) = \eta_x(t,L) = 0 & \text{for } t\in (0,T), \\
			\psi(t,0) = \psi(t,L) = \psi_x(t,0) =0 & \text{for } t\in (0,T) , \\
			\eta(T) = 0, \ \ \psi(T) = 0 & \text{in } (0,L)  ,
		\end{cases} \\
	\label{adj-extended-2}
		&\begin{cases} 	
			\zeta_t - \frac{1}{2}\zeta_{xxx} = g_3  &  \text{in } Q_T, \\
			\theta_t + \theta_{xxx}  = g_4 &  \text{in } Q_T, \\
			\zeta(t,0) = \zeta(t,L) = \zeta_x(t,0) = 0 & \text{for } t\in (0,T), \\
			\theta(t,0) = \theta(t,L) = \theta_x(t,L) =0 & \text{for } t\in (0,T) , \\
			\zeta(0)=\zeta_0, \ \ \theta(0)=\theta_0 & \text{in } (0,L) . 
		\end{cases}
	\end{align}
with given $(\zeta_0, \theta_0)\in [L^2(0,L)]^2$ and source terms $(g_1,g_2,g_3,g_4)$ from some suitable space, specified later.  



 The problem amounts  to establish a suitable Carleman estimate satisfied by the state variables
  of the $4\times 4$ system \eqref{adj-extended-1}--\eqref{adj-extended-2} with only two observation terms, namely $\eta$ and $\psi$.  We shall discuss it at length in Section \ref{Sec-Carleman}.


\subsection*{Paper Organization} The paper is organized as follows.
 Section \ref{Sec-well} contains the well-posedness results of  the underlying  coupled KdV systems. Then, in Section \ref{Sec-Carleman} we prove a suitable Carleman estimate for the $4\times 4$ adjoint system \eqref{adj-extended-1}--\eqref{adj-extended-2}. 
	The Carleman estimate (see \Cref{Carleman-main}) then yields  an observability inequality which is obtained  in Subsection \ref{Section-obser}, precisely \Cref{Prop-obs-ineq}. In Subsection \ref{Section-null-liniear}, we establish the null-controllability of the linearized model \eqref{sys_linear-1}--\eqref{sys_linear-2}, thanks to the  appropriate observability inequality. Afterthat,  in Section \ref{Section-null-nonlinear}, 
	 we prove the local null-controllability of the system \eqref{sys_equiv_1}--\eqref{sys_equiv_2}, which is precisely the proof of \Cref{thm:main_extended}.  Finally, we conclude our paper by providing several remarks in Section \ref{Section-conclusion}.
	 


\subsection*{Notation}  
Throughout the paper, $C>0$ denotes a generic constant that may vary line to line and depend on $\oo$, $\omega$, $L$ and $T$. 


 

\section{Well-posedness results}\label{Sec-well}

\subsection{Functional setting and well-posedness of single KdV equation}
We start by introducing the following functional spaces:
\begin{equation}\label{spaces-X} 
\begin{aligned}
	& X_0 := L^2(0,T; H^{-2}(0,L)) , \quad  \ X_1 := L^2(0,T; H^2_0(0,L)) , \\
	& \widetilde X_0 := L^1(0,T; H^{-1}(0,L)) , \quad \  \widetilde X_1 := L^1(0,T; H^3(0,L) \cap  H^2_0(0,L)) , \\
\end{aligned}
\end{equation}
and 
\begin{equation}\label{spaces-Y} 
	\begin{aligned}
&Y_0 := L^2(0,T; L^{2}(0,L)) \cap \C^0([0,T]; H^{-1}(0,L)), \\ 
&Y_1:= L^2(0,T; H^{4}(0,L)) \cap \C^0([0,T]; H^3(0,L) ) ,
	\end{aligned}
\end{equation}
which are equipped with their usual norms. For each $\mu \in [0,1]$, we further define the interpolation spaces (see for instance \cite{Bergh-Interpolation,Lions-Magenes}):
\begin{equation}\label{interpolation-spaces} 
	X_\mu : = (X_0, X_1)_{[\mu]} , \quad  \  	\widetilde X_\mu : = (\widetilde X_0, \widetilde X_1)_{[\mu]}  , \quad \
   Y_{\mu} : = (Y_0, Y_1)_{[\mu]}   .
\end{equation}
In particular, we have
\begin{align}
	\label{spaces-1/4}
&\begin{dcases}
 X_{\frac{1}{4}} = L^2(0,T; H^{-1}(0,L)) , \quad \widetilde X_{\frac{1}{4}} = L^1(0,T; L^2(0,L)) , \\
 Y_{\frac{1}{4}}	 =  L^2(0,T; H^1(0,L)) \cap \C^0([0,T]; L^2(0,L) ) ,
\end{dcases} \\
\label{spaces-1/2}
&\begin{dcases}
	 X_{\frac{1}{2}} = L^2(0,T; L^2(0,L)) , \quad \widetilde X_{\frac{1}{2}} = L^1(0,T; H^1_0(0,L)) , \\
	Y_{\frac{1}{2}}	 =  L^2(0,T; H^2(0,L)) \cap \C^0([0,T]; H^1(0,L) ) .
\end{dcases} 
\end{align}
Also, one can observe   that for any $\nu \in (0,1]$,
\begin{align} 
\label{spaces-nu/4}
\begin{dcases}
	 X_{\frac{1}{2} + \frac{\nu}{4}} = L^2(0,T; H^\nu(0,L)) , \quad \widetilde X_{\frac{1}{2} + \frac{\nu}{4}} = L^1(0,T; H^{1+\nu}(0,L)) , \\
	Y_{\frac{1}{2} + \frac{\nu}{4}}	 =  L^2(0,T; H^{2+\nu}(0,L)) \cap \C^0([0,T]; H^{1+\nu}(0,L) ) . 
\end{dcases}
\end{align}

Let us now consider the single KdV equation given by 
\begin{align}\label{single-KdV}
	\begin{cases} 
	y_t \pm  y_{xxx} = f & \text{in } Q_T, \\
	y(t,0) = y(t,L) = y_x(t,L) = 0 & \text{for } (0,T) , \\
	y(0,x) = y_0(x) & \text{in } (0,L) .
	\end{cases}
\end{align}
with given source term $f$ and initial data $y_0$. 

We recall the following known results for \eqref{single-KdV}.  

\begin{lemma}[{\cite[Section 2.2.2]{Glass-Guerrero}}]\label{Prop-1-well}
	For given $y_0\in L^2(0,L)$ and $f\in F$ with   $F=X_{\frac{1}{4}}$ or $\widetilde X_{\frac{1}{4}}$, the system \eqref{single-KdV} admits a unique solution $y\in Y_{\frac{1}{4}}$, and in addition, there exists a constant $C>0$ such that
	\begin{align}
		\|y\|_{Y_{\frac{1}{4}}} \leq C \left( \|y_0\|_{L^2(0,L)} + \|f \|_{F} \right) .
	\end{align}  
\end{lemma}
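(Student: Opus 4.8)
The plan is to treat this as a standard linear well-posedness statement for the KdV operator, combining semigroup generation with the Kato-type smoothing effect and then handling the two admissible source classes separately. First I would set up the underlying semigroup: writing $Ay = \mp y_{xxx}$ with domain $D(A)=\{y\in H^3(0,L): y(0)=y(L)=y_x(L)=0\}$ for the relevant sign, I would verify via the Lumer--Phillips theorem that $A$ generates a $C_0$-semigroup of contractions $(S(t))_{t\ge 0}$ on $L^2(0,L)$. Concretely, multiplying $Ay$ by $y$ and integrating by parts, the boundary conditions annihilate all boundary terms except one of favorable sign, giving $\langle Ay,y\rangle\le 0$; the same computation for the formal adjoint shows that $A^*$ is dissipative as well, which together with closedness and density of $D(A)$ yields contractivity. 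This already provides existence, uniqueness and the bound $\|y\|_{\C^0([0,T];L^2(0,L))}\le \|y_0\|_{L^2(0,L)}$ for the homogeneous problem.

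Next I would establish the smoothing (Kato) effect, which is what supplies the extra spatial derivative encoded in $Y_{\frac14}$. Multiplying the homogeneous equation by $x\,y$ and integrating over $(0,L)$, the boundary conditions again eliminate the boundary contributions and produce an identity of the form $\tfrac12\tfrac{d}{dt}\int_0^L x\,y^2\,dx + c\int_0^L y_x^2\,dx = 0$ with $c>0$. Integrating in time gives $\int_0^T\!\!\int_0^L y_x^2\,dx\,dt\le C\|y_0\|_{L^2(0,L)}^2$, i.e. $y\in L^2(0,T;H^1(0,L))$ with the corresponding bound. Combined with the previous step, this yields the homogeneous estimate $\|y\|_{Y_{\frac14}}\le C\|y_0\|_{L^2(0,L)}$.

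For the inhomogeneous problem I would split according to the class of $f$. When $f\in\widetilde X_{\frac14}=L^1(0,T;L^2(0,L))$, I would represent the solution by Duhamel's formula $y(t)=S(t)y_0+\int_0^t S(t-s)f(s)\,ds$: the contraction property gives the $\C^0([0,T];L^2(0,L))$ bound directly, while applying the smoothing estimate to each $S(t-s)f(s)$ and using Minkowski's inequality in the $L^1_t$ variable gives the $L^2(0,T;H^1(0,L))$ bound, hence $\|y\|_{Y_{\frac14}}\le C(\|y_0\|_{L^2(0,L)}+\|f\|_{\widetilde X_{\frac14}})$.

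The delicate case is $f\in X_{\frac14}=L^2(0,T;H^{-1}(0,L))$, which I expect to be the main obstacle, since $f$ is then too rough in space for the Duhamel term to be estimated directly. Here I would instead define the solution by transposition, testing against solutions of the backward (adjoint) KdV problem. The adjoint states enjoy precisely the smoothing regularity $L^2(0,T;H^1(0,L))$ proved above, so the duality pairing of $f\in L^2(0,T;H^{-1}(0,L))$ against them is well-defined and controlled by $\|f\|_{X_{\frac14}}$; the resulting bounded linear functional identifies a unique $y\in Y_{\frac14}$ satisfying the claimed estimate. Uniqueness in all cases follows from the homogeneous $Y_{\frac14}$ estimate applied to the difference of two solutions.
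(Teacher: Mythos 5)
Your semigroup construction (Lumer--Phillips for $A=\mp\partial_x^3$ with the stated domain, using dissipativity of both $A$ and $A^*$), the Kato smoothing identity obtained from the multiplier $x\,y$ (or $(L-x)\,y$ for the other sign/boundary combination), and the treatment of $f\in\widetilde X_{\frac14}=L^1(0,T;L^2(0,L))$ by Duhamel's formula plus Minkowski's integral inequality are all correct; this is the standard route and matches the reference \cite{Glass-Guerrero} that the paper cites for this lemma (the paper itself gives no proof of it).

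The genuine gap is in the case $f\in X_{\frac14}=L^2(0,T;H^{-1}(0,L))$, precisely the case you flagged as delicate: your transposition argument is circular. By duality you recover exactly the regularity dual to the class of adjoint sources you are able to test against. Testing against adjoint problems with sources in $L^2(Q_T)$ or in $L^1(0,T;L^2(0,L))$ --- the only ones controlled by your earlier steps --- identifies $y$ only as an element of $L^2(Q_T)$, respectively $L^\infty(0,T;L^2(0,L))$; it does not produce the $L^2(0,T;H^1(0,L))$ half of the $Y_{\frac14}$ norm. To obtain that component by transposition you would need to test against adjoint problems whose source lies in $L^2(0,T;H^{-1}(0,L))$, and the bound required for those adjoint solutions, namely $\|\varphi\|_{L^2(0,T;H^1_0(0,L))}\le C\|g\|_{L^2(0,T;H^{-1}(0,L))}$, is exactly the statement of the lemma for the time-reversed equation, which is an equation of the identical type; the duality step thus assumes what it is meant to prove. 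The standard repair is a direct argument: decompose $f=f_0+F_x$ with $f_0,F\in L^2(Q_T)$ and $\|f_0\|_{L^2(Q_T)}+\|F\|_{L^2(Q_T)}\le C\|f\|_{X_{\frac14}}$, run the same two multiplier computations (against $y$ and against $x\,y$) on smooth solutions, integrate the source terms by parts using $y(t,0)=y(t,L)=0$ so that only terms of the type $\|F\|_{L^2}\|y_x\|_{L^2}$ appear, absorb these by Young's inequality into the smoothing term, and conclude by Gr\"onwall's lemma and density. Alternatively, one can interpolate between the endpoint mappings $X_0\to Y_0$ and $X_1\to Y_1$, which is exactly what the scales \eqref{interpolation-spaces} in the paper are set up for.
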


\begin{lemma}[{\cite[Section 2.3.1]{Glass-Guerrero}}]\label{Prop-2-well}
For given $y_0\in H^3(0,L)$  with  $y_0(0)=y_0(L)=y_0^\prime(L)=0$, and $f\in F$ with   $F=X_{1}$ or $\widetilde X_{1}$, the system \eqref{single-KdV} admits a unique solution $y\in Y_{1}$. In addition, there exists a constant $C>0$ such that
\begin{align}
	\|y\|_{Y_{1}} \leq C \left( \|y_0\|_{H^3(0,L)} + \|f \|_{F} \right) .
\end{align}  
\end{lemma}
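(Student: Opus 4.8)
The plan is to treat \eqref{single-KdV} by semigroup theory combined with the Kato smoothing effect, and then to assemble the $Y_1$-estimate by differentiating the equation in time so as to fall back on the $L^2$-level theory of \Cref{Prop-1-well}.

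First I would set up the generator. Fixing the sign for which the boundary conditions $y(t,0)=y(t,L)=y_x(t,L)=0$ are dissipative (the opposite sign is handled by the reflection $x\mapsto L-x$), define $Ay=\mp y_{xxx}$ on the domain $D(A)=\{y\in H^3(0,L):y(0)=y(L)=y_x(L)=0\}$. Integration by parts gives $\ip{Ay}{y}=-\tfrac12\,y_x(0)^2\le 0$, and the same computation for the formal adjoint $A^\ast=\pm\partial_x^3$ equipped with the reflected boundary conditions $\phi(0)=\phi(L)=\phi_x(0)=0$ yields $\ip{A^\ast\phi}{\phi}=-\tfrac12\,\phi_x(L)^2\le 0$; by Lumer--Phillips, $A$ generates a $\C^0$-semigroup of contractions $(S(t))_{t\ge 0}$ on $L^2(0,L)$. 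This gives uniqueness and the mild-solution representation $y(t)=S(t)y_0+\int_0^t S(t-s)f(s)\,ds$. Observe that $D(A)$ is precisely the space of $H^3$-data satisfying the three compatibility conditions in the statement.

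Next I would upgrade the regularity. Since the boundary conditions are linear and time-independent, $w:=y_t$ solves the same equation \eqref{single-KdV} with source $f_t$ and initial datum $w(0)=f(0)\mp y_{0,xxx}\in L^2(0,L)$ (for a general source one argues first on smooth $f$ and passes to the limit by density, using the a priori bound). Applying \Cref{Prop-1-well} to $w$ gives $y_t\in Y_{\frac14}=L^2(0,T;H^1(0,L))\cap\C^0([0,T];L^2(0,L))$. Reinserting this into the equation $\pm y_{xxx}=f-y_t$ and solving, for each fixed $t$, the resulting third-order ODE in $x$ with the three boundary conditions (elliptic regularity for this one-dimensional problem), I recover $y\in L^2(0,T;H^4(0,L))$ from $f\in X_1$ and $y_t\in L^2(0,T;H^1(0,L))$; the continuity $y\in\C^0([0,T];H^3(0,L))$ I would instead read off the semigroup, since $S(\cdot)y_0\in\C^0([0,T];D(A))\subset\C^0([0,T];H^3(0,L))$ and the Duhamel term is estimated directly in $\C^0([0,T];H^3)\cap L^2(0,T;H^4)$ via the smoothing bounds for $S$. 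The two admissible source classes are handled by the two available estimates: for $f\in X_1=L^2(0,T;H^2_0(0,L))$ one uses multiplier (energy) estimates, multiplying the equation and its derivatives by $y$ and by $(L-x)y$, the second multiplier producing the boundary trace that encodes the gain of one derivative; for $f\in\widetilde X_1=L^1(0,T;H^3\cap H^2_0)$, where only $L^1$-in-time integrability is available, I would use the Duhamel representation together with the Kato smoothing estimate $\norm{S(\cdot)g}_{L^2(0,T;H^{s+1})}\le C\norm{g}_{H^s}$ and Minkowski's inequality in time. A density argument (smooth, compatible data being dense) then promotes the a priori estimates to the existence statement, yielding the claimed inequality with $C=C(T,L)$.

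The main obstacle is the sharp smoothing: producing the full gain to $L^2(0,T;H^4)$ and $\C^0([0,T];H^3)$ rather than the mere propagation $\C^0([0,T];H^3)$ that the semigroup supplies for free. This rests on the Kato boundary-trace estimate, whose delicate point on a bounded interval is the bookkeeping of boundary terms when one differentiates in $x$ (which does \emph{not} preserve the boundary conditions); I sidestep this by differentiating in $t$ instead and recovering spatial regularity from the equation. The secondary difficulty is the $\widetilde X_1$-source, where the $L^1$-in-time integrability rules out squared-in-time energy methods and forces the Duhamel-plus-smoothing route described above.
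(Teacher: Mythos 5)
A preliminary remark: the paper offers no proof of this lemma at all --- it is imported verbatim from \cite[Section 2.3.1]{Glass-Guerrero} --- so your argument must stand entirely on its own. It does not, because of the case $F=X_1=L^2(0,T;H^2_0(0,L))$. Your central device --- setting $w:=y_t$, which solves \eqref{single-KdV} with source $f_t$ and datum $w(0)=f(0)\mp y_{0,xxx}$, and applying \Cref{Prop-1-well} to $w$ --- consumes one derivative of $f$ \emph{in time}, whereas $X_1$ provides regularity of $f$ only \emph{in space}: for a generic $f\in X_1$ neither $f(0)$ nor $f_t$ makes sense, and neither $\|f(0)\|_{L^2}$ nor $\|f_t\|_{X_{\frac14}}$ is controlled by $\|f\|_{X_1}$. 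The density argument you invoke cannot repair this: a density argument upgrades an a priori estimate only when its right-hand side is the norm in which one approximates, while your argument yields, for smooth compatible $f$, the bound $\|y\|_{Y_1}\leq C\big(\|y_0\|_{H^3}+\|f(0)\|_{L^2}+\|f_t\|_{X_{\frac14}}\big)$, whose last two terms are in general unbounded along a sequence of smooth $f^n\to f$ in $X_1$. So no estimate of the form $\|y\|_{Y_1}\leq C\big(\|y_0\|_{H^3}+\|f\|_{X_1}\big)$ is ever produced, and nothing passes to the limit.

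Your fallback routes do not fill this hole. The multiplier route announced for $X_1$ (``multiplying the equation and its derivatives'') is exactly the boundary-term bookkeeping that you then declare you will ``sidestep by differentiating in $t$ instead'': you cannot both invoke it and avoid it, and indeed $\partial_x$ does not preserve the boundary conditions, so as stated this route is circular rather than a proof. The Duhamel--Kato--Minkowski route is correct, but only for $F=\widetilde X_1$: there $f(s)\in H^3\cap H^2_0\subset D(A)$, so $A$ commutes through $S(t-s)$ and $\|S(\cdot)g\|_{L^2(0,T;H^4)}+\|S(\cdot)g\|_{\C^0([0,T];H^3)}\leq C\|g\|_{H^3}$ is legitimate. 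For $f(s)\in H^2_0\not\subset D(A)$ no such commutation is available (in particular, your claim that the Duhamel term is ``estimated directly in $\C^0([0,T];H^3)$ via the smoothing bounds for $S$'' breaks down: there is no fixed-time smoothing $\|S(\tau)g\|_{H^3}\leq C\|g\|_{H^2}$, the Kato gain being only in time average), and, more fundamentally, Minkowski exploits only $L^1$-in-time integrability, so it can give at best a one-derivative gain, $y\in L^2(0,T;H^3)$, whereas $Y_1$ demands the two-derivative gain $L^2(0,T;H^4)$ from an $L^2$-in-time source. That stronger gain rests on a different mechanism --- duality against the Kato smoothing of the adjoint semigroup (this is where the vanishing of $f$ \emph{and} $f_x$ at both endpoints, i.e.\ $H^2_0$ rather than $H^2$, is actually used), or equivalently the nontrivial interpolation identities underlying the scales \eqref{spaces-X} --- and this is precisely the content of the Glass--Guerrero argument that your sketch does not reproduce. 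A minor point in the same spirit: the reflection $x\mapsto L-x$ maps the minus-sign equation with $y_x(t,L)=0$ to the plus-sign equation with $y_x(t,0)=0$, which is still \emph{not} dissipative; the sign and the boundary conditions must be paired (minus sign with $y_x(t,0)=0$, plus sign with $y_x(t,L)=0$), which is how the lemma is meant to be read and is used elsewhere in the paper.
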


\begin{lemma}[{\cite[Section 2.3.2]{Glass-Guerrero}}]\label{Prop-3-well}
	Let $y_0\equiv 0$ and $\mu\in [1/4,1]$. Then, for given $f\in F$ with   $F=X_{\mu}$ or $\widetilde X_{\mu}$, the system \eqref{single-KdV} admits a unique solution $y\in Y_{\mu}$, and  moreover, there exists a constant $C>0$ such that
	\begin{align}
		\|y\|_{Y_{\mu}} \leq C \|f \|_{F}  .
	\end{align}  
\end{lemma}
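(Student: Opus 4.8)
The plan is to exploit the fact that the initial datum is homogeneous, $y_0\equiv 0$, so that the solution map $f\mapsto y$ of \eqref{single-KdV} is \emph{linear}, and then to obtain the intermediate regularity by complex interpolation between the two endpoint estimates already at our disposal at $\mu=\tfrac14$ and $\mu=1$. Concretely, I would fix one of the two scales ($F=X_\bullet$ or $F=\widetilde X_\bullet$) and denote by $S$ the operator that to a source $f$ associates the solution $y$ of \eqref{single-KdV} with $y_0\equiv 0$. Since $y_0\equiv 0$ trivially satisfies the compatibility conditions $y_0(0)=y_0(L)=y_0'(L)=0$ required in \Cref{Prop-2-well}, both endpoint results apply with vanishing initial data.

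From \Cref{Prop-1-well} (with $y_0\equiv 0$) the operator $S$ is bounded from $X_{1/4}$ (resp. $\widetilde X_{1/4}$) into $Y_{1/4}$, and from \Cref{Prop-2-well} (again with $y_0\equiv 0$) it is bounded from $X_{1}$ (resp. $\widetilde X_1$) into $Y_1$. These two mappings are compatible: on the intersection $X_1\subset X_{1/4}$ they produce the same $y$, by the uniqueness asserted in \Cref{Prop-1-well}. Hence $S$ is a well-defined bounded linear operator on each of the two endpoint spaces simultaneously.

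Next, I would use the reiteration theorem for the complex method (see \cite{Bergh-Interpolation,Lions-Magenes}) to recognize the intermediate spaces as interpolants of the endpoints: for $\theta:=\tfrac{4\mu-1}{3}\in[0,1]$ one has
\[
\big(X_{1/4},X_{1}\big)_{[\theta]}=X_{\mu},\qquad \big(\widetilde X_{1/4},\widetilde X_{1}\big)_{[\theta]}=\widetilde X_{\mu},\qquad \big(Y_{1/4},Y_{1}\big)_{[\theta]}=Y_{\mu},
\]
since $X_\mu=(X_0,X_1)_{[\mu]}$, etc., and $\tfrac14(1-\theta)+1\cdot\theta=\mu$. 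The complex-interpolation theorem for bounded linear operators then yields that $S\colon X_\mu\to Y_\mu$ (resp. $\widetilde X_\mu\to Y_\mu$) is bounded with norm controlled by $M_0^{1-\theta}M_1^{\theta}$, where $M_0,M_1$ are the endpoint operator norms. This is precisely the desired existence together with the estimate $\norm{y}_{Y_\mu}\le C\norm{f}_{F}$. Uniqueness is inherited from the base level: since $\mu\ge \tfrac14$ and $Y_1\hookrightarrow Y_0$, we have the continuous embedding $Y_\mu\hookrightarrow Y_{1/4}$, so any $Y_\mu$-solution is in particular a $Y_{1/4}$-solution and must coincide with the one furnished by \Cref{Prop-1-well}.

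I expect the only genuinely delicate point to be the reiteration identity and the consistency of the two endpoint operators on $X_1\cap X_{1/4}=X_1$: one must verify that, with the convention $X_\mu=(X_0,X_1)_{[\mu]}$, the couple $(X_{1/4},X_1)_{[\theta]}$ really collapses to $X_\mu$, which is the content of the reiteration theorem for the complex method and relies on $X_1\hookrightarrow X_0$ (and likewise for $\widetilde X$, $Y$) together with the density of $X_1$ in the interpolation couple. Everything else is the soft machinery of interpolating a single linear operator, and the argument is identical for the two scales $X_\bullet$ and $\widetilde X_\bullet$, since both endpoints are available in either scale.
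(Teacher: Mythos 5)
Your proposal is correct and is essentially the intended argument: the paper does not reprove this lemma but cites \cite[Section 2.3.2]{Glass-Guerrero}, where the result is obtained exactly as you describe — exploiting linearity of the solution map for $y_0\equiv 0$ and interpolating between the endpoint bounds of \Cref{Prop-1-well} and \Cref{Prop-2-well}, which is precisely why the scales $X_\mu$, $\widetilde X_\mu$, $Y_\mu$ are defined by the complex interpolation functor in \eqref{interpolation-spaces}. Your identification of the reiteration identity and the consistency of the two endpoint operators as the only delicate points is accurate, and both are standard for these nested couples.
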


Note that the above results are also applicable for the adjoint equation  to \eqref{single-KdV} which is backward in time.

\subsection{Well-posedness of the $4\times 4$ linearized system and its adjoint}

We state the following results. 

\begin{proposition}\label{proposition-linear-well}
	Let $(u_0, v_0)\in [L^2(0,L)]^2$, $(h_1, h_2) \in [L^2((0,T)\times \omega)]^2$ and $(f_1, f_2, f_3,f_4)\in [F]^4$ with $F=X_{\frac{1}{4}}$ or $\widetilde X_{\frac{1}{4}}$ be given. Then, the system \eqref{sys_linear-1}--\eqref{sys_linear-2} possesses a unique solution $(u,v,p,q)\in \big[Y_{\frac{1}{4}}\big]^4$. In addition, there exists a constant $C>0$ such that 
	\begin{equation}\label{estimate-linear}
		\begin{aligned}
			\|(u,v,p,q)\|_{[Y_{\frac{1}{4}}]^4} \leq C \Big( \|(u_0,v_0)\|_{[L^2(0,L)]^2} + \|(h_1,h_2)  \|_{[L^2((0,T)\times \omega)]^2} + \|(f_1,f_2,f_3,f_4)  \|_{[F]^4}      \Big) ,
		\end{aligned}
	\end{equation}     
where $Y_{\frac{1}{4}}$ is defined by \eqref{spaces-1/4}. 
\end{proposition}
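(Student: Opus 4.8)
The plan is to exploit the \emph{cascade} (triangular) structure of the system: equations \eqref{sys_linear-1} for $(u,v)$ do not involve $(p,q)$, so we may solve them first and then treat the coupling terms $u\mathds{1}_\oo$ and $v\mathds{1}_\oo$ as \emph{data} for the backward pair \eqref{sys_linear-2}. Each of the four scalar equations is a single KdV equation of the form \eqref{single-KdV}, up to the value of the dispersion coefficient and the location of the first-order boundary trace; a reflection $x\mapsto L-x$ (which moves the trace condition between $x=0$ and $x=L$ and flips the sign of $y_{xxx}$) together with a rescaling of the spatial variable reduces every one of them to the normalized form covered by \Cref{Prop-1-well}. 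For the backward equations of $(p,q)$ one first performs the time reversal $t\mapsto T-t$; the resulting forward equations are then handled as above, which is legitimate since the single-KdV results also cover the time-reversed (adjoint) equation, as noted right after \Cref{Prop-3-well}. All constants generated by these reductions depend only on $L$ and $T$ and are absorbed into the generic constant $C$.

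First I would solve the forward subsystem. Since $(h_1,h_2)\in[L^2((0,T)\times\omega)]^2$, the localized controls satisfy $h_i\mathds{1}_\omega\in L^2(Q_T)$, and $L^2(Q_T)=X_{\frac12}\hookrightarrow X_{\frac14}$ as well as $L^2(Q_T)\hookrightarrow\widetilde X_{\frac14}$ (the latter by Cauchy--Schwarz in time, $T<\infty$); hence the right-hand sides $h_i\mathds{1}_\omega+f_i$ lie in $F$ for either choice $F=X_{\frac14}$ or $\widetilde X_{\frac14}$. Applying \Cref{Prop-1-well} to the $u$- and $v$-equations with $u_0,v_0\in L^2(0,L)$ yields a unique pair $(u,v)\in[Y_{\frac14}]^2$ together with
\[
\|(u,v)\|_{[Y_{\frac14}]^2}\le C\Big(\|(u_0,v_0)\|_{[L^2(0,L)]^2}+\|(h_1,h_2)\|_{[L^2((0,T)\times\omega)]^2}+\|(f_1,f_2)\|_{[F]^2}\Big).
\]

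Next, the observation that makes the cascade close is the embedding $Y_{\frac14}=L^2(0,T;H^1(0,L))\cap\C^0([0,T];L^2(0,L))\hookrightarrow L^2(Q_T)$. Thus $u,v\in L^2(Q_T)$, so the coupling terms $u\mathds{1}_\oo,\,v\mathds{1}_\oo\in L^2(Q_T)$ and, by the same embeddings as above, they belong to $F$, with $\|u\mathds{1}_\oo\|_F\le C\|u\|_{L^2(Q_T)}\le C\|u\|_{Y_{\frac14}}$ (and likewise for $v$). Consequently the right-hand sides $u\mathds{1}_\oo+f_3$ and $v\mathds{1}_\oo+f_4$ of \eqref{sys_linear-2} lie in $F$. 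Applying the time-reversed version of \Cref{Prop-1-well} to the $p$- and $q$-equations, which carry the homogeneous terminal data $p(T)=q(T)=0$, gives a unique pair $(p,q)\in[Y_{\frac14}]^2$ with
\[
\|(p,q)\|_{[Y_{\frac14}]^2}\le C\Big(\|(u,v)\|_{[Y_{\frac14}]^2}+\|(f_3,f_4)\|_{[F]^2}\Big).
\]
Chaining the two estimates produces \eqref{estimate-linear}, and uniqueness for the full system follows from the uniqueness in \Cref{Prop-1-well} applied successively along the cascade: by linearity, vanishing data force $(u,v)\equiv0$, whence the sources of the backward pair vanish and $(p,q)\equiv0$.

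I expect no genuine obstacle beyond this bookkeeping. The one point requiring care is the self-consistency of the regularity indices: one must check that the output space $Y_{\frac14}$ of the forward solve embeds into a space of admissible source terms ($L^2(Q_T)\subset F$) for the backward solve, so that the coupling does not force a loss of regularity and the cascade closes at the single level $\tfrac14$. A secondary, purely technical, matter is the reduction of the four sign/coefficient/boundary variants to the normalized equation \eqref{single-KdV} via reflection, rescaling, and (for $(p,q)$) time reversal; this is routine and affects only the constants.
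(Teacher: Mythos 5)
Your proposal is correct and follows essentially the same route as the paper's own (much terser) proof: exploit the cascade structure, solve the forward $(u,v)$ subsystem via \Cref{Prop-1-well}, then feed $(u\mathds{1}_\oo, v\mathds{1}_\oo)$ as source terms into the backward $(p,q)$ subsystem and chain the two estimates. The details you supply (the embeddings $L^2(Q_T)\hookrightarrow F$ and $Y_{\frac14}\hookrightarrow L^2(Q_T)$, the reflection/time-reversal reductions, and the uniqueness argument) are exactly the bookkeeping the paper leaves implicit.
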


\begin{proof} 
The proof of above proposition can be made in the following way. First, we use \Cref{Prop-1-well} to the set of equations \eqref{sys_linear-1} to  show  that $(u,v)\in \big[Y_{\frac{1}{4}} \big]^2$ along with the estimate 
	\begin{equation}\label{esti-u-v}
	\begin{aligned}
		\|(u,v)\|_{[Y_{\frac{1}{4}}]^2} \leq C \Big( \|(u_0,v_0)\|_{[L^2(0,L)]^2} + \|(h_1,h_2)  \|_{[L^2((0,T)\times \omega)]^2} + \|(f_1,f_2)  \|_{[F]^2}      \Big) .
	\end{aligned}
\end{equation} 
Then, using $(u\mathds{1}_\oo, v\mathds{1}_\oo)$ as  source terms in the equations of $(p,q)$ given by \eqref{sys_linear-2}, and combining with \eqref{esti-u-v} we get the required estimate \eqref{estimate-linear}.
\end{proof}

Similar result holds for the adjoint system \eqref{adj-extended-1}--\eqref{adj-extended-2}. 
\begin{proposition}\label{proposition-linear-well-adj}
	Let $(\zeta_0, \theta_0)\in [L^2(0,L)]^2$
	 and $(g_1, g_2, g_3,g_4) \in [F]^4$ with $F=X_{\frac{1}{4}}$ or $\widetilde X_{\frac{1}{4}}$ be given. Then, the system \eqref{adj-extended-1}--\eqref{adj-extended-2} admits a unique solution $(\eta,\psi,\zeta,\theta)\in \big[Y_{\frac{1}{4}}\big]^4$ and moreover,  there exists a constant $C>0$ such that 
	\begin{equation}\label{estimate-linear-adj}
		\begin{aligned}
			\|(\eta,\psi,\zeta,\theta)\|_{[Y_{\frac{1}{4}}]^4} \leq C \Big( \|(\zeta_0,\theta_0)\|_{[L^2(0,L)]^2} +  \|(g_1,g_2,g_3,g_4)  \|_{[F]^4}      \Big) .
		\end{aligned}
	\end{equation}     
\end{proposition}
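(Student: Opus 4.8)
The plan is to exploit the triangular (cascade) structure of \eqref{adj-extended-1}--\eqref{adj-extended-2}, mirroring the proof of \Cref{proposition-linear-well} but solving the two subsystems in the reverse order. Observe that the pair $(\zeta,\theta)$ in \eqref{adj-extended-2} is completely decoupled from $(\eta,\psi)$: each of these is a single forward-in-time KdV equation with source $g_3$, resp.\ $g_4$, and initial datum $\zeta_0$, resp.\ $\theta_0$. After the reflection $x\mapsto L-x$ (which turns the boundary condition $\zeta_x(t,0)=0$ into one at $x=L$ and flips the sign of the dispersive term) together with an innocuous rescaling absorbing the factor $1/2$, both equations fall into the canonical form \eqref{single-KdV}. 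Hence \Cref{Prop-1-well} applies and yields $(\zeta,\theta)\in[Y_{\frac14}]^2$ with
\[
\|(\zeta,\theta)\|_{[Y_{\frac14}]^2}\le C\big(\|(\zeta_0,\theta_0)\|_{[L^2(0,L)]^2}+\|(g_3,g_4)\|_{[F]^2}\big).
\]

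The key—and essentially the only non-automatic—step is to check that the coupling terms feeding the $(\eta,\psi)$ block lie in the admissible source space $F$. Since $Y_{\frac14}=L^2(0,T;H^1(0,L))\cap \C^0([0,T];L^2(0,L))$ embeds into $L^2(Q_T)=X_{\frac12}$, and since $X_{\frac12}$ embeds continuously both into $X_{\frac14}=L^2(0,T;H^{-1}(0,L))$ (as $L^2\hookrightarrow H^{-1}$) and into $\widetilde X_{\frac14}=L^1(0,T;L^2(0,L))$ (by H\"older in time on the bounded interval $(0,T)$), multiplication by the indicator $\mathds{1}_{\oo}$ gives
\[
\|\zeta\mathds{1}_{\oo}\|_{F}+\|\theta\mathds{1}_{\oo}\|_{F}\le C\,\|(\zeta,\theta)\|_{[Y_{\frac14}]^2}
\]
for either choice $F=X_{\frac14}$ or $F=\widetilde X_{\frac14}$. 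Consequently $\zeta\mathds{1}_{\oo}+g_1$ and $\theta\mathds{1}_{\oo}+g_2$ belong to $F$ with controlled norm.

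With the sources in hand, I would then treat the $(\eta,\psi)$ block in \eqref{adj-extended-1}. These are backward-in-time KdV equations with zero terminal data $\eta(T)=\psi(T)=0$; reversing time via $s=T-t$ and, for $\psi$, again reflecting in $x$, reduces each to \eqref{single-KdV}. The remark following \Cref{Prop-3-well} guarantees that \Cref{Prop-1-well} remains applicable to these backward equations, producing $(\eta,\psi)\in[Y_{\frac14}]^2$ together with
\[
\|(\eta,\psi)\|_{[Y_{\frac14}]^2}\le C\big(\|\zeta\mathds{1}_{\oo}+g_1\|_{F}+\|\theta\mathds{1}_{\oo}+g_2\|_{F}\big).
\]
Chaining this with the estimate for $(\zeta,\theta)$ and the bound for the coupling terms yields the desired estimate \eqref{estimate-linear-adj}. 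Uniqueness is inherited from the uniqueness part of \Cref{Prop-1-well} applied to each scalar equation along the cascade. I do not anticipate a genuine difficulty: the argument is a routine cascade, and the only point demanding an explicit line of justification is the embedding $Y_{\frac14}\hookrightarrow F$ used to place the coupling terms $\zeta\mathds{1}_{\oo}$, $\theta\mathds{1}_{\oo}$ into an admissible source space.
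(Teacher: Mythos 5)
Your proof is correct and matches the paper's (implicit) argument: the paper states this proposition without a separate proof, as the direct analogue of \Cref{proposition-linear-well}, whose proof is exactly your cascade run in the appropriate order --- here solving the decoupled $(\zeta,\theta)$ block of \eqref{adj-extended-2} first via \Cref{Prop-1-well}, then feeding $(\zeta\mathds{1}_{\oo},\theta\mathds{1}_{\oo})$ as sources into the backward $(\eta,\psi)$ block of \eqref{adj-extended-1}. Your explicit verification that $Y_{\frac14}\hookrightarrow L^2(Q_T)\hookrightarrow F$ for either choice of $F$ is the one step the paper leaves tacit, and it is handled correctly.
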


\medskip 

\subsection{Well-posedness of the $4\times 4$ nonlinear system}

Using a fixed point theorem,  we now prove  the  well-posedness of our $4\times 4$  nonlinear system \eqref{sys_equiv_1}--\eqref{sys_equiv_2}.  

\begin{proposition}\label{Prop-nonlinear-well}
Let $T>0$ and $L>0$.  Then,	there exists some positive real number $\delta_0$ such that for every $(u_0,v_0)\in [L^2(0,L)]^2$, $(h_1, h_2) \in [L^2((0,T)\times \omega)]^2$ and $(\xi_1, \xi_2) \in [L^2(Q_T)]^2$, satisfying 
\begin{align}\label{assumption-1}
\|(u_0,v_0)\|_{[L^2(0,L)]^2} + \|(h_1,h_2)  \|_{[L^2((0,T)\times \omega)]^2} + \|(\xi_1,\xi_2)  \|_{[L^2(Q_T)]^2} \leq \delta_0 , 
\end{align}
  the system \eqref{sys_equiv_1}--\eqref{sys_equiv_2} possesses a unique solution   
  \begin{align*}
  	(u,v,p,q) \in  \big[ Y_{\frac{1}{4}} \big]^4 ,
  \end{align*}
where $Y_{\frac{1}{4}}$ is defined by \eqref{spaces-1/4}. 
\end{proposition}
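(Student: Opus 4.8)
The plan is to set up a contraction mapping on a closed ball of $\big[Y_{\frac14}\big]^4$ and to invoke \Cref{proposition-linear-well} at each iteration, so that the whole argument reduces to controlling the six quadratic terms $uu_x$, $vv_x$, $uv_x$, $pu_x$, $qv_x$, $pv_x$ as frozen source terms in the right class. The natural choice is to work with $F=\widetilde X_{\frac14}=L^1(0,T;L^2(0,L))$, since $\xi_1,\xi_2\in L^2(Q_T)\hookrightarrow \widetilde X_{\frac14}$ on the finite interval $(0,T)$ and, more importantly, because the quadratic nonlinearities land precisely in this space. Indeed, the crucial ingredient is the bilinear estimate
\begin{equation*}
\|a\,b_x\|_{\widetilde X_{\frac14}}\le C\,\|a\|_{Y_{\frac14}}\|b\|_{Y_{\frac14}},\qquad a,b\in Y_{\frac14},
\end{equation*}
which I would prove using the one-dimensional embedding $H^1(0,L)\hookrightarrow L^\infty(0,L)$: since $Y_{\frac14}=L^2(0,T;H^1)\cap \C^0([0,T];L^2)$, we get $a\in L^2(0,T;L^\infty(0,L))$ and $b_x\in L^2(0,T;L^2(0,L))$, whence $\int_0^T\|a\,b_x\|_{L^2(0,L)}\,dt\le \|a\|_{L^2(0,T;L^\infty)}\|b_x\|_{L^2(0,T;L^2)}$ by Cauchy--Schwarz in time.

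Second, I would define the map $\Lambda$ on $\big[Y_{\frac14}\big]^4$ by $\Lambda(\bar u,\bar v,\bar p,\bar q)=(u,v,p,q)$, where $(u,v,p,q)$ is the unique solution furnished by \Cref{proposition-linear-well} of the linearized system \eqref{sys_linear-1}--\eqref{sys_linear-2} with the frozen source terms
\begin{equation*}
f_1=3\bar u\bar u_x-6\bar v\bar v_x+\xi_1,\quad f_2=-3\bar u\bar v_x+\xi_2,\quad f_3=3\bar p\bar u_x-3\bar q\bar v_x,\quad f_4=-6\bar p\bar v_x.
\end{equation*}
Note that the linear coupling terms $u\mathds 1_{\oo}$ and $v\mathds 1_{\oo}$ appearing in \eqref{sys_linear-2} are kept inside the linear operator, so that only the genuinely quadratic contributions are frozen. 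By the bilinear estimate above, $(f_1,f_2,f_3,f_4)\in\big[\widetilde X_{\frac14}\big]^4$ with $\|(f_1,f_2,f_3,f_4)\|_{[\widetilde X_{\frac14}]^4}\le C\|(\bar u,\bar v,\bar p,\bar q)\|_{[Y_{\frac14}]^4}^2+\|(\xi_1,\xi_2)\|_{[L^2(Q_T)]^2}$, and \Cref{proposition-linear-well} yields $\Lambda(\bar u,\bar v,\bar p,\bar q)\in\big[Y_{\frac14}\big]^4$ together with the estimate \eqref{estimate-linear}.

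Third, I would show that $\Lambda$ maps a ball $B_R=\{\,w:\|w\|_{[Y_{\frac14}]^4}\le R\,\}$ into itself and is a contraction there, for $R$ and $\delta_0$ chosen small enough. The self-mapping follows from combining \eqref{estimate-linear} with the bilinear bound, giving $\|\Lambda(\bar w)\|_{[Y_{\frac14}]^4}\le C\big(\delta_0+R^2\big)$, so that choosing $R=2C\delta_0$ and then $\delta_0$ small makes the right-hand side $\le R$. For the contraction, the difference $\Lambda(\bar w^1)-\Lambda(\bar w^2)$ solves \eqref{sys_linear-1}--\eqref{sys_linear-2} with zero initial data, zero controls, and source terms that are differences of the quadratic expressions; writing each such difference in the form $a\,b_x-a'\,b'_x=a(b-b')_x+(a-a')b'_x$ and applying the bilinear estimate gives $\|\Lambda(\bar w^1)-\Lambda(\bar w^2)\|_{[Y_{\frac14}]^4}\le C R\,\|\bar w^1-\bar w^2\|_{[Y_{\frac14}]^4}$, which is a strict contraction once $CR<1$, i.e. for $\delta_0$ small. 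The Banach fixed point theorem then produces the unique solution in $B_R$.

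The main obstacle — and the only genuinely nontrivial point — is the bilinear estimate, i.e. verifying that the quadratic terms are controlled in $\widetilde X_{\frac14}$ by the $Y_{\frac14}$-norms with no loss of regularity; everything else is the standard small-data fixed point machinery. It is worth emphasizing that $\widetilde X_{\frac14}=L^1(0,T;L^2)$ is precisely the endpoint for which both the embedding $H^1\hookrightarrow L^\infty$ and the Cauchy--Schwarz splitting in time close, which is why this particular interpolation space is the correct functional setting for the nonlinear problem.
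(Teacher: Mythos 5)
Your proposal is correct and follows essentially the same route as the paper: the paper's own proof also freezes the quadratic terms as sources in $\widetilde X_{\frac14}=L^1(0,T;L^2(0,L))$, proves the same bilinear estimate via $H^1(0,L)\hookrightarrow L^\infty(0,L)$ and Cauchy--Schwarz in time (this is \Cref{Lemma-nonlinear-conti}), and then runs the identical small-ball self-mapping plus contraction argument with \Cref{proposition-linear-well}, concluding by the Banach fixed point theorem. The only differences are cosmetic choices of the constants $R$ and $\delta_0$.
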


\smallskip 

Before going to the proof of above proposition,   we  prove the following lemma. 


\begin{lemma}\label{Lemma-nonlinear-conti}
	Let $y_1 , y_2\in L^2(0,T; H^1(0,L))$. Then, $y_1 y_{2,x} \in L^1(0,T; L^2(0,L))$ and the map 
	\begin{align}\label{map-cont}
		(y_1,  y_2) \in [L^2(0,T; H^1(0,L))]^2 \mapsto y_1  y_{2,x} \in L^1(0,T;L^2(0,L) ) 
	\end{align}
is continuous.  
\end{lemma}

\begin{proof}
	Consider any  $(y_1, y_2)$ and $(\widetilde y_1, \widetilde y_2)$ from the space $[L^2(0,T; H^1(0,L))]^2$. Then, we have 
\begin{equation}\label{bound-yy_x}
\begin{aligned} 
\|y_1y_{2,x}\|_{L^1(0,T; L^2(0,L))} 
&= \int_0^T \|y_1 y_{2,x}\|_{L^2(0,L)}  \\
&\leq \int_0^T \|y_1\|_{L^\infty(0,L)} \|y_{2,x}\|_{L^2(0,L)} 
\\
& \leq C_0 \int_0^T \|y_1\|_{H^1(0,L)} \|y_2\|_{H^1(0,L)} \\
& \leq C_0 \|y_1\|_{L^2(0,T; H^1(0,L))} \|y_2\|_{L^2(0,T; H^1(0,L))},  
	\end{aligned}
\end{equation}
for some constant $C_0>0$, 
	which yields the first result of the lemma.
	
\smallskip 

Next, we compute that
	\begin{equation}\label{bound-differebce-yy_x}
		\begin{aligned}
			&\|y_1y_{2,x}  - \y_1 \y_{2,x} \|_{L^1(0,T;L^2(0,L))} \\ 
			&\leq 
			\int_0^T \| y_1 (y_{2,x} - \y_{2,x} )\|_{L^2(0,L)}  
			 + \int_0^T \| (y_{1} - \y_{1} ) \y_{2,x}\|_{L^2(0,L)} \\
		& \leq  \int_0^T \| y_1 \|_{L^\infty(0,L)} \|y_{2,x} - \y_{2,x} \|_{L^2(0,L)} + \int_0^T \| y_{1} - \y_{1} \|_{L^\infty(0,L)} \|\y_{2,x}\|_{L^2(0,L)} \\
		& \leq \int_0^T \| y_1 \|_{H^1(0,L)} \|y_{2} - \y_{2} \|_{H^1(0,L)} + \int_0^T \| y_{1} - \y_{1} \|_{H^1(0,L)} \|\y_{2}\|_{H^1(0,L)} \\
		& \leq C_1 \left(\|y_1\|_{L^2(H^1)} + \| \y_2\|_{L^2(H^1)}    \right) \, \left( \| y_1 - \y_1\|_{L^2(H^1)} + \| y_2 - \y_2\|_{L^2(H^1)}\right)    ,
		\end{aligned} 
	\end{equation}
for some constant $C_1>0$.
This gives the continuity of the map \eqref{map-cont}. 

Therefore, the proof of \Cref{Lemma-nonlinear-conti} is  complete.
\end{proof}

\smallskip

\begin{proof}[\bf Proof of \Cref{Prop-nonlinear-well}]
	
We now prove the well-posedness of our extended system \eqref{sys_equiv_1}--\eqref{sys_equiv_2}. 
Let us define the map 
\begin{align}\label{map-fixed}
	\Lambda : \big[ Y_{\frac{1}{4}} \big]^4 \to \big[ Y_{\frac{1}{4}} \big]^4 , \quad  \Lambda(\widetilde u, \widetilde v,\widetilde p,\widetilde q) = (u,v,p,q),
\end{align}
where $(u,v,p,q)$ is the unique solution to \eqref{sys_linear-1}--\eqref{sys_linear-2} with $(u_0, v_0)\in [L^2(0,L)]^2$, $(h_1,h_2)\in [L^2((0,T)\times \omega)]^2$ and 
\begin{align*} 
&f_1=\xi_1 + 3\widetilde u \widetilde u_x - 6 \widetilde v \widetilde v_x, 
 \quad f_2=\xi_2  - 3\widetilde u \widetilde v_x, \\
&f_3=  3\widetilde p \widetilde u_x - 3 \widetilde q \widetilde v_x  , \qquad \ \ \ f_4 = -6\widetilde p \widetilde v_x .
\end{align*}

Then, by means of \Cref{proposition-linear-well} and the bound \eqref{bound-yy_x},   there exists some constant $C_2>0$ such that we have 
\begin{equation}\label{enrgy-est-non}
	\begin{aligned}
		\|(u,v,p,q)\|_{[Y_{\frac{1}{4}}]^4} \leq C_2 \Big( \|(u_0, v_0)\|_{[L^2(0,L)]^2} + \|(h_1,h_2)\|_{[L^2((0,T)\times \omega)]^2} + \|(\xi_1, \xi_2)\|_{[L^2(Q_T)]^2} \\
		 + \|\widetilde u \|^2_{L^2(H^1)} +  \|\widetilde v \|^2_{L^2(H^1)} +  \|\widetilde u \|_{L^2(H^1)}  \|\widetilde v\|_{L^2(H^1)}    + 
		 \|\widetilde p \|_{L^2(H^1)}  \|\widetilde u\|_{L^2(H^1)} \\ 
		 + \|\widetilde q \|_{L^2(H^1)}  \|\widetilde v\|_{L^2(H^1)} +   \|\widetilde p \|_{L^2(H^1)}  \|\widetilde v \|_{L^2(H^1)}  \Big) . 
	\end{aligned}
\end{equation}

Now, denote the set 
\begin{align}
	\B_R:= \Big\{ (u,v,p,q)\in \big[Y_{\frac{1}{4}} \big]^4 \, : \, \|u\|_{Y_{\frac{1}{4}}} +   \|v\|_{Y_{\frac{1}{4}}}  + \|p\|_{Y_{\frac{1}{4}}}  + \|q\|_{Y_{\frac{1}{4}}}    \leq R       \Big\} .
\end{align}

\smallskip 

-- Then, starting   with $(\widetilde u, \widetilde v,\widetilde p,\widetilde q) \in \B_R$, the estimate \eqref{enrgy-est-non} becomes 
\begin{equation}
	\begin{aligned}
		\|(u,v,p,q)\|_{[Y_{\frac{1}{4}}]^4} \leq C_2 \Big( \|(u_0, v_0)\|_{[L^2(0,L)]^2} + \|(h_1,h_2)\|_{[L^2((0,T)\times \omega)]^2} \\
		 + \|(\xi_1, \xi_2)\|_{[L^2(Q_T)]^2}  +  6R^2\Big).
	\end{aligned}
	\end{equation}
In what follows,  if  $R<\frac{1}{6C_2}$ and 
\begin{align*}
	 \|(u_0, v_0)\|_{[L^2(0,L)]^2} + \|(h_1,h_2)\|_{[L^2((0,T)\times \omega)]^2} 
	+ \|(\xi_1, \xi_2)\|_{[L^2(Q_T)]^2} < \frac{R-6C_2 R^2}{C_2},
	\end{align*}
we have  $\Lambda(\B_R)\subset \B_R$, which concludes  the stability of the map $\Lambda$ given by \eqref{map-fixed}  on the set  $\B_R$.  The quantity $\delta_0$ in \eqref{assumption-1} can  be now chosen as follows: 
$$ \delta_0= \frac{R-6C_2 R^2}{C_2}. $$

\smallskip

--  Let us prove that $\Lambda$ is a contraction map. Consider two elements $(\widetilde u, \widetilde v, \widetilde p, \widetilde q)$ and $(\widehat u, \widehat v, \widehat p, \widehat q)$ from the space $\B_R$. We denote the associated solutions to the linearized model \eqref{sys_linear-1}--\eqref{sys_linear-2}   by $(u_1,v_1,p_1,q_1)$  and  $(u_2,v_2,p_2,q_2)$,  respectively    with 
\begin{align*} 
	&f_1=\xi_1 + 3\widetilde u \widetilde u_x - 6 \widetilde v \widetilde v_x, 
	\quad f_2=\xi_2  - 3\widetilde u \widetilde v_x, \\
	&f_3=  3\widetilde p \widetilde u_x - 3 \widetilde q \widetilde v_x  , \qquad \ \ \  f_4 = -6\widetilde p \widetilde v_x ,
\end{align*}
and  
\begin{align*} 
	&f_1=\xi_1 + 3\widehat u \widehat u_x - 6 \widehat v \widehat v_x, 
	\quad f_2=\xi_2  - 3\widehat u \widehat v_x, \\
	&f_3=  3\widehat p \widehat u_x - 3 \widehat q \widehat v_x  , \qquad \ \ \ f_4 = -6\widehat p \widehat v_x ,
\end{align*}
We further denote $(u,v,p,q)= (u_1-u_2,  v_1-v_2, p_1-p_2, q_1-q_2)$ which satisfies the following set of equations
\begin{align}\label{sys_diff_1}
	&\begin{cases}
		u_t -\frac{1}{2}u_{xxx} = 3(\widetilde u \widetilde u_x -  \widehat u \widehat u_x)   -  6(\widetilde v \widetilde v_x - \widehat v \widehat v_x) 
		 &  \text{in } Q_T, \\
		v_t + v_{xxx} = - 3(\widetilde u\widetilde v_x - \widehat u \widehat v_x)   
		  &  \text{in } Q_T, \\
		u(t,0) = u(t,L) = u_x(t,0) = 0 & \text{for } t\in (0,T), \\
		v(t,0) = v(t,L) = v_x(t,L) =0 & \text{for } t\in (0,T) , \\
		u(0) = 0, \ \ v(0) = 0 & \text{in } (0,L) ,
	\end{cases} \\
\label{sys_diff_2}
	&\begin{cases} 	
		-p_t + \frac{1}{2}p_{xxx}  = u  \mathds{1}_{\mathcal O}  + 3 (\widetilde p \widetilde u_x - \widehat p \widehat u_x) - 3 (\widetilde q \widetilde v_x - \widehat q \widehat v_x)  
		 &  \text{in } Q_T, \\
		-q_t - q_{xxx} =  v \mathds{1}_{\mathcal O} - 6(\widetilde p \widetilde v_x - \widehat p \widehat v_x)  &  \text{in } Q_T, \\
		p(t,0) = p(t,L) = p_x(t,L) = 0 & \text{for } t\in (0,T), \\
		q(t,0) = q(t,L) = q_x(t,0) =0 & \text{for } t\in (0,T) , \\
		p(T)=0, \ \ q(T)=0 & \text{in } (0,L).
	\end{cases}
\end{align}

Thanks to the estimate \eqref{bound-differebce-yy_x},  we have 
\begin{equation*}
	\begin{aligned} 
&	\|\widetilde u  \widetilde u_x - \widehat u \widehat u_x\|_{L^1(L^2)} 
	\leq  2C_1 \left(\|\widetilde u\|_{L^2(H^1)} + \| \widehat u\|_{L^2(H^1)}    \right) \|\widetilde u - \widehat u\|_{L^2(H^1)} ,
\\
& \|\widetilde v  \widetilde v_x - \widehat v \widehat v_x\|_{L^1(L^2)} 
\leq  2C_1 \left(\|\widetilde v\|_{L^2(H^1)} + \| \widehat v\|_{L^2(H^1)}    \right) \|\widetilde v - \widehat v\|_{L^2(H^1)} ,\\
& \|\widetilde u  \widetilde v_x - \widehat u \widehat v_x\|_{L^1(L^2)} 
\leq  C_1 \left(\|\widetilde u\|_{L^2(H^1)} + \| \widehat v\|_{L^2(H^1)}    \right) \left(\|\widetilde u - \widehat u\|_{L^2(H^1)} + \|\widetilde v - \widehat v\|_{L^2(H^1)} \right)  ,\\
& \|\widetilde p  \widetilde u_x - \widehat p \widehat u_x\|_{L^1(L^2)} 
\leq  C_1 \left(\|\widetilde p\|_{L^2(H^1)} + \| \widehat u\|_{L^2(H^1)}    \right) \left(\|\widetilde p - \widehat p\|_{L^2(H^1)} + \|\widetilde u - \widehat u\|_{L^2(H^1)} \right), \\
&  \|\widetilde q  \widetilde v_x - \widehat q \widehat v_x\|_{L^1(L^2)} 
\leq  C_1 \left(\|\widetilde q \|_{L^2(H^1)} + \| \widehat v\|_{L^2(H^1)}    \right) \left(\|\widetilde q - \widehat q\|_{L^2(H^1)} + \|\widetilde v - \widehat v\|_{L^2(H^1)} \right) , \\
&  \|\widetilde p  \widetilde v_x - \widehat p \widehat v_x\|_{L^1(L^2)} 
\leq  C_1 \left(\|\widetilde p \|_{L^2(H^1)} + \| \widehat v\|_{L^2(H^1)}    \right) \left(\|\widetilde p - \widehat p\|_{L^2(H^1)} + \|\widetilde v - \widehat v\|_{L^2(H^1)} \right) ,
\end{aligned}
\end{equation*}
where the constant $C_1>0$ is the same as appeared in \eqref{bound-differebce-yy_x}. 

Using the above information and by \Cref{proposition-linear-well}, we can say that there exists some constant $C_3>0$ such that  the solution to \eqref{sys_diff_1}--\eqref{sys_diff_2} satisfies 
\begin{equation}
	\begin{aligned}
	&\| (u,v,p,q)\|_{[Y_{\frac{1}{4}}]^4} \\
	& \leq C_3  \left( \|(\widetilde u, \widetilde v, \widetilde p, \widetilde q)\|_{[L^2(H^1)]^4} + \|(\widehat u, \widehat v, \widehat p, \widehat q)\|_{[L^2(H^1)]^4} \right)  \|(\widetilde u, \widetilde v, \widetilde p, \widetilde q) - (\widehat u, \widehat v, \widehat p, \widehat q)\|_{[L^2(H^1)]^4}  \\
	&\leq  2C_3R \,  \|(\widetilde u, \widetilde v, \widetilde p, \widetilde q) - (\widehat u, \widehat v, \widehat p, \widehat q)\|_{[L^2(H^1)]^4}
	\end{aligned}
\end{equation}
Now,  choose $R>0$ in such a way that $2C_3 R<1$, so that  the map $\Lambda$ is contracting. Therefore, using the Banach fixed point theorem,  there exists a unique fixed point  of $\Lambda$ in $\B_R$, which is actually the  unique solution $(u,v,p,q)$ to \eqref{sys_equiv_1}--\eqref{sys_equiv_2}. 

The proof is complete.
\end{proof}


\section{Carleman estimates}\label{Sec-Carleman}
This  section is devoted to obtain a suitable  Carleman estimate satisfied by the solution  to our  adjoint system \eqref{adj-extended-1}--\eqref{adj-extended-2}.


\subsection{Choice of Carleman weights}\label{Section-weights}
   
 Recall that $\oo \cap \omega\neq \emptyset$ and $\omega_0\subset \subset \oo \cap \omega$. Assume that $\omega_0=(l_0,l_1)$ with $0<l_0<l_1<L$, and set $l_{1/2}={(l_0+l_1)}/{2}$. We now consider the weight functions as introduced in \cite[Section 3]{Cerpa-Kdv_Schrodinger} (see also \cite{Carreno-Hirota-Satsuma}): for $K_1, K_2>0$ (to be specified later), define the smooth functions 
\begin{align}\label{Weight-1}
	\beta(x) = 1+K_1 \left(1-e^{ -K_2(x- l_{1/2} )^2}  \right) , \quad \ \xi(t) = \frac{1}{t(T-t)} , \qquad \forall x\in [0,L], \ \ \ \forall t\in (0,T),
\end{align}
and 
\begin{align}\label{Weight-2}
	\vphi(t,x) = \xi(t)\beta(x) , \qquad \forall (t,x) \in (0,T) \times [0,L].  
\end{align}

For any $K_1,K_2>0$, we note that $\beta >0$ in $[0,L]$ and consequently,  $\vphi>0$ in $(0,T) \times [0,L]$.  We further observe that
\begin{equation}\label{property-beta-x}
\begin{aligned}
\text{there exists some } & \ c>0 \ \text{ such that } \	|\beta_x| \geq  c > 0   \quad  \text{in } \ [0,L] \setminus \overline{\omega_0} , \\
\text{and }\  & \beta_x(0) <0 , \quad \beta_x(L) >0    .
\end{aligned}
\end{equation}
Also, one can choose $K_1$ and $K_2$ in such a way that 
\begin{align}\label{property-beta-xx}
	\beta_{xx} <0 \quad \ \text{in } \ [0,L] \setminus \overline{\omega_0} .
\end{align}
Indeed, the property \eqref{property-beta-xx} holds true if we set
\begin{align}\label{choice-K-2}
 K_2 =\frac{4}{(l_1-l_0)^2}. 
 \end{align}

We further   consider 
\begin{equation}\label{Weight-min-max} 
\begin{aligned}
	&\vphi^*(t) : = \min_{[0,L]} \vphi(t,x)  = \xi(t)\beta(l_{1/2}) = \xi(t), \quad \forall t \in (0,T), \\
	& \widehat \vphi(t) := \max_{[0,L]} \vphi(t,x)  = \xi(t) \big(\max\left\{ \beta(0), \beta(L) \right\}\big), \quad \forall t \in (0,T) .
\end{aligned}
\end{equation}
  Now, denote 
\begin{align*}
	M(K_2, l_{1/2}) : = \max \big\{ 1- e^{-K_2 l^2_{1/2}}, \ 1- e^{-K_2 (L-l_{1/2})^2 }    \big\} .
\end{align*}
 Then, there  exists  some constant $c_0>0$ such that   the  weight functions in \eqref{Weight-min-max}  verify the following criterion:
\begin{align}\label{cond-weights-max-min}
	36 s \vphi^*(t) - 35 s\widehat \vphi(t) \geq c_0 s \xi(t) , \quad \forall t \in (0,T), 
\end{align}
provided we choose $0 < K_1 < \frac{1}{35 M(K_2, l_{1/2})}$; in particular, we set
\begin{align}\label{choice-K-1}
	K_1 = \frac{1}{70 M(K_2, l_{1/2})} .
\end{align} 


%

\subsection{Carleman estimates for the single KdV equation}
Let us prescribe a Carleman estimate for the following system 
\begin{align}\label{adj-gen}  
	\begin{cases}
		z_t \pm z_{xxx} = g  & \text{in } Q_T,  \\
		z(t,0) = z(t,L) = z_x(t,0) = 0  & \text{for } t\in (0,T) ,\\
		z(T) = z_T &\text{in } (0,L)  .
	\end{cases}
\end{align}
with given right hand side $g$ and final data $z_T$. One can also consider the boundary conditions: 
	\begin{align}\label{bdry-gen-2}
	 z(t,0) = z(t,L) = z_x(t,L) = 0 \quad \text{for }t\in (0,T) ,
	\end{align}
as a replacement for  the set of boundary conditions  in \eqref{adj-gen}, and  this will  not affect the underlying Carleman estimate. 

We hereby recall a Carleman estimate for the linear KdV equation which has been obtained for instance in \cite[Theorem 3.1]{Cerpa-Kdv_Schrodinger}; see also  \cite[Proposition 3.1]{Roberto-Rosier-internal-kdv}.  
\begin{proposition}\label{prop-carleman-kdv}
	Let $T>0$ be given  and $\omega_0\subset (0,L)$ be a non-empty open set as introduced in Section \ref{Section-weights}. Then, there exist constants $C>0$ and $s_0>0$ such that for any $g\in L^2(Q_T)$ and $z_T \in L^2(0,L)$, the solution $z$ to    \eqref{adj-gen}
	 satisfies 
	\begin{multline}\label{carlemn-gen}
		s^5 \iint_{Q_T} e^{-2s\vphi}\xi^5 |z|^2 + 	s^3 \iint_{Q_T} e^{-2s\vphi}\xi^3 |z_x|^2 + 	s \iint_{Q_T} e^{-2s\vphi}\xi \, |z_{xx}|^2 \\
		\leq C \bigg(\iint_{Q_T} e^{-2s\vphi} |g|^2  + \int_0^T \int_{\omega_0}  e^{-2s\vphi} \Big[ s^5\xi^5  |z|^2  + s\xi \, |z_{xx}|^2 \Big]   \bigg)     ,
	\end{multline}
	for all $s\geq s_0$. 
\end{proposition}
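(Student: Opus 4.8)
The plan is to follow the classical conjugated-operator scheme for Carleman estimates of dispersive equations, as in \cite[Theorem 3.1]{Cerpa-Kdv_Schrodinger} and \cite[Proposition 3.1]{Roberto-Rosier-internal-kdv}, adapted to the present weights \eqref{Weight-1}--\eqref{Weight-2} and boundary conditions. First I would set $w := e^{-s\vphi} z$, so that $z = e^{s\vphi} w$; since $\vphi(t,x)\to+\infty$ as $t\to 0^+$ and $t\to T^-$, the function $w$ and all of its derivatives vanish at $t=0$ and $t=T$, irrespective of the final datum $z_T$. Conjugating, I compute $Pw := e^{-s\vphi}(\partial_t \pm \partial_x^3)(e^{s\vphi} w)$ and expand it into monomials in $w, w_x, w_{xx}, w_{xxx}, w_t$ whose coefficients carry powers of $s$, $\xi$ and $x$-derivatives of $\beta$. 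These are regrouped as a formally skew-symmetric part $L_1 w$ (collecting $w_{xxx}$, $w_t$ and the odd-order weight terms), a formally symmetric part $L_2 w$, and a lower-order remainder $Rw$, giving $L_1 w + L_2 w = e^{-s\vphi} g - Rw =: g_s$.

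Next I would take $L^2(Q_T)$ norms to obtain $\|L_1 w\|^2 + \|L_2 w\|^2 + 2(L_1 w, L_2 w)_{L^2(Q_T)} = \|g_s\|^2$, the crux being the cross term $2(L_1 w, L_2 w)$. Repeated integration by parts in $x$ (and in $t$) turns this into a distributed interior integral plus boundary contributions. All temporal boundary terms at $t=0,T$ vanish by the remark above. Collecting leading orders in $s$, the interior integral takes the form
\begin{equation*}
  \iint_{Q_T} \Big( a_5\, s^5\xi^5 |w|^2 + a_3\, s^3\xi^3 |w_x|^2 + a_1\, s\xi\, |w_{xx}|^2 \Big),
\end{equation*}
where, modulo lower-order corrections absorbable for $s$ large, the coefficients $a_j=a_j(x)$ are positive multiples of suitable powers of $\beta_x$ and of $-\beta_{xx}$. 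The spatial boundary terms at $x=0,L$ are treated using the boundary conditions of \eqref{adj-gen} (or the variant \eqref{bdry-gen-2}): most vanish outright, and the surviving traces of $w_x$ carry a sign governed by $\beta_x(0)<0$, $\beta_x(L)>0$ from \eqref{property-beta-x}, hence may be discarded to the favorable side of the inequality.

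The interior positivity is then a direct consequence of the structural properties of $\beta$: by \eqref{property-beta-x} and \eqref{property-beta-xx}, $|\beta_x|\geq c>0$ and $-\beta_{xx}>0$ on $[0,L]\setminus\overline{\omega_0}$, so the three coefficients $a_5,a_3,a_1$ are bounded below by positive constants there. Splitting the domain into $\omega_0$ and its complement thus gives the desired interior lower bound while transferring the contributions over $\omega_0$ to the right-hand side. This immediately yields the local observation terms in $s^5\xi^5|z|^2$ and $s\xi|z_{xx}|^2$, but the local gradient term $s^3\xi^3\int_{\omega_0}|w_x|^2$ has no counterpart among the observations of \eqref{carlemn-gen} and must be absorbed. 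I would do so by the standard cutoff argument: fixing $\chi\in C^\infty_c(\omega)$ with $\chi\equiv 1$ on $\omega_0$ and writing, with $a=\chi s^3\xi^3$,
\begin{equation*}
  \iint a\,|w_x|^2 = -\iint a\, w\, w_{xx} + \tfrac12\iint a_{xx}\,|w|^2,
\end{equation*}
Young's inequality bounds the local gradient term by a small multiple of the interior $s\xi|w_{xx}|^2$ term plus local $s^5\xi^5|w|^2$ terms, both already controlled.

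Finally, after absorbing all remainders by choosing $s\geq s_0$ large, I would add a small fraction of $\|L_1 w\|^2+\|L_2 w\|^2$ to reinstate the full interior $s\xi|w_{xx}|^2$ term, and then revert from $w$ to $z$ through $z=e^{s\vphi}w$, which relates $e^{-s\vphi}z$, $e^{-s\vphi}z_x$ and $e^{-s\vphi}z_{xx}$ to $w$, $w_x$, $w_{xx}$ up to lower-order weight factors that are harmless for large $s$. This produces \eqref{carlemn-gen}. I expect the main obstacle to be the bookkeeping of the boundary terms at $x=0,L$ together with the absorption of the local gradient term; the interior positivity itself follows cleanly from \eqref{property-beta-x}--\eqref{property-beta-xx}.
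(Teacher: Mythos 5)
The paper never proves \Cref{prop-carleman-kdv}: the estimate is recalled as a known result from \cite[Theorem 3.1]{Cerpa-Kdv_Schrodinger} and \cite[Proposition 3.1]{Roberto-Rosier-internal-kdv}, so there is no internal proof to compare against line by line. Your sketch is, in structure, exactly the argument of those references --- conjugation $w=e^{-s\vphi}z$, symmetric/skew-symmetric splitting of the conjugated operator, expansion of the cross term, boundary terms signed by $\beta_x(0)<0<\beta_x(L)$ from \eqref{property-beta-x}, interior positivity from \eqref{property-beta-x}--\eqref{property-beta-xx}, and cutoff absorption of the local gradient term --- and that skeleton is correct.

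However, two of your steps, as written, prove a weaker inequality than \eqref{carlemn-gen}. (i) \emph{Observation set.} Your cutoff $\chi\in\C^\infty_c(\omega)$ with $\chi\equiv1$ on $\omega_0$ turns the defective local term $s^3\iint \chi\,\xi^3|w_x|^2$ into local zero-order terms supported on $\mathrm{supp}\,\chi$, which is strictly larger than $\omega_0$; and this enlargement is unavoidable if one uses only the stated properties \eqref{property-beta-x}--\eqref{property-beta-xx}, since they leave the interior coefficients possibly degenerate on all of $\overline{\omega_0}$, and no cutoff can be $\equiv 1$ on $\overline{\omega_0}$ while being supported in $\omega_0$. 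So your argument yields \eqref{carlemn-gen} with $\omega_0$ replaced by a neighbourhood of $\overline{\omega_0}$ (which, moreover, should be kept inside $\mathcal O\cap\omega$, not merely inside $\omega$, for the later use in \Cref{Carleman-main}). To land on $\omega_0$ itself one must exploit the explicit Gaussian form \eqref{Weight-1} with the choice \eqref{choice-K-2}: $\beta_x$ vanishes only at $l_{1/2}$ and $\beta_{xx}<0$ whenever $|x-l_{1/2}|>(l_1-l_0)/(2\sqrt2)$, so positivity in fact holds outside a \emph{compact} subset $K\subset\subset\omega_0$, and the cutoff may then be taken in $\C^\infty_c(\omega_0)$ with $\chi\equiv1$ near $K$. (ii) \emph{Return to $z$ in the local terms.} The identity $w_{xx}=e^{-s\vphi}\bigl(z_{xx}-2s\vphi_x z_x+(s^2\vphi_x^2-s\vphi_{xx})z\bigr)$ shows that the local observation $s\xi|w_{xx}|^2$ produces, besides the two allowed observations, a local term $s^3\xi^3e^{-2s\vphi}|z_x|^2$, which is not present in \eqref{carlemn-gen}; it must be removed by one more integration-by-parts/Young step, now performed on $z$ with the weight $e^{-2s\vphi}$ carried inside the cutoff. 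Both repairs are routine, and with them your proof is complete and coincides with the one in the cited literature.
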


Now by using the above proposition,   we can obtain a modified Carleman  inequality  for \eqref{adj-gen} with more regular right hand side $g$; see \eqref{carleman-desired} below.  Although, similar  result   has already been addressed for instance in \cite{Roberto-Rosier-internal-kdv,Carreno-Hirota-Satsuma},  we give a sketch of the proof for  sake of completeness. More precisely, we prove the following proposition.

	%
\begin{proposition}\label{prop-carleman-regular}
	Let $T>0$ be given  and $\omega_0\subset (0,L)$ be a non-empty open set as introduced in Section \ref{Section-weights}.  Also, assume that $\nu\in (0,1]$. Then, there exist constants $C>0$ and $s_0>0$ such that for any $g\in L^2(0,T; H^{\nu}(0,L))$  and $z_T \in L^2(0,L)$, the solution $z$ to    \eqref{adj-gen} satisfies 
\begin{equation}\label{carleman-desired}
	\begin{aligned} 
	& s^5 \iint_{Q_T} e^{-2s\vphi}\xi^5 |z|^2 + 	s^3 \iint_{Q_T} e^{-2s\vphi}\xi^3 |z_x|^2  \\
	& \qquad  \qquad \qquad + 	s \iint_{Q_T} e^{-2s\vphi}\xi |z_{xx}|^2 
	 +  s\int_0^T e^{-2s\widehat \vphi} \xi^{-3} \|z\|^2_{H^{2+\nu}(0,L)}   \\
	 &\leq 
	  C s^3 \iint_{Q_T} e^{-2s\vphi} \xi |g|^2  + C s\int_0^T e^{-2s\widehat \vphi} \xi^{-3} \|g\|^2_{H^{\nu}(0,L)}  \\
& \qquad \qquad 	+ C s^5 \int_0^T \int_{\omega_0} e^{-2s\vphi} \xi^{5}  |z|^2 + C s \int_0^T   \int_{\omega_0}  e^{-2s (1+ \frac{2}{\nu}) \vphi^* + \frac{4s}{\nu}\widehat \vphi  }   \,  \xi^{1+ \frac{8}{\nu}}  |z|^2 ,  
	\end{aligned}     
\end{equation}
for all $s\geq s_0$. 
\end{proposition}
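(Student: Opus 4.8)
The plan is to upgrade the basic Carleman inequality of \Cref{prop-carleman-kdv} by exploiting the smoothing effect of the KdV operator recorded in \Cref{Prop-3-well}. The two genuinely new features of \eqref{carleman-desired} are the interior regularity term $s\int_0^T e^{-2s\widehat\vphi}\xi^{-3}\|z\|^2_{H^{2+\nu}(0,L)}$ on the left and the replacement of the crude source term $\iint_{Q_T}e^{-2s\vphi}|g|^2$ by the $H^\nu$-weighted quantities on the right. To produce them I would introduce the purely temporal weight $\rho(t):=s^{1/2}e^{-s\widehat\vphi(t)}\xi(t)^{-3/2}$ and set $w:=\rho\,z$. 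Since $\rho$ depends on $t$ only, $\rho_x\equiv0$, and hence $w$ solves the same KdV equation as $z$, namely $w_t\pm w_{xxx}=\rho\,g+\rho_t\,z$, with the same homogeneous boundary conditions; moreover, because $e^{-s\widehat\vphi}$ together with all its time-derivatives vanishes as $t\to0^+$ and $t\to T^-$, the function $w$ carries zero data at both endpoints.

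The core of the argument is to apply \Cref{Prop-3-well} to $w$ with $\mu=\tfrac12+\tfrac\nu4\in[\tfrac14,1]$. By \eqref{spaces-nu/4} one has $Y_\mu=L^2(0,T;H^{2+\nu})\cap\C^0([0,T];H^{1+\nu})$ and $X_\mu=L^2(0,T;H^\nu)$, so the smoothing estimate reads $\|w\|^2_{L^2(0,T;H^{2+\nu})}\le C\|\rho g+\rho_t z\|^2_{X_\mu}$. The left-hand side is exactly $s\int_0^T e^{-2s\widehat\vphi}\xi^{-3}\|z\|^2_{H^{2+\nu}}$, while the contribution of $\rho g$ reproduces the desired source term $s\int_0^T e^{-2s\widehat\vphi}\xi^{-3}\|g\|^2_{H^\nu}$. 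For the first three left-hand terms of \eqref{carleman-desired} I would simply invoke \Cref{prop-carleman-kdv}, noting that $\iint_{Q_T}e^{-2s\vphi}|g|^2\le C s^3\iint_{Q_T}e^{-2s\vphi}\xi|g|^2$ because $\xi$ is bounded below and $s\ge s_0$.

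It then remains to dispose of the two parasitic terms $\rho_t z$ and the local second-derivative term $s\int_0^T\int_{\omega_0}e^{-2s\vphi}\xi|z_{xx}|^2$ coming from \Cref{prop-carleman-kdv}. For the first, a direct computation using $|\xi_t|\le C\xi^2$ and $\widehat\vphi_t=c_\beta\,\xi_t$ for a constant $c_\beta$ gives $|\rho_t|\le Cs^{3/2}e^{-s\widehat\vphi}\xi^{1/2}$; since $\nu\le1$ gives $\|z\|_{H^\nu}\le\|z\|_{H^1}$ and $e^{-2s\widehat\vphi}\le e^{-2s\vphi}$ (as $\widehat\vphi=\max_x\vphi$), the resulting bound $Cs^3\iint_{Q_T}e^{-2s\vphi}\xi\big(|z|^2+|z_x|^2\big)$ is absorbed into the left-hand terms $s^5\iint e^{-2s\vphi}\xi^5|z|^2$ and $s^3\iint e^{-2s\vphi}\xi^3|z_x|^2$ for $s$ large, the surplus powers of $\xi$ being harmless because $\xi$ is bounded below.

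The \textbf{main obstacle} is the elimination of the local term $s\int_0^T\int_{\omega_0}e^{-2s\vphi}\xi|z_{xx}|^2$, which carries two spatial derivatives that are absent from the target, whose local term only involves $|z|^2$. The idea is to trade these derivatives against the interior regularity just gained: after localizing near $\omega_0$ with a cut-off (legitimate since $\omega_0\subset\subset\oo\cap\omega$) and using the KdV smoothing to control $\|z\|_{H^{2+\nu}}$ on a neighbourhood of $\omega_0$, one interpolates $\|z\|^2_{H^2}\le C\|z\|^{2\theta}_{H^{2+\nu}}\|z\|^{2(1-\theta)}_{L^2}$ with $\theta=\tfrac{2}{2+\nu}$ and applies Young's inequality with conjugate exponents $\tfrac{2+\nu}{2}$ and $\tfrac{2+\nu}{\nu}$. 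The high-regularity part is absorbed into the $H^{2+\nu}$-term on the left, and the remaining local $L^2$-contribution produces precisely the last term of \eqref{carleman-desired}; the exponents $\tfrac{2}{\nu}$ and $\tfrac{8}{\nu}$ in its weight $e^{-2s(1+\frac{2}{\nu})\vphi^*+\frac{4s}{\nu}\widehat\vphi}\xi^{1+\frac{8}{\nu}}$ are exactly the trace of this Young splitting together with the mismatch between the weights $\xi e^{-2s\vphi}$ and $\xi^{-3}e^{-2s\widehat\vphi}$. Keeping this weight bookkeeping consistent — in particular handling the commutator terms generated by the cut-off so that no new $z_{xx}$ term reappears — is the delicate point, and it is carried out along the lines of \cite{Roberto-Rosier-internal-kdv,Carreno-Hirota-Satsuma}. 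Collecting the four steps and choosing $s_0$ large enough yields \eqref{carleman-desired}.
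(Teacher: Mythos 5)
Your overall skeleton --- the basic Carleman estimate of \Cref{prop-carleman-kdv}, interpolation of $H^2(\omega_0)$ between $H^{2+\nu}(\omega_0)$ and $L^2(\omega_0)$ followed by Young's inequality to convert the local $z_{xx}$ observation into the weight $e^{-2s(1+\frac{2}{\nu})\vphi^*+\frac{4s}{\nu}\widehat\vphi}\,\xi^{1+\frac{8}{\nu}}$, and a weighted bootstrap through the smoothing result \Cref{Prop-3-well} --- is exactly the paper's strategy, and your bookkeeping of where the exponents $\frac{2}{\nu}$ and $\frac{8}{\nu}$ come from is correct. (Incidentally, the cut-off/commutator discussion is superfluous: since $\xi$, $\vphi^*$ and $\widehat\vphi$ are $x$-independent, one bounds $J\le s\int_0^T e^{-2s\vphi^*}\xi\,\|z\|^2_{H^2(\omega_0)}$ and interpolates directly on $\omega_0$; no commutator terms ever arise, which is how the paper proceeds.)

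The genuine gap is in your treatment of the parasitic term $\rho_t z$. Your one-step bootstrap (applying \Cref{Prop-3-well} with $\mu=\tfrac12+\tfrac{\nu}{4}$ directly to $w=\rho z$) forces you to measure $\rho_t z$ in $L^2(0,T;H^\nu)$, which via $\|z\|_{H^\nu}\le C\|z\|_{H^1}$ produces $Cs^3\iint_{Q_T}e^{-2s\widehat\vphi}\xi\,(|z|^2+|z_x|^2)$. The $|z|^2$ part is indeed absorbed for $s$ large ($s^3\xi$ against $s^5\xi^5$). But the $|z_x|^2$ part carries the \emph{same} power $s^3$ as the left-hand term $s^3\iint_{Q_T}e^{-2s\vphi}\xi^3|z_x|^2$: the comparison $\xi\le (T^2/4)^2\xi^3$ only yields a fixed $T$-dependent constant, which does not shrink as $s\to\infty$, so the claimed absorption ``for $s$ large'' fails. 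The argument can be salvaged, but only by reorganizing the absorption: the entire bootstrap bound for the $H^{2+\nu}$-quantity must enter the main chain multiplied by the Young parameter $\epsilon$ --- i.e.\ one first closes the inequality for the three Carleman terms alone, choosing $\epsilon$ small so that $\epsilon\cdot C\cdot(\textnormal{Carleman left-hand side})$ is absorbed, and only afterwards adds the $H^{2+\nu}$-term to the left. The paper instead sidesteps the $|z_x|^2$ term entirely by a \emph{two-step} bootstrap: first $\widehat z=\rho_1 z$ with $\rho_1=s^{1/2}\xi^{1/2}e^{-s\widehat\vphi}$, which only requires an $L^2(Q_T)$ source and yields control of $\|\widehat z\|_{L^2(0,T;H^2(0,L))}$; then $\widetilde z=\rho_2 z$ with $\rho_2=s^{1/2}\xi^{-3/2}e^{-s\widehat\vphi}$, exploiting $|\rho_{2,t}\rho_1^{-1}|\le Cs$ so that the second-step parasitic term is $Cs^2\|\widehat z\|^2_{L^2(0,T;H^2(0,L))}$ rather than a derivative term of $z$. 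The only surviving parasitic term is then $Cs^5\iint_{Q_T}e^{-2s\widehat\vphi}\xi^5|z|^2$, a fixed multiple of a left-hand Carleman term, and the paper too absorbs it precisely through the choice of $\epsilon$ small in the Young splitting --- not through $s$ large.
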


\begin{proof}
	Let us recall the definition of $\vphi^*$ from \eqref{Weight-min-max}, so that we can rewrite the Carleman estimate \eqref{carlemn-gen} as follows:
		\begin{multline}\label{carlemn-gen-1}
		s^5 \iint_{Q_T} e^{-2s\vphi}\xi^5 |z|^2 + 	s^3 \iint_{Q_T} e^{-2s\vphi}\xi^3 |z_x|^2 + 	s \iint_{Q_T} e^{-2s\vphi}\xi |z_{xx}|^2 \\
		\leq C \bigg(\iint_{Q_T} e^{-2s\vphi} |g|^2 + s^5 \int_0^T \int_{\omega_0}  e^{-2s\vphi}  \xi^5  |z|^2  + s \int_0^T \int_{\omega_0}  e^{-2s\vphi^*}  \xi \, |z_{xx}|^2    \bigg)     ,
	\end{multline}
	for all $s\geq s_0$. 
	
	Our aim is to eliminate the observation integral of  $z_{xx}$ by applying the so-called bootstrap argument. Denote 
	\begin{align}\label{term-J}
	J := s \int_0^T \int_{\omega_0} e^{-2s \vphi^*} \xi \, |z_{xx}|^2 . 
	\end{align}
	Thanks to the fact that  $\xi$ and $\vphi^*$ do not depend on space variable,	 we have 
	\begin{align}\label{term-J-bound}
		J \leq s \int_0^T  e^{-2s \vphi^*} \xi \, \|z\|_{H^2(\omega_0)}^2 .
	\end{align}

	Let $\nu \in (0,1]$. Writing $H^2(\omega_0)$ as  an interpolation  between the spaces $H^{2+\nu}(\omega_0)$ and $L^2(\omega_0)$, and then applying the Young's inequality, we eventually get 
	\begin{align}
	J &\leq s\int_0^T e^{-2s \vphi^*} \xi \, \|z \|^{\frac{4}{(2+\nu)}}_{H^{2+\nu}(\omega_0)}  \| z \|^{\frac{2\nu}{(2+\nu)}}_{L^2(\omega_0)}    
	\notag \\
& \leq \epsilon s \int_0^T e^{-2s\widehat \vphi} \xi^{-3} \|z\|^2_{H^{2+\nu}(\omega_0)} + C_\epsilon \, s \int_0^T
   e^{-2s (1+ \frac{2}{\nu}) \vphi^* + \frac{4s}{\nu}\widehat \vphi  }   \,  \xi^{1+ \frac{8}{\nu}}      \|z\|^2_{L^2(\omega_0)}  , \label{aux-1}
	\end{align}
for any chosen $\epsilon>0$.

 Now we need to determine a proper estimate for the first term in the r.h.s. of \eqref{aux-1}. 	Following the same argument as developed in \cite{Glass-Guerrero} (see also   \cite{Cerpa-Kdv_Schrodinger,Roberto-Rosier-internal-kdv}), let us employ the bootstrap technique. We define $\widehat z(t,x):= \rho_1(t) z(t,x)$ with $\rho_1(t)= s^{1/2}\xi^{1/2} e^{-s\widehat \vphi}$ so that $\widehat z$ satisfies 
\begin{align}\label{equation-hat-z}
	\begin{cases}
		\widehat z_t \pm \widehat z_{xxx} =  g_1:= \rho_1  g + \rho_{1,t} z & \text{in } Q_T, \\
		 	\widehat z(t,0) = 	\widehat z(t,L) = 	\widehat z_x(t,0) = 0 & \text{for } t\in (0,T), \\
		 	\widehat z(T) = 0	 .
	\end{cases}
\end{align}
 Note that 
\begin{align*}
	|\rho_{1,t}| \leq C s^{3/2} \xi^{5/2} e^{-s\widehat \vphi}.
	\end{align*}
Using this, we deduce  
\begin{align}\label{bound-g-1} 
	\|g_1 \|^2_{L^2(Q_T)} \leq  C s\iint_{Q_T} e^{-2s\widehat \vphi}  \xi |g|^2 + C s^3 \int_{Q_T} e^{-2s\widehat \vphi} \xi^5 |z|^2 ,
\end{align}
	and therefore, by virtue of \Cref{Prop-3-well}, we have  $\widehat z \in Y_{\frac{1}{2}}$, where the space $Y_{\frac{1}{2}}$ is introduced in \eqref{spaces-1/2}.
In particular, one has 
\begin{align}\label{bound-hat-z}
	\|\widehat z \|^2_{L^2(0,T; H^2(0,L))} \leq 
	C 	\|g_1 \|^2_{L^2(Q_T)}  .
\end{align}
	
\medskip 
	
Next, we define $\widetilde z(t,x)= \rho_2(t) z(t,x)$ with $\rho_2(t)= s^{1/2}\xi^{-3/2}e^{-s\widehat \vphi}$. Then $\widetilde z$ satisfies 
\begin{align}\label{equation-tilde-z}
	\begin{cases}
		\widetilde z_t \pm \widetilde z_{xxx} =  g_2: = \rho_2  g + \rho_{2,t} \rho^{-1}_{1} \widehat z   & \text{in } Q_T, \\
		\widetilde z(t,0) = 	\widetilde z(t,L) = 	\widetilde z_x(t,0) = 0 & \text{for } t\in (0,T), \\
		\widetilde z(T) = 0	 .
	\end{cases}
\end{align}
Let us compute that 
 $$|\rho_{2,t}\rho^{-1}_1| \leq Cs.$$
 Then, thanks to the fact that  $\widehat z\in Y_{\frac{1}{2}}$ and $g\in L^2(0,T; H^\nu(0,L))$, one has $g_2 \in L^2(0,T; H^\nu(0,L))$ ($=X_{\frac{1}{2} +\frac{\nu}{4}}$ as defined by \eqref{spaces-nu/4}).  
 As a consequence,  by \Cref{Prop-3-well} we have
$$  \widetilde z  \in Y_{\frac{1}{2} +\frac{\nu}{4}} = L^2(0,T; H^{2+\nu}(0,L)) \cap \C^0([0,T]; H^{1+\nu}(0,L)),$$
 with its estimate
 \begin{equation} 
 \begin{aligned}\label{bound-tilde-z}
 	\| \widetilde z \|^2_{L^2(H^{2+\nu})  \cap L^\infty(H^{1+\nu}) } &\leq C \| g_2 \|^2_{L^2(H^\nu)} \\
 	& \leq C s \int_0^T  e^{-2s\widehat \vphi} \xi^{-3} \|g\|^2_{H^{\nu}(0,L)} + C s^2 \| \widehat z\|^2_{L^2(0,T; H^2(0,L))} 	  .
  \end{aligned}
\end{equation}
 Thereafter, from \eqref{bound-g-1},   \eqref{bound-hat-z} and \eqref{bound-tilde-z} we obtain
 \begin{equation}
  \begin{aligned}
  	&s\int_0^T e^{-2s\widehat \vphi} \xi^{-3} \|z\|^2_{H^{2+\nu}(0,L)} \\
  	& \leq C s \int_0^T  e^{-2s\widehat \vphi} \xi^{-3} \|g\|^2_{H^{\nu}(0,L)} + C s^3 \iint_{Q_T} \xi e^{-2s\widehat \vphi} |g|^2 + C s^5 \iint_{Q_T} e^{-2s\widehat \vphi} \xi^5 |z|^2       .
  	\end{aligned} 
	\end{equation}
Utilizing the above estimate in \eqref{aux-1} and combining  with  \eqref{term-J-bound}, \eqref{term-J} and \eqref{carlemn-gen-1} we get the desired Carleman estimate \eqref{carleman-desired} by choosing $\epsilon>0$ small enough. 

The proof is finished. 
\end{proof}	
	
\smallskip 
	
	\subsection{Carleman estimate for the $4\times 4$ adjoint system}
	
	We are now in position to prove a Carleman estimate for the $4\times 4$ adjoint system \eqref{adj-extended-1}--\eqref{adj-extended-2}.  Consider  $\nu=1$ in \Cref{prop-carleman-regular} and denote 
	\begin{equation}\label{notation-I} 
	\begin{aligned}
		 I(z, s) 
	 : = 	s^5 \iint_{Q_T} e^{-2s\vphi}\xi^5 |z|^2 + 	s^3 \iint_{Q_T} e^{-2s\vphi}\xi^3 |z_x|^2 + 	s \iint_{Q_T} e^{-2s\vphi}\xi \, |z_{xx}|^2 \\ + s\int_0^T e^{-2s\widehat \vphi} \xi^{-3} \|z\|^2_{H^{3}(0,L)} ,
	\end{aligned}
	\end{equation}
	so that the inequality \eqref{carleman-desired} becomes
	\begin{equation}\label{carleman-refined}
	\begin{aligned}
		I(z,s) 
		 &\leq C s^3 \iint_{Q_T} e^{-2s\vphi} \xi |g|^2  + C s\int_0^T e^{-2s\widehat \vphi} \xi^{-3} \|g\|^2_{H^{1}(0,L)}  \\
		& \qquad \qquad 	+ C s^5 \int_0^T \int_{\omega_0} e^{-2s\vphi} \xi^{5}  |z|^2 + C s \int_0^T   \int_{\omega_0}  e^{-6s  \vphi^* + 4s\widehat \vphi  }   \,  \xi^{9}  |z|^2 \\
& \leq C s^3 \iint_{Q_T} e^{-2s\vphi} \xi |g|^2  + C s\int_0^T e^{-2s\widehat \vphi} \xi^{-3} \|g\|^2_{H^{1}(0,L)}  
+C s^5 \int_0^T   \int_{\omega_0}  e^{-6s  \vphi^* + 4s\widehat \vphi  }   \,  \xi^{9}  |z|^2 ,
	\end{aligned}
	\end{equation}
	for all $s\geq s_0$.

	\smallskip 
	
Let us  state and prove the main result of the current section. From now onwards,  we shall consider the source terms $g_i \in L^2(0,T; H^1_0(0,L))$ for each $i\in \{1,2,3,4\}$ in the adjoint system \eqref{adj-extended-1}--\eqref{adj-extended-2}.
\begin{theorem}[Main Carleman estimate]\label{Carleman-main}
		Let $T>0$ be given  and $\omega_0\subset (0,L)$ be a non-empty open set as introduced in Section \ref{Section-weights}. Then there exist constants $C>0$ and $\widehat s_0:=\widehat s_0(T)>0$ such that for any given   source terms $g_i\in L^2(0,T; H^1_0(0,L))$ for $i=1,2,3,4$, and $(\zeta_0,\theta_0) \in [L^2(0,L)]^2$, the solution to \eqref{adj-extended-1}--\eqref{adj-extended-2} satisfies     
	\begin{equation}\label{Carleman-main-adjoint}
		\begin{aligned}
			&	I(\eta,s) + I(\psi, s) + I(\zeta, s) + I(\theta, s) \\
			& \leq 
			 C s^5 \iint_{Q_T} e^{-12s\vphi^*+ 10s\widehat \vphi} \xi^{13} \big(|g_{1,x}|^2 + |g_{2,x}|^2 + |g_{3,x}|^2 + |g_{4,x}|^2  \big) \\
			& \ \  + C s^{25} \int_0^T \int_{\omega_0} e^{-36s\vphi^* + 34s\widehat \vphi} \xi^{57} |\eta|^2 + C s^{25} \int_0^T \int_{\omega_0} e^{-36s\vphi^* + 34s\widehat \vphi} \xi^{57} |\psi|^2 ,
		\end{aligned}
	\end{equation}
for all $s\geq \widehat s_0$, where $I(\cdot, s)$ has been defined in \eqref{notation-I}. 
\end{theorem}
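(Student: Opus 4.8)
The plan is to obtain \eqref{Carleman-main-adjoint} by applying the single-equation estimate \eqref{carleman-refined} (with $\nu=1$) separately to each of the four components $\eta,\psi,\zeta,\theta$, and then removing two kinds of unwanted contributions: the couplings $\zeta\mathds 1_{\oo},\theta\mathds 1_{\oo}$, and the local observations of the two \emph{unobserved} variables $\zeta$ and $\theta$. First I would apply \eqref{carleman-refined} to each equation. The components $\eta,\psi$ already solve backward KdV equations with the admissible boundary conditions of \eqref{adj-gen}--\eqref{bdry-gen-2}, with right-hand sides $\zeta\mathds 1_{\oo}+g_1$ and $\theta\mathds 1_{\oo}+g_2$ (the harmless leading factor $\tfrac12$ being absorbed into the constants); the components $\zeta,\theta$ solve the free \emph{forward} KdV equations with sources $g_3,g_4$, and since the weight $\vphi$ is invariant under $t\mapsto T-t$, the estimate applies to them after time reversal. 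Summing the four inequalities yields on the right-hand side the global source integrals, the two coupling integrals, and the four local integrals $s^5\int_0^T\int_{\omega_0}e^{-6s\vphi^*+4s\widehat\vphi}\xi^{9}|\cdot|^2$.

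The local terms in $\eta$ and $\psi$ are admissible and kept; everything involving $\zeta,\theta$ must be eliminated. The global coupling integrals are straightforward: since $\mathds 1_{\oo}\le 1$ and every term originating from $\zeta$ (resp.\ $\theta$) carries a strictly lower power of $s\xi$ than the dominant $s^5\iint e^{-2s\vphi}\xi^5|\zeta|^2$, and a lower Sobolev order than the $\xi^{-3}\|\zeta\|_{H^3}^2$ term already contained in $I(\zeta,s)$, they are bounded by $\varepsilon\big(I(\zeta,s)+I(\theta,s)\big)$; using that $\xi$ is bounded below and $\widehat\vphi\ge\vphi^*$, one fixes $\varepsilon$ by Young's inequality and then enlarges $s$.

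The heart of the argument, and the main obstacle, is the removal of the two local terms in $\zeta,\theta$. Here I would use crucially that $\omega_0\subset\subset\oo\cap\omega$, so that $\mathds 1_{\oo}\equiv 1$ near $\omega_0$ and the $\eta$- and $\psi$-equations read $\zeta=-\eta_t+\tfrac12\eta_{xxx}-g_1$ and $\theta=-\psi_t-\psi_{xxx}-g_2$ there. Introducing a cutoff $\chi\in C_c^\infty(\oo\cap\omega)$ with $\chi\equiv 1$ on $\omega_0$, I would substitute these identities into one factor of $|\zeta|^2$, $|\theta|^2$ and integrate by parts repeatedly: the time derivative is moved off $\eta$ (resp.\ $\psi$), the resulting $\zeta_t$ (resp.\ $\theta_t$) is converted into $\zeta_{xxx}$ (resp.\ $\theta_{xxx}$) through the $\zeta$- (resp.\ $\theta$-) equation, and the space derivatives are then redistributed by further integrations by parts; all boundary terms vanish because $\chi$ is compactly supported in $(0,L)$ and the weights decay at $t=0,T$. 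The delicate point is to arrange these manipulations so that (i) every \emph{global} term produced is a weighted integral of $\zeta,\zeta_x,\zeta_{xx}$ (resp.\ $\theta,\dots$), hence absorbable together with the coupling terms of the previous step by a small multiple of $I(\zeta,s)+I(\theta,s)$; (ii) the intermediate \emph{local} derivatives $\eta_x,\eta_{xx}$ (resp.\ $\psi_x,\psi_{xx}$) are brought back to $|\eta|^2$ (resp.\ $|\psi|^2$) by Caccioppoli-type local inequalities; and (iii) the bookkeeping of the Young inequalities is carried through while tracking the inflation of the weight, which raises the prefactor from $s^5\xi^{9}$ to $s^{25}\xi^{57}$ and turns $e^{-6s\vphi^*+4s\widehat\vphi}$ into $e^{-36s\vphi^*+34s\widehat\vphi}$.

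Finally I would consolidate the source contributions. Using $g_i\in H^1_0(0,L)$, the Poincaré inequality, and a pointwise comparison of all the weights produced along the way (which are dominated by $s^5 e^{-12s\vphi^*+10s\widehat\vphi}\xi^{13}$ for $s$ large, since $\widehat\vphi-\vphi^*=c\,\xi$ with $c>0$ so that the exponential gaps are controlled by powers of $\xi$), the right-hand side sources collapse to the single term $s^5\iint_{Q_T}e^{-12s\vphi^*+10s\widehat\vphi}\xi^{13}\big(|g_{1,x}|^2+|g_{2,x}|^2+|g_{3,x}|^2+|g_{4,x}|^2\big)$, while the local $\eta,\psi$ integrals merge into the stated $s^{25}\xi^{57}$ form. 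Choosing $s\ge\widehat s_0$ large enough to absorb all the $\varepsilon$-terms into the left-hand side then yields \eqref{Carleman-main-adjoint}.
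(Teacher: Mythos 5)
Your architecture is the same as the paper's: apply the enhanced estimate \eqref{carleman-refined} (i.e.\ \Cref{prop-carleman-regular} with $\nu=1$) to each of $\eta,\psi,\zeta,\theta$; absorb the global coupling integrals of $\zeta,\theta,\zeta_x,\theta_x$ into the left-hand side by raising powers of $\xi$ (via $1\le 2T^2\xi$) and enlarging $s$; then remove the local observations of $\zeta,\theta$ by substituting $\zeta=-\eta_t+\tfrac12\eta_{xxx}-g_1$ and $\theta=-\psi_t-\psi_{xxx}-g_2$ on the support of a cutoff inside $\oo$, integrating by parts in $t$ and $x$, and converting $\zeta_t,\theta_t$ through \eqref{adj-extended-2}. (A cosmetic difference: the paper first passes to a smaller observation set $\widehat\omega\subset\subset\omega_0$ and takes the cutoff $\phi\in\C^\infty_c(\omega_0)$, so the final observation is genuinely over $\omega_0$ as stated; your cutoff over a set containing $\omega_0$ would yield the estimate with a slightly larger observation region, which is harmless only after relabelling.)

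The genuine gap is your item (ii). The residual local terms $s^9\int_0^T\int_{\omega_0}e^{-12s\vphi^*+10s\widehat\vphi}\xi^{17}\left(|\eta_x|^2+|\eta_{xx}|^2\right)$ (and the $\psi$ analogue) cannot be reduced to $|\eta|^2$ by a ``Caccioppoli-type local inequality''. The operator $\partial_t\mp\tfrac12\partial_x^3$ is dispersive, not dissipative: multiplying by $\chi^2\eta$ and integrating by parts in $x$ produces $3\iint\chi\chi_x|\eta_x|^2$ plus lower-order terms, and $\chi\chi_x$ changes sign on the support of any cutoff, so no coercive local gradient term appears; the Kato-smoothing variant (multiplier $a(x)\eta$ with $a$ monotone) only controls local derivatives by \emph{global} $L^2$ quantities, and iterating it to reach $\eta_{xx}$ raises further unresolved absorption issues. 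The paper's mechanism at this exact point is interpolation: $\|\eta\|^2_{H^2(\omega_0)}\le\|\eta\|^{4/3}_{H^3(\omega_0)}\|\eta\|^{2/3}_{L^2(\omega_0)}$ followed by Young's inequality with exponents $(3/2,3)$, the $H^3$ factor being absorbed by the term $s\int_0^Te^{-2s\widehat\vphi}\xi^{-3}\|\eta\|^2_{H^3(0,L)}$ on the left-hand side --- which is present in $I(\eta,s)$ precisely because \eqref{notation-I} was built from the bootstrap-enhanced estimate of \Cref{prop-carleman-regular}, and which is also where the hypothesis $g_i\in L^2(0,T;H^1_0(0,L))$ is used (see \eqref{estimate-eta-x}--\eqref{estimate-psi-x}). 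Moreover, the exponents in the conclusion are not generic bookkeeping: applying Young to $s^9\xi^{17}e^{-12s\vphi^*+10s\widehat\vphi}\|\eta\|^{4/3}_{H^3}\|\eta\|^{2/3}_{L^2}$ with the $H^3$ part weighted as above forces exactly $C_\varepsilon s^{25}\xi^{57}e^{-36s\vphi^*+34s\widehat\vphi}|\eta|^2$ on the observation side, so any correct completion of your argument must pass through this interpolation step (or an equivalent), not through a local energy estimate.
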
	
	

\begin{proof}
	We apply \Cref{prop-carleman-regular}, more specifically the estimate \eqref{carleman-refined} to each of the states $\eta$, $\psi$, $\zeta$ and $\theta$ of the adjoint system \eqref{adj-extended-1}--\eqref{adj-extended-2}. 
	
	In what follows, $\eta$ satisfies
  \begin{equation}\label{est-eta}
  	\begin{aligned}
  		I(\eta, s) & \leq 
  		C s^3 \iint_{Q_T} e^{-2s\vphi} \xi \big(|\zeta|^2 + |g_1|^2\big)   
  		+ C s\int_0^T e^{-2s\widehat \vphi} \xi^{-3} \big( \|\zeta\|^2_{H^1(0,L)} + \|g_1\|^2_{H^{1}(0,L)}  \big) \\
  		& \qquad +C s^5 \int_0^T   \int_{\omega_0}  e^{-6s  \vphi^* + 4s\widehat \vphi  }   \,  \xi^{9}  |\eta|^2  \\
  		& 
  		\leq 
  	C s^3 \iint_{Q_T} e^{-2s\vphi} \xi \big(|\zeta|^2 + |g_1|^2\big)   
  		+ C s\iint_{Q_T} e^{-2s\widehat \vphi} \xi^{-3} \big( |\zeta_x|^2 + |g_{1,x}|^2  \big) \\
  	& \qquad + C s^5 \int_0^T   \int_{\omega_0}  e^{-6s  \vphi^* + 4s\widehat \vphi  }   \,  \xi^{9}  |\eta|^2 ,
  	\end{aligned}
  \end{equation}
for all $s\geq s_0$, where we have used the Poincar\'{e} inequality since we  have  $\zeta \in L^2(0,T;H^1_0(0,L))$ and  the source term $g_1$ belongs to  $L^2(0,T; H^1_0(0,L))$. 
	
\medskip

Similarly, $\psi$, $\zeta$ and $\theta$ satisfy
\begin{align}
			\label{est-psi}
	&	I(\psi, s) 
		 \leq 	C s^3 \iint_{Q_T} e^{-2s\vphi} \xi \big(|\theta|^2 + |g_2|^2\big)   
		+ C s\iint_{Q_T} e^{-2s\widehat \vphi} \xi^{-3} \big( |\theta_x|^2 + |g_{2,x}|^2  \big)   \notag  \\
		& \qquad \qquad \qquad \qquad + C s^5 \int_0^T   \int_{\omega_0}  e^{-6s  \vphi^* + 4s\widehat \vphi  }   \,  \xi^{9}  |\psi|^2 , \\
		\label{est-zeta}
		&	I(\zeta, s) 
			 \leq 	C s^3 \iint_{Q_T} e^{-2s\vphi} \xi |g_3|^2  
			+ C s\iint_{Q_T} e^{-2s\widehat \vphi} \xi^{-3}
			|g_{3,x}|^2  
			+ C s^5 \int_0^T   \int_{\omega_0}  e^{-6s  \vphi^* + 4s\widehat \vphi  }   \,  \xi^{9}  |\zeta|^2 ,  
			 \\
		\label{est-theta}
	&	I(\theta, s) 
	\leq 	C s^3 \iint_{Q_T} e^{-2s\vphi} \xi |g_4|^2 
	+ C s\iint_{Q_T} e^{-2s\widehat \vphi} \xi^{-3}
	|g_{4,x}|^2  
	+ C s^5 \int_0^T   \int_{\omega_0}  e^{-6s  \vphi^* + 4s\widehat \vphi  }   \,  \xi^{9}  |\theta|^2 		,
	\end{align}
	for all $s\geq s_0$.

	\medskip 
	
{\bf I.} {\underline{\em Absorbing the lower order terms.}}
	Thanks to the fact $1\leq 2 T^2 \xi$ and by adding \eqref{est-eta}, \eqref{est-psi}, \eqref{est-zeta} and \eqref{est-theta}, we obtain 
	
	\begin{equation}\label{carleman-aux-1} 
		\begin{aligned}
		&	I(\eta,s) + I(\psi, s) + I(\zeta, s) + I(\theta, s) \\
		&\leq  C s^3 \iint_{Q_T} e^{-2s\vphi} \xi \big(|g_1|^2 + |g_2|^2 + |g_3|^2 + |g_4|^2  \big)  \\
		&  \ + C s \iint_{Q_T} e^{-2s\widehat \vphi} \xi^{-3}\big(|g_{1,x}|^2 + |g_{2,x}|^2 + |g_{3,x}|^2 + |g_{4,x}|^2  \big) + C s^3 T^8 \iint_{Q_T} e^{-2s\vphi} \xi^5 \big(|\zeta|^2 + |\theta|^2  \big)  \\
		& \ +  C s T^{12} \iint_{Q_T} e^{-2s\vphi} \xi^{3} \big(|\zeta_x|^2 + |\theta_x|^2 \big) 
	+ C s^5 \int_0^T   \int_{\omega_0}  e^{-6s  \vphi^* + 4s\widehat \vphi  }   \,  \xi^{9}  \big(|\eta|^2+ |\psi|^2 + |\zeta|^2 + |\theta|^2 \big) .
		\end{aligned}
	\end{equation} 
Then there is some $c_0>0$ such that 	by taking $s\geq \widehat s_0:= c_0(T^4+T^6)$, we can absorb the integrals $C s^3 T^8 \iint_{Q_T} e^{-2s\vphi} \xi^5 \big(|\zeta|^2 + |\theta|^2  \big)$ and $C s T^{12} \iint_{Q_T} e^{-2s\vphi} \xi^5 \big(|\zeta_x|^2 + |\theta_x|^2  \big)$ by the corresponding leading terms in the left hand side of \eqref{carleman-aux-1}. 
	
	\smallskip 
	

	Recall that $g_i\in L^2(0,T; H^1_0(0,L))$ for $i\in \{1,2,3,4\}$,  and thus by employing the Poincar\'{e} inequality, we get
	\begin{align*}
		\iint_{Q_T} e^{-2s\vphi} \xi |g_i|^2 \leq  	\iint_{Q_T} e^{-2s\vphi^*} \xi |g_i|^2 \leq  C	\iint_{Q_T} e^{-2s\vphi^*} \xi |g_{i,x}|^2 .
	\end{align*}

		\smallskip 
	
	As a consequence of the above information, the inequality \eqref{carleman-aux-1}  now reduces to

	\begin{equation}\label{carleman-aux-2}
	\begin{aligned}
		&	I(\eta,s) + I(\psi, s) + I(\zeta, s) + I(\theta, s) \\
		&\leq  C s^3 \iint_{Q_T} e^{-2s\vphi^*} \xi \big(|g_{1,x}|^2 + |g_{2,x}|^2 + |g_{3,x}|^2 + |g_{4,x}|^2  \big) \\
		& \ \ + C s^5 \int_0^T   \int_{\omega_0}  e^{-6s  \vphi^* + 4s\widehat \vphi  }    \xi^{9}  \big(|\eta|^2+ |\psi|^2 + |\zeta|^2 + |\theta|^2 \big) ,
		\end{aligned}
	\end{equation}
	for all $s\geq \widehat s_0$. 
	
	\medskip 
	{\bf II.} {\underline{\em Absorbing the unusual observation terms.}} In this part, we eliminate the observation integrals associated to $\zeta$ and $\theta$.
	
	\medskip 
	
	$\bullet$ Consider a nonempty open set  $\widehat \omega \subset \subset \omega_0$ and a function
	$\phi \in\C^\infty_c(\omega_0)$ such that $0\leq \phi \leq 1$ in $\omega_0$ and $\phi=1$ in $\widehat \omega$. In fact, one can obtain the estimate \eqref{carleman-aux-2} with the observation domain $\widehat \omega$. From the equation   of $\eta$, that is \eqref{adj-extended-1}$_1$,  we have 
	\begin{align*}
		\zeta = -\eta_t + \frac{1}{2} \eta_{xxx} - g_1 \quad \ \text{in  $\mathcal O$ }  \text{ (consequently in $\omega_0$)} ,
	\end{align*}
	which yields 
	\begin{equation}\label{observation-zeta}
		\begin{aligned}
	s^5 \int_0^T \int_{\widehat \omega}  e^{-6s  \vphi^* + 4s\widehat \vphi  }    \xi^{9} |\zeta|^2  
	&\leq s^5 \int_0^T \int_{\omega_0}  \phi  e^{-6s  \vphi^* + 4s\widehat \vphi  }   \,  \xi^{9} \zeta \left(-\eta_t + \frac{1}{2}\eta_{xxx} - g_1   \right) \\
	& := J_1 + J_2 + J_3 .
		\end{aligned}
	\end{equation}
	
We now  estimate the terms $J_1, J_2$ and  $J_3$. 

\medskip 

(i) {\underline{\em Estimate of $J_1$}}.
Integrating by parts with respect to time $t$, the term $J_1$ becomes 
\begin{equation}\label{term-J-1}
	\begin{aligned}
		J_1 &= s^5 \int_0^T \int_{\omega_0} \phi \big(e^{-6s  \vphi^* + 4s\widehat \vphi  }   \,  \xi^{9} \big)_t  \zeta \eta + s^5 \int_0^T \int_{\omega_0} \phi e^{-6s  \vphi^* + 4s\widehat \vphi  }     \xi^{9}   \zeta_t \eta \\
		& := J_{1,1} + J_{1,2}. 
	\end{aligned}
\end{equation}
Using the fact 
\begin{align}
 \left| \big(e^{-6s  \vphi^* + 4s\widehat \vphi  }     \xi^{9} \big)_t  \right| \leq C T s e^{-6s  \vphi^* + 4s\widehat \vphi  }    \xi^{11}  ,
\end{align}
and the Young's inequality, we first deduce that 
\begin{equation}\label{esti-J-11}
	\begin{aligned}
		|J_{1,1}| & \leq C T s^6 \int_0^T \int_{\omega_0} e^{-6s  \vphi^* + 4s\widehat \vphi  }  \xi^{11} |\zeta \eta | \\
		& \leq \epsilon s^5 \iint_{Q_T} e^{-2s\vphi} \xi^5 |\zeta|^2 + C_\epsilon s^7 \int_0^T \int_{\omega_0} e^{-12s\vphi^* + 10 s\widehat \vphi} \xi^{17} |\eta|^2 
		\end{aligned}
\end{equation}
for given $\epsilon>0$.  

\smallskip 
We now recall the equation of $\zeta$ from \eqref{adj-extended-2}$_1$; by performing an  integrating by parts in space, we have   
\begin{equation}\label{term-J-12}
	\begin{aligned}
		J_{1,2} &=
		    s^5 \int_0^T \int_{\omega_0} \phi e^{-6s  \vphi^* + 4s\widehat \vphi  }  \xi^{9}  \eta \left(\frac{1}{2}\zeta_{xxx} + g_3  \right)   \\
		&= - \frac{1}{2}s^5 \int_0^T \int_{\omega_0} \phi  e^{-6s  \vphi^* + 4s\widehat \vphi  }  \xi^{9}  \eta_x \zeta_{xx} 
		- \frac{1}{2}s^5 \int_0^T \int_{\omega_0} \phi_x  e^{-6s  \vphi^* + 4s\widehat \vphi  }  \xi^{9}  \eta \zeta_{xx}  \\
& \qquad 	+	s^5 \int_0^T \int_{\omega_0} \phi  e^{-6s  \vphi^* + 4s\widehat \vphi  }  \xi^{9}  \eta g_3 \\
	& : = J^{1}_{1,2} + J^2_{1,2} + J^3_{1,2} .	
	\end{aligned}
\end{equation}
Using the Young's inequality, we  compute
\begin{align}\label{esti-J-12-1}
	&|J^1_{1,2}| \leq \epsilon s \iint_{Q_T}  e^{-2s\vphi} \xi |\zeta_{xx}|^2 + C_\epsilon s^9 \int_0^T \int_{\omega_0} e^{-12s\vphi^* + 10s\widehat \vphi} \xi^{17} |\eta_{x}|^2 
\\ 
\label{esti-J-12-2}
&|	J^2_{1,2} | 
\leq \epsilon s \iint_{Q_T} e^{-2s\vphi} \xi |\zeta_{xx}|^2 + C_\epsilon  s^9  \int_0^T \int_{\omega_0} e^{-12 s \vphi^* + 10 s\widehat \vphi} \xi^{17} |\eta|^2 ,       
\end{align} 
for any given $\epsilon>0$. 

We also have 
\begin{align}\label{esti-J-12-3}
		|	J^3_{1,2} | 
		\leq  \epsilon s^5 \iint_{Q_T} e^{-2s\vphi} \xi^5 |\eta|^2 +   C_\epsilon s^5  \iint_{Q_T} e^{-12 s \vphi^* + 10 s\widehat \vphi} \xi^{13} |g_3|^2 ,       
\end{align} 
for any $\epsilon>0$.

Combining \eqref{esti-J-11}, \eqref{esti-J-12-1}, \eqref{esti-J-12-2}, \eqref{esti-J-12-3}   in \eqref{term-J-1}, we obtain
\begin{equation}\label{estimate-J-1}
	\begin{aligned}
		|J_1| & \leq \epsilon s^5 \iint_{Q_T} e^{-2s\vphi} \xi^5 |\zeta|^2 + 2\epsilon s \iint_{Q_T} e^{-2s\vphi} \xi |\zeta_{xx}|^2  +   \epsilon s^5 \iint_{Q_T} e^{-2s\vphi} \xi^5 |\eta|^2  \\
		& \ \ + C_\epsilon s^9 \int_0^T \int_{\omega_0} e^{-12s\vphi^* + 10 s\widehat \vphi} \xi^{17} \left( |\eta|^2 +|\eta_x|^2 \right) + C s^5 \iint_{Q_T} e^{-12 s \vphi^* + 10 s\widehat \vphi} \xi^{13} |g_3|^2 .
	\end{aligned}
\end{equation}

\medskip 

(ii) {\underline{\em Estimate of $J_2$}}. We recall the term $J_2$ from \eqref{observation-zeta}. Integration by parts in space, we get 
\begin{equation}
	\begin{aligned}
	J_2 = - \frac{1}{2}s^5 \int_{0}^T \int_{\omega_0} \phi e^{-6s\vphi^* + 4s\widehat \vphi} \xi^9 \zeta_x \eta_{xx} - \frac{1}{2}s^5 \int_{0}^T \int_{\omega_0} \phi_x e^{-6s\vphi^* + 4s\widehat \vphi} \xi^9 \zeta \eta_{xx} ,
	\end{aligned}
\end{equation} 
and then by applying the  Young's inequality we have 
\begin{equation}\label{estimate-J-2}
	\begin{aligned}
		|J_2| \leq \epsilon s^3 \iint_{Q_T} e^{-2s\vphi} \xi^3 |\zeta_x|^2 +  \epsilon s^5 \iint_{Q_T} e^{-2s\vphi} \xi^5 |\zeta|^2 +
		C_\epsilon  s^7 \int_0^T \int_{\omega_0} e^{-12s\vphi^* + 10s\widehat \vphi} \xi^{15} |\eta_{xx}|^2 ,
	\end{aligned}
	\end{equation}
for given $\epsilon>0$.

\medskip 

(iii) {\underline{\em Estimate of $J_3$}}. Finally, the term $J_3$ can be estimated as follows: 
\begin{align}\label{estimate-J-3}
	|J_3| \leq  \epsilon s^5 \iint_{Q_T} e^{-2s\vphi} \xi^5 |\zeta|^2 + C_\epsilon s^5 \iint_{Q_T} e^{-12s\vphi^* + 10s\widehat \vphi} \xi^{13} |g_1|^2 . 
\end{align}

Combining   the estimates of $J_1$, $J_2$ and $J_3$ obtained in \eqref{estimate-J-1}, \eqref{estimate-J-2} and \eqref{estimate-J-3} (respectively), we acquire that the observation integral  \eqref{observation-zeta} of $\zeta$  satisfies:
\begin{equation}\label{esti-middle-zeta}
	\begin{aligned}
		&	s^5 \int_0^T \int_{\widehat \omega}  e^{-6s  \vphi^* + 4s\widehat \vphi  }    \xi^{9} |\zeta|^2  \\ 
	&	\leq  \epsilon s^5 \iint_{Q_T} e^{-2s\vphi} \xi^5 \left(3 |\zeta|^2 + |\eta|^2 \right)  + \epsilon s^3 \iint_{Q_T} e^{-2s\vphi} \xi^3 |\zeta_x|^2 + 2\epsilon s\iint_{Q_T} e^{-2s\vphi} \xi |\zeta_{xx}|^2 \\
	&  \quad  + C_\epsilon s^5  \iint_{Q_T} e^{-12s\vphi^* + 10s\widehat \vphi} \xi^{13} \left(|g_1|^2 + |g_3|^2 \right)   + 
	C_\epsilon s^9 \int_0^T \int_{\omega_0} e^{-12s\vphi^* + 10 s\widehat \vphi} \xi^{17}  |\eta|^2  \\
	& \quad  + C_\epsilon s^9 \int_0^T \int_{\omega_0} e^{-12s\vphi^* + 10 s\widehat \vphi} \xi^{17} \left( |\eta_x|^2 + |\eta_{xx}|^2  \right),
	\end{aligned}
\end{equation}  
for given $\epsilon>0$.

\medskip

$\bullet$ We apply similar process to handle the observation integral of $\theta$. By expressing 
\begin{align*}
	\theta = -\psi_t - \psi_{xxx} - g_2 \quad \text{in } \mathcal O \ \ (\text{consequently in } \omega_0) 
\end{align*}
from the equation \eqref{adj-extended-1}$_2$, and following the same steps as previous, one can achieve 
\begin{equation}\label{esti-middle-theta}
	\begin{aligned}
		&	s^5 \int_0^T \int_{\widehat \omega}  e^{-6s  \vphi^* + 4s\widehat \vphi  }    \xi^{9} |\theta|^2  \\ 
		&	\leq  \epsilon s^5 \iint_{Q_T} e^{-2s\vphi} \xi^5 \left(3 |\theta|^2 + |\psi|^2 \right)  + \epsilon s^3 \iint_{Q_T} e^{-2s\vphi} \xi^3 |\theta_x|^2 + 2\epsilon s\iint_{Q_T} e^{-2s\vphi} \xi |\theta_{xx}|^2 \\
		&  \quad  + C_\epsilon s^5  \iint_{Q_T} e^{-12s\vphi^* + 10s\widehat \vphi} \xi^{13} \left(|g_2|^2 + |g_4|^2 \right)   + 
		C_\epsilon s^9 \int_0^T \int_{\omega_0} e^{-12s\vphi^* + 10 s\widehat \vphi} \xi^{17}  |\psi|^2  \\
		& \quad  + C_\epsilon s^9 \int_0^T \int_{\omega_0} e^{-12s\vphi^* + 10 s\widehat \vphi} \xi^{17} \left( |\psi_x|^2 + |\psi_{xx}|^2  \right),
	\end{aligned}
\end{equation}  
for any $\epsilon>0$. 

\smallskip 

Fix $\epsilon>0$ small enough in \eqref{esti-middle-zeta} and  \eqref{esti-middle-theta}, so that the integrals  of $\zeta$, $\eta$, $\zeta_x$, $\zeta_{xx}$, $\theta$, $\psi$, $\theta_x$ and $\theta_{xx}$ in $Q_T$ can be absorbed by the associated leading integrals in the left hand side of \eqref{carleman-aux-2}. 

\medskip 

$\bullet$ Now, it remains to find  proper estimates for the observation integrals concerning  $\eta_x$, $\eta_{xx}$ in \eqref{esti-middle-zeta}, and $\psi_x$, $\psi_{xx}$ in \eqref{esti-middle-theta}. 
Indeed, writing the space $H^2(\omega_0)$ as an interpolation between $H^3(\omega_0)$ and $L^2(\omega_0)$,  we find that 
\begin{equation}\label{estimate-eta-x} 
	\begin{aligned}
	& s^9 \int_0^T \int_{\omega_0} e^{-12s\vphi^* + 10 s\widehat \vphi} \xi^{17} \left( |\eta_x|^2 + |\eta_{xx}|^2  \right) \\
	&\leq s^9 \int_0^T  e^{-12s\vphi^* + 10 s\widehat \vphi} \xi^{17} \|\eta\|^2_{H^2(\omega_0)} \\
	& \leq s^9 \int_0^T e^{-12s\vphi^* + 10 s\widehat \vphi} \xi^{17} \| \eta\|^{\frac{4}{3}}_{H^3(\omega_0)} \| \eta\|^{\frac{2}{3}}_{L^2(\omega_0)}  \\
	& \leq \varepsilon s \int_0^T e^{-2s\widehat \vphi} \xi^{-3} \|\eta\|^2_{H^3(0,L)} + C_\varepsilon s^{25} \int_0^T \int_{\omega_0} e^{-36s\vphi^* + 34s\widehat \vphi} \xi^{57} |\eta|^2 ,
	\end{aligned}
	\end{equation}
for any given  $\varepsilon>0$.

Similar technique will provide
\begin{equation}\label{estimate-psi-x} 
	\begin{aligned}
		& s^9 \int_0^T \int_{\omega_0} e^{-12s\vphi^* + 10 s\widehat \vphi} \xi^{17} \left( |\psi_x|^2 + |\psi_{xx}|^2  \right) \\
		& \leq \varepsilon s \int_0^T e^{-2s\widehat \vphi} \xi^{-3} \|\psi\|^2_{H^3(0,L)} + C_\varepsilon s^{25} \int_0^T \int_{\omega_0} e^{-36s\vphi^* + 34s\widehat \vphi} \xi^{57} |\psi|^2 .
 \end{aligned}
\end{equation}

Then, a proper choice of $\varepsilon>0$ helps to get rid of the first integrals in the right hand sides of \eqref{estimate-eta-x} and \eqref{estimate-psi-x} by means of the leading terms $s \int_0^T e^{-2s\widehat \vphi} \xi^{-3} \|\eta\|^2_{H^3(0,L)}$  and $s \int_0^T e^{-2s\widehat \vphi} \xi^{-3} \|\psi\|^2_{H^3(0,L)}$ in the left hand side of \eqref{carleman-aux-2}.

\smallskip

 As a consequence of the above analysis, the estimate \eqref{carleman-aux-2} boils down to the following:
 	\begin{equation}\label{carleman-aux-3}
 	\begin{aligned}
 		&	I(\eta,s) + I(\psi, s) + I(\zeta, s) + I(\theta, s) \\
 		& \leq  C s^3 \iint_{Q_T} e^{-2s\vphi^*} \xi \big(|g_{1,x}|^2 + |g_{2,x}|^2 + |g_{3,x}|^2 + |g_{4,x}|^2  \big) \\
& \ \ + C s^5 \int_{Q_T} e^{-12 s\vphi^* + 10 s\widehat \vphi} \xi^{13}\left(|g_1|^2 + |g_2|^2 + |g_3|^2 + |g_4|^2 \right) \\
& \ \  + C s^{25} \int_0^T \int_{\omega_0} e^{-36s\vphi^* + 34s\widehat \vphi} \xi^{57} |\eta|^2 + C s^{25} \int_0^T \int_{\omega_0} e^{-36s\vphi^* + 34s\widehat \vphi} \xi^{57} |\psi|^2 ,
 	\end{aligned}
 \end{equation}
 for all $s\geq \widehat s_0$. To this end, due to  the choices of $g_i \in L^2(0,T; H^1_0(0,L))$ for $i\in \{1,2,3,4\}$, and the definitions of $\vphi^*, \widehat \vphi$ given by \eqref{Weight-min-max}, we can easily conclude the required Carleman estimate \eqref{Carleman-main-adjoint}  for the adjoint system \eqref{adj-extended-1}--\eqref{adj-extended-2}. 
\end{proof}	
	
	\smallskip 
	
	\section{Null-controllability of the extended linearized system}\label{Section-local-null}

	 In this section, we establish the global null-controllability  of the extended linearized system \eqref{sys_linear-1}--\eqref{sys_linear-2}. As we have mentioned earlier, the main ingredient to prove the result is to first obtain a suitable observability inequality for the adjoint system \eqref{adj-extended-1}--\eqref{adj-extended-2}, and we do this in the following subsection.

	\subsection{Observability inequality}\label{Section-obser}  
	

	Let us first construct some modified Carleman weights (from the existing ones \eqref{Weight-1}--\eqref{Weight-2}) that do not vanish at $t=T$. We consider 
\begin{equation}\label{xi-new}
	\mathfrak{Z}(t)=
	\begin{dcases}
		\frac{1}{t(T-t)}, \ & 0 < t\leq  T/2, \\
		\frac{4}{T^2},  \ &T/2 < t\leq T,
	\end{dcases}
\end{equation}
	and 
	the weight function 
	\begin{align}\label{vphi-new}
		\mathfrak{S}(t,x) = \mathfrak{Z}(t) \beta(x) , \quad \forall (t,x) \in (0,T] \times [0,L] , 
	\end{align}
where the function $\beta$ is introduced  by \eqref{Weight-1}.

We further define 
\begin{equation}\label{weights-new} 
\begin{aligned}
&	\mathfrak{S}^*(t)	 =	\min_{[0,L]} 	\mathfrak{S}(t,x) = \mathfrak{Z}(t)\beta(l_{1/2}) , \qquad \forall t\in (0,T) \\
& 	\widehat{\mathfrak{S}}(t) =  \max_{[0,L]}	\mathfrak{S}(t,x)  =  \mathfrak{Z}(t) \big( \max\{ \beta(0), \beta(L)\} \big)  , \quad \forall t\in (0,T) .
\end{aligned}
\end{equation}  
	
	\begin{remark}\label{remark-weights-equivalent}
	Recall the former weight functins $\xi$, $\vphi$, and $\vphi^*$, $\widehat \vphi$ from \eqref{Weight-1}, \eqref{Weight-2} and \eqref{Weight-min-max} respectively. Then,  by construction we observe that 
		\begin{align*}
			\xi(t) = \mathfrak{Z}(t) , \ \ \text{in } (0,T/2]  , \quad \mathfrak{S}(t,x) = \vphi(t,x), \ \ \ \text{in } (0,T/2] \times [0,L] ,
			\end{align*}
	and further, 
	\begin{align*} 
			\mathfrak{S}^*(t) = \vphi^*(t) , \quad \widehat{\mathfrak{S}}(t) = \widehat \vphi(t), \quad \text{in } (0,T/2] . 
		\end{align*}
	\end{remark}

	With all these, we derive the following observability inequality associated to the adjoint system \eqref{adj-extended-1}--\eqref{adj-extended-2}. 
	
	\begin{proposition}\label{Prop-obs-ineq}
		Let $s$ be fixed in accordance with \Cref{Carleman-main}.  Then, there exists some constant $C>0$ that at most depends on $s$, $T$, $\omega$ and $\oo$  such that for any given source terms $g_i \in L^2(0,T; H^1_0(0,L))$ for $i=1,2,3,4$ and $(\zeta_0, \theta_0)\in [L^2(0,L)]^2$, the solution to \eqref{adj-extended-1}--\eqref{adj-extended-2} satisfies the   following estimate 
		\begin{equation}\label{Observ-main} 
			\begin{aligned}
				&	\|\zeta(T)\|^2_{L^2(0,L)} + \|\theta(T)\|^2_{L^2(0,L)} 
				+ \|e^{-s\widehat{\mathfrak{S}}} \mathfrak{Z}^{1/2} (\eta, \psi, \zeta, \theta) \|^2_{[L^\infty(0,T; L^2(0,L))]^4} 
				\\
		%
&	\quad + \iint_{Q_T} e^{-2 s\widehat{\mathfrak{S}}} \mathfrak{Z}^3 \left( |\eta_x|^2 + |\psi_x|^2 + |\zeta_x|^2 + |\theta_x|^2 
				\right) \\	
				&\leq 	 C  \iint_{Q_T} e^{-12s\mathfrak{S}^*+ 10s\widehat{\mathfrak{S}} } \mathfrak{Z}^{13} \big(|g_{1,x}|^2 + |g_{2,x}|^2 + |g_{3,x}|^2 + |g_{4,x}|^2  \big) \\
				& \quad    + C \int_0^T \int_{\omega_0} e^{-36s \mathfrak{S}^* + 34s \widehat{\mathfrak{S}}} \mathfrak{Z}^{57} \left(|\eta|^2 + |\psi|^2 \right). 
			\end{aligned}
		\end{equation}

	\end{proposition}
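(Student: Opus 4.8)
The plan is to derive the observability inequality \eqref{Observ-main} from the main Carleman estimate \eqref{Carleman-main-adjoint} by splitting the time interval into $(0,T/2]$, where by \Cref{remark-weights-equivalent} the modified weights $\mathfrak{Z},\mathfrak{S},\mathfrak{S}^*,\widehat{\mathfrak{S}}$ coincide with the original Carleman weights $\xi,\varphi,\varphi^*,\widehat\varphi$, and $[T/2,T]$, where the modified weights have been flattened to constants and therefore carry no Carleman information. On the first part everything will follow directly from \eqref{Carleman-main-adjoint}; on the second part all the information must be reconstructed by KdV energy and smoothing estimates, transported from an intermediate time and from $t=T$.

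The first ingredient is a middle-time bound. Since $\xi$ is bounded above and below by positive constants on $[T/4,3T/4]$ and $s$ is fixed, the weight $e^{-2s\varphi}\xi^5$ is bounded below there by a constant depending only on $s,T,L$; hence restricting the leading terms of $I(\zeta,s)+I(\theta,s)$ in \eqref{Carleman-main-adjoint} to $[T/4,T/2]$ controls $\int_{T/4}^{T/2}(\|\zeta\|^2_{L^2}+\|\theta\|^2_{L^2})$ by the right-hand side of \eqref{Carleman-main-adjoint}. By the mean value theorem I then pick an intermediate time $t_0\in[T/4,T/2]$ for which $\|\zeta(t_0)\|^2_{L^2}+\|\theta(t_0)\|^2_{L^2}$ is bounded by that same right-hand side. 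I note that on $[T/4,T/2]$ the source weight $e^{-12s\mathfrak{S}^*+10s\widehat{\mathfrak{S}}}\mathfrak{Z}^{13}$ is comparable to the Carleman ones, and that since $g_i\in H^1_0(0,L)$ the Poincar\'e inequality lets me pass from $g_i$ to $g_{i,x}$, so every term produced is of the form appearing on the right-hand side of \eqref{Observ-main}.

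Next I treat $[T/2,T]$ through the well-posedness estimates of \Cref{Prop-1-well} (and their analogues for the remaining boundary conditions and time directions) in the space $Y_{\frac14}=L^2(0,T;H^1(0,L))\cap\C^0([0,T];L^2(0,L))$, whose $L^2(H^1)$ part furnishes the interior $\iint|\cdot_x|^2$ terms and whose $\C^0([0,T];L^2)$ part furnishes the sup-in-time and final-time values. Exploiting the cascade structure — $\zeta,\theta$ solve decoupled forward equations forced only by $g_3,g_4$, whereas $\eta,\psi$ are backward and forced by $\zeta\mathds{1}_{\oo}+g_1$ and $\theta\mathds{1}_{\oo}+g_2$ — I first estimate $\zeta,\theta$ on $[t_0,T]$ from their data at $t_0$ (controlled above) and from $g_3,g_4$, which bounds $\|\zeta(T)\|^2_{L^2}+\|\theta(T)\|^2_{L^2}$ and $\int_{T/2}^T(\|\zeta_x\|^2+\|\theta_x\|^2)$; then, using $\eta(T)=\psi(T)=0$, I estimate $\eta,\psi$ on $[T/2,T]$ from the already controlled $\zeta,\theta$ and from $g_1,g_2$. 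On $[T/2,T]$ the weights are constants, so the weighted left-hand integrals reduce there to the unweighted ones just bounded, while on $(0,T/2]$ they coincide with pieces of $I(\cdot,s)$ and are controlled by \eqref{Carleman-main-adjoint}.

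Finally, the weighted $L^\infty(0,T;L^2)$ term is obtained by a weighted energy argument: for each component I multiply its equation by $e^{-2s\widehat{\mathfrak{S}}}\mathfrak{Z}\,z$ and integrate over $(0,t)\times(0,L)$, using that $e^{-2s\widehat{\mathfrak{S}}}\mathfrak{Z}\to 0$ as $t\to 0$, that $|\partial_t(e^{-2s\widehat{\mathfrak{S}}}\mathfrak{Z})|\le Ce^{-2s\widehat{\mathfrak{S}}}\mathfrak{Z}^3$, and that the third-order terms produce, after integration by parts, only boundary contributions with a favourable sign thanks to the homogeneous boundary conditions; the remaining right-hand side involves $\iint e^{-2s\widehat{\mathfrak{S}}}\mathfrak{Z}^3|z_x|^2$, the quantity $\iint e^{-2s\widehat{\mathfrak{S}}}\mathfrak{Z}^3|z|^2\le C\iint e^{-2s\varphi}\xi^5|z|^2$, and source terms, all already dominated. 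I expect the main obstacle to be precisely this passage on $[T/2,T]$: the Carleman estimate is silent there, so one must carefully verify that the flattened-weight constants, the cascade ordering, the choice of $t_0$, and the Poincar\'e step for the $H^1_0$ sources are mutually consistent, so that every term generated by the energy and smoothing estimates is absorbed into the right-hand side of \eqref{Observ-main}.
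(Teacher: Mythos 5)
Your proposal is correct and follows the same overall architecture as the paper's proof: use the fact that the modified weights coincide with the Carleman weights on $(0,T/2]$, recover all information on $[T/2,T]$ from energy/well-posedness estimates for the KdV system, and glue the two regimes through $[T/4,T/2]$, where every weight is bounded above and below. You differ from the paper in how the two key steps are implemented. First, to transfer information across $t=T/2$ the paper multiplies the solution by a cutoff $\gamma$ with $\gamma\equiv 0$ on $[0,T/4]$ and $\gamma\equiv 1$ on $[T/2,T]$, applies \Cref{proposition-linear-well-adj} to $(\gamma\eta,\gamma\psi,\gamma\zeta,\gamma\theta)$, and absorbs the commutator terms $\gamma^\prime(\eta,\psi,\zeta,\theta)$, supported in $(T/4,T/2)$, via the Carleman estimate; you instead select an intermediate time $t_0\in[T/4,T/2]$ by a mean-value argument and run the cascade explicitly ($\zeta,\theta$ forward from $t_0$ using their decoupled equations \eqref{adj-extended-2}, then $\eta,\psi$ backward from $T$). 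The two devices are equivalent in strength; the cutoff avoids picking a specific time, while your version makes the cascade structure transparent. Second, for the weighted $L^\infty(0,T;L^2)$ term the paper applies \Cref{proposition-linear-well-adj} to $\widehat\rho\,(\eta,\psi,\zeta,\theta)$ with $\widehat\rho=e^{-s\widehat{\mathfrak{S}}}\mathfrak{Z}^{1/2}$, treating $\widehat\rho_t(\eta,\psi,\zeta,\theta)$ as extra sources, whereas you rederive the underlying weighted energy identity by hand; your sign check for the boundary terms ($\eta_x(t,0)$, $\psi_x(t,L)$, $\zeta_x(t,L)$, $\theta_x(t,0)$ appearing with favourable sign) is exactly the structure that makes \Cref{Prop-1-well} work, so both routes are legitimate.

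One caution. The inequality $\iint_{Q_T}e^{-2s\widehat{\mathfrak{S}}}\mathfrak{Z}^3|z|^2\le C\iint_{Q_T}e^{-2s\vphi}\xi^5|z|^2$ that you invoke in the final step is false as a global-in-time weight comparison: on $[T/2,T]$ the left-hand weight $e^{-2s\widehat{\mathfrak{S}}}\mathfrak{Z}^3$ is a positive constant, while $e^{-2s\vphi}\xi^5\to 0$ as $t\to T$ (this degeneracy at $t=T$ is precisely why the flattened weights $\mathfrak{Z},\mathfrak{S}$ are introduced). The quantity is nonetheless under control, but only through the splitting you use elsewhere: on $(0,T/2]$ it is dominated by the corresponding piece of $I(z,s)$ from \eqref{Carleman-main-adjoint}, and on $[T/2,T]$ it reduces, the weights being constant there, to the unweighted $L^2$ norms furnished by your cascade estimates. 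Stated as you wrote it, that step would be a gap; patched by the splitting, the argument closes.
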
  
	
	\begin{proof} The proof is made of two steps.

		\medskip 
		
$\bullet$	{\em \underline{Step 1.}}
	Let us recall  the definitions of weight functions $\vphi, \widehat \vphi$ from \eqref{Weight-2}, \eqref{Weight-min-max}.  Then, it is easy to observe from the Carleman estimate \eqref{Carleman-main-adjoint} that the following inequality holds true: 
	\begin{equation}\label{Carleman-modified}
		\begin{aligned}
			& \iint_{Q_T} e^{-2s\widehat \vphi} \xi^5 \left( |\eta|^2 + |\psi|^2 + |\zeta|^2 + |\theta|^2 
			\right) 
		+	\iint_{Q_T} e^{-2s\widehat \vphi} \xi^3 \left( |\eta_x|^2 + |\psi_x|^2 + |\zeta_x|^2 + |\theta_x|^2 
			  \right) \\
			& \leq   C  \iint_{Q_T} e^{-12s\vphi^*+ 10s\widehat \vphi} \xi^{13} \big(|g_{1,x}|^2 + |g_{2,x}|^2 + |g_{3,x}|^2 + |g_{4,x}|^2  \big) \\
			&   \   + C \int_0^T \int_{\omega_0} e^{-36s\vphi^* + 34s\widehat \vphi} \xi^{57} |\eta|^2 + C \int_0^T \int_{\omega_0} e^{-36s\vphi^* + 34s\widehat \vphi} \xi^{57} |\psi|^2 .
			\end{aligned} 
	\end{equation}
	
	Now, choose a function $\gamma \in \C^1([0,T])$ such that 
	\begin{align}\label{choice-gamma}
		\gamma = 0 \ \ \text{in } \, [0, T/4], \ \quad \gamma = 1 \ \ \text{in } \, [T/2, T] .
	\end{align}
It is clear that $\textnormal{Supp} (\gamma^\prime) \subset (T/4, T/2)$.

Consider the equations satisfied by $(\gamma \eta, \gamma \psi, \gamma \zeta , \gamma \theta)$, namely 
	\begin{align}\label{adj-extended-1-gamma}
	&\begin{cases}
		-(\gamma\eta)_t +\frac{1}{2}(\gamma\eta)_{xxx}  =  \gamma \zeta \mathds{1}_{\mathcal O} +  \gamma g_1 - \gamma^\prime \eta  &  \text{in } Q_T, \\
		-(\gamma \psi)_t - (\gamma \psi)_{xxx}   =  \gamma \theta \mathds{1}_{\mathcal O}  +  \gamma g_2 - \gamma^\prime \psi &  \text{in } Q_T, 
		\\
		(\gamma \eta)(t,0) = (\gamma \eta)(t,L) = (\gamma \eta_x)(t,L) = 0 & \text{for } t\in (0,T), \\
		(\gamma \psi)(t,0) = (\gamma \psi)(t,L) = (\gamma \psi_x)(t,0) =0 & \text{for } t\in (0,T) , \\
		(\gamma \eta)(T) = 0, \ \ (\gamma \psi)(T) = 0 & \text{in } (0,L) 
	\end{cases} \\
	\label{adj-extended-2-gamma}
	&\begin{cases} 	
		(\gamma\zeta)_t - \frac{1}{2}(\gamma \zeta)_{xxx} = \gamma g_3  + \gamma^\prime \zeta  &  \text{in } Q_T, \\
		(\gamma \theta)_t + (\gamma\theta)_{xxx}  = \gamma g_4  + \gamma^\prime \theta   &  \text{in } Q_T, \\
		(\gamma \zeta)(t,0) = (\gamma \zeta)(t,L) = (\gamma\zeta_x)(t,0) = 0 & \text{for } t\in (0,T), \\
		(\gamma \theta)(t,0) = (\gamma\theta)(t,L) = (\gamma \theta_x)(t,L) =0 & \text{for } t\in (0,T) , \\
		(\gamma\zeta)(0)=0, \ \ (\gamma\theta)(0)= 0 & \text{in } (0,L) , 
	\end{cases}
\end{align}
where we have used the adjoint equations  \eqref{adj-extended-1}--\eqref{adj-extended-2}. 

Applying \Cref{proposition-linear-well-adj} to   \eqref{adj-extended-1-gamma}--\eqref{adj-extended-2-gamma}
we get
\begin{equation}\label{energy-estimate-obs}
	\begin{aligned}
		&\| (\gamma \eta, \gamma \psi, \gamma \zeta , \gamma \theta) \|_{[L^2(0,T; H^1_0(0,L))]^4} + 	\| (\gamma \eta, \gamma \psi, \gamma \zeta , \gamma \theta) \|_{[L^\infty(0,T; L^2(0,L))]^4}
		\\
		&\leq C \left(\|(\gamma g_1, \gamma g_2, \gamma g_3, \gamma g_4)\|_{[L^2(0,T;L^2(0,L) )]^4} + \|(\gamma^\prime \eta, \gamma^\prime \psi, \gamma^\prime \zeta, \gamma^\prime \theta)\|_{[L^2(0,T; L^2(0,L))]^4}    \right) .
	\end{aligned}
\end{equation} 
	Thanks to the properties of $\gamma$ introduced in \eqref{choice-gamma}, we have  from \eqref{energy-estimate-obs}  that
	\begin{equation}\label{energy-estimate-obs-2}
		\begin{aligned}
			&\| (\eta, \psi, \zeta , \theta) \|_{[L^2(T/2,T; H^1_0(0,L))]^4} + 	\|\zeta(T)\|_{L^2(0,L)} + \|\theta(T)\|_{L^2(0,L)}
			\\
			&\leq C \left(\|(g_1,  g_2,  g_3,  g_4)\|_{[L^2(T/4,T;L^2(0,L) )]^4} + \|(\eta,  \psi,  \zeta,  \theta)\|_{[L^2(T/4,T/2; L^2(0,L))]^4}    \right) .
		\end{aligned}
	\end{equation}

 Note that the function  $e^{-2 s\widehat{\mathfrak{S}}} \mathfrak{Z}^5$ is bounded from  below in $[T/4,T/2]$ and therefore, in the right hand side of \eqref{energy-estimate-obs-2} we find that
\begin{equation}\label{ineq-T/2}
	\begin{aligned}
	&\|(\eta,  \psi,  \zeta,  \theta)\|^2_{[L^2(T/4,T/2; L^2(0,L))]^4} \\
	& \leq  C \int_{T/4}^{T/2}  \int_0^L e^{-2s\widehat{\mathfrak{S}}} \mathfrak{Z}^5 \left( |\eta|^2 + |\psi|^2 + |\zeta|^2 + |\theta|^2  \right) \\
	& \leq   C  \iint_{Q_T} e^{-12s\mathfrak{S}^*+ 10s\widehat{\mathfrak{S}} } \mathfrak{Z}^{13} \big(|g_{1,x}|^2 + |g_{2,x}|^2 + |g_{3,x}|^2 + |g_{4,x}|^2  \big) \\
	&   \quad  + C \int_0^T \int_{\omega_0} e^{-36s \mathfrak{S}^* + 34s \widehat{\mathfrak{S}}} \mathfrak{Z}^{57} \left(|\eta|^2 + |\psi|^2 \right),
   	\end{aligned}
	\end{equation}
	where we have used the fact that $\mathfrak{Z}=\xi$ and $\widehat {\mathfrak{S}} = \widehat \vphi$ in $[T/4,T/2]$ (see \Cref{remark-weights-equivalent}) as well as the Carleman estimate \eqref{Carleman-modified}. 
	
	\smallskip 
	
Also,	we can  incorporate the function  $e^{-2 s\widehat{\mathfrak{S}}} \mathfrak{Z}^n$ for any $n \in \mathbb N^*$  (which is bounded from above in $[T/2,T]$), in the estimate of $\|(\eta, \psi, \zeta, \theta)\|_{[L^2(T/2,T; H^1_0(0,L))]^4}$ in the left hand side of \eqref{energy-estimate-obs-2}.  This, combined with \eqref{ineq-T/2}, the estimate \eqref{energy-estimate-obs-2} follows
\begin{equation}\label{ineq-T/2-second} 
	\begin{aligned}
	&	\|\zeta(T)\|^2_{L^2(0,L)} + \|\theta(T)\|^2_{L^2(0,L)}      +  	\int_{T/2}^T \int_0^L e^{-2 s\widehat{\mathfrak{S}}} \mathfrak{Z}^5 \left( |\eta|^2 + |\psi|^2 + |\zeta|^2 + |\theta|^2 \right) \\
	& \quad + 
	\int_{T/2}^T \int_0^L e^{-2 s\widehat{\mathfrak{S}}} \mathfrak{Z}^3 \left( |\eta_x|^2 + |\psi_x|^2 + |\zeta_x|^2 + |\theta_x|^2 
	\right) \\	
&\leq 	 C  \iint_{Q_T} e^{-12s\mathfrak{S}^*+ 10s\widehat{\mathfrak{S}} } \mathfrak{Z}^{13} \big(|g_{1,x}|^2 + |g_{2,x}|^2 + |g_{3,x}|^2 + |g_{4,x}|^2  \big) \\
	& \quad    + C \int_0^T \int_{\omega_0} e^{-36s \mathfrak{S}^* + 34s \widehat{\mathfrak{S}}} \mathfrak{Z}^{57} \left(|\eta|^2 + |\psi|^2 \right),
	\end{aligned}
\end{equation}
 since $g_i \in L^2(0,T; H^1_0(0,L))$ for $i=1,2,3,4$.

 \medskip 
 
 On the other hand, since $\mathfrak{Z}=\xi$ and $\widehat {\mathfrak{S}} = \widehat \vphi$ in $(0,T/2]$ (\Cref{remark-weights-equivalent}), we deduce  that the 
  integrals
 \begin{align*}
 	&\int_{0}^{T/2}\int_0^L e^{-2 s\widehat{\mathfrak{S}}} \mathfrak{Z}^5 \left( |\eta|^2 + |\psi|^2 + |\zeta|^2 + |\theta|^2 
 \right) ,  \\ 
  \text{and } \
 	&\int_{0}^{T/2}\int_0^L e^{-2 s\widehat{\mathfrak{S}}} \mathfrak{Z}^3 \left( |\eta_x|^2 + |\psi_x|^2 + |\zeta_x|^2 + |\theta_x|^2 
 	\right)
 \end{align*} 
can be estimated by the same quantities appearing  in the right hand side of \eqref{ineq-T/2-second}  (thanks to the Carleman estimate \eqref{Carleman-modified}).  As a consequence, we have
\begin{equation}\label{ineq-0-T/2} 
	\begin{aligned}
		&	\|\zeta(T)\|^2_{L^2(0,L)} + \|\theta(T)\|^2_{L^2(0,L)} 
		+  
		\iint_{Q_T} e^{-2 s\widehat{\mathfrak{S}}} \mathfrak{Z}^5 \left( |\eta|^2 + |\psi|^2 + |\zeta|^2 + |\theta|^2 
		\right)
\\		
& \quad 		   +  
		\iint_{Q_T} e^{-2 s\widehat{\mathfrak{S}}} \mathfrak{Z}^3 \left( |\eta_x|^2 + |\psi_x|^2 + |\zeta_x|^2 + |\theta_x|^2 
		\right) \\	
		&\leq 	 C  \iint_{Q_T} e^{-12s\mathfrak{S}^*+ 10s\widehat{\mathfrak{S}} } \mathfrak{Z}^{13} \big(|g_{1,x}|^2 + |g_{2,x}|^2 + |g_{3,x}|^2 + |g_{4,x}|^2  \big) \\
		& \quad    + C \int_0^T \int_{\omega_0} e^{-36s \mathfrak{S}^* + 34s \widehat{\mathfrak{S}}} \mathfrak{Z}^{57} \left(|\eta|^2 + |\psi|^2 \right). 
	\end{aligned}
\end{equation}

\medskip 

$\bullet$ {\em \underline{Step 2.}} Let us define $\widehat \rho(t)= e^{-s\widehat{\mathfrak{S}}} \mathfrak{Z}^{1/2}$ so that $\widehat \rho(0)=0$. Again, by applying  \Cref{proposition-linear-well-adj} to the equations satisfied by 
$(\widehat \rho \eta, \widehat \rho \psi, \widehat \rho \zeta, \widehat \rho \theta)$,
	we get 
	\begin{equation}\label{energy-rho-adj}
		\begin{aligned}
	&\| (\widehat \rho \eta, \widehat \rho \psi, \widehat \rho \zeta , \widehat \rho \theta) \|_{[L^\infty(0,T; L^2(0,L))]^4}
			\\
		&	\leq C \left(\|(\widehat \rho g_1, \widehat \rho g_2, \widehat \rho g_3, \widehat \rho g_4)\|_{[L^2(0,T;L^2(0,L))]^4} + \|(\widehat \rho_t \eta, \widehat \rho_t \psi, \widehat \rho_t \zeta, \widehat \rho_t \theta)\|_{[L^2(0,T; L^2(0,L))]^4}    \right) .
			\end{aligned}
	\end{equation}
	Note that 
	\begin{align}
	 |\widehat \rho_t| \leq C s e^{-s\widehat{\mathfrak{S}}} \mathfrak{Z}^{5/2} \quad \text{since } |\widehat{\mathfrak{S}}_t| \leq C \mathfrak{Z}^2 ,
	\end{align}
for some constant $C>0$, 	and therefore, 
	\begin{equation}
		\begin{aligned}
	&	\|(\widehat \rho_t \eta, \widehat \rho_t \psi, \widehat \rho_t \zeta, \widehat \rho_t \theta)\|^2_{[L^2(0,T; L^2(0,L))]^4} \\
	&	\leq C \iint_{Q_T} e^{-2s\widehat{\mathfrak{S}}} \mathfrak{Z}^{5}\left(|\eta|^2 + |\psi|^2 + |\zeta|^2 + |\theta|^2  \right) \\ 
	&\leq 	 C  \iint_{Q_T} e^{-12s\mathfrak{S}^*+ 10s\widehat{\mathfrak{S}} } \mathfrak{Z}^{13} \big(|g_{1,x}|^2 + |g_{2,x}|^2 + |g_{3,x}|^2 + |g_{4,x}|^2  \big) \\
	& \ \    + C \int_0^T \int_{\omega_0} e^{-36s \mathfrak{S}^* + 34s \widehat{\mathfrak{S}}} \mathfrak{Z}^{57} \left(|\eta|^2 + |\psi|^2 \right). 	
		\end{aligned}
	\end{equation}
Using the above estimate in \eqref{energy-rho-adj}, and combining with \eqref{ineq-0-T/2}, we get the desired observability inequality \eqref{Observ-main}.
	
	The proof is complete.
	\end{proof}

	\subsection{Null-controllability}\label{Section-null-liniear}
	
	 This subsection is devoted to prove the global null-controllability of the extended linearized system \eqref{sys_linear-1}--\eqref{sys_linear-2} with initial data $(u_0,v_0)=(0,0)$ and source terms $f_i\in F$ with either  $F= L^1(0,T; L^2(0,L))$    or  $F= L^2(0,T; H^{-1}(0,L))$, for $i=1,2,3,4$. We mainly address the proof for $(f_1, f_2, f_3, f_4)\in [L^1(0,T; L^2(0,L))]^4$ and then, for the case when  $(f_1, f_2, f_3, f_4)\in [L^2(0,T; H^{-1}(0,L))]^4$,  we   point out the main changes in  the proof.


	\medskip

	Denote the  Banach space
	\begin{equation}\label{space_E}
\begin{aligned}
	\mathcal E := \Big\{ (u,v , p, q, h_1, h_2) \ | \ & e^{6s\mathfrak{S}^* - 5s \widehat{\mathfrak{S}} } {\mathfrak{Z}}^{-13/2}   (u, v, p, q) \in [L^2(0,T ; H^{-1}(0,L))]^4 ,  \\ 
	& e^{18s\mathfrak{S}^* - 17s \widehat{\mathfrak{S}} } {\mathfrak{Z}}^{-57/2} \big(h_1 \mathds{1}_\omega , h_2 \mathds{1}_\omega\big) \in [L^2((0,T)\times \omega)]^2 , \\  
	& e^{18s \mathfrak{S}^*  - 17 s \widehat{\mathfrak{S}} } \mathfrak{Z}^{-61/2} (u, v, p, q) \in [\C^0([0,T]; L^2(0,1))]^4  \\ 
	& \qquad \qquad \qquad \qquad \qquad \qquad \quad \cap [L^2(0,T; H^1_0(0,1))]^4  , \\ 
	& e^{s\widehat{\mathfrak{S}}} \mathfrak{Z}^{-1/2}\big(u_t - \frac{1}{2} u_{xxx} - h_1 \mathds{1}_{\omega}\big) \in L^1(0,T; L^2(0,L)), \\ 
	&  e^{s\widehat{\mathfrak{S}}} \mathfrak{Z}^{-1/2}\big(v_t + v_{xxx} - h_2 \mathds{1}_{\omega}\big) \in L^1(0,T; L^2(0,L)), \\ 
	& e^{s\widehat{\mathfrak{S}}} \mathfrak{Z}^{-1/2}\big(-p_t + \frac{1}{2} p_{xxx} - u \mathds{1}_{\oo}\big) \in L^1(0,T; L^2(0,L)) , \\                            
	&  e^{s\widehat{\mathfrak{S}}} \mathfrak{Z}^{-1/2}\big(-q_t - q_{xxx} - v \mathds{1}_{\oo}\big) \in L^1(0,T; L^2(0,L)), \\ 
	& \qquad p(T, \cdot)=q(T, \cdot)=0 \, \text{ in } (0,L) \Big\}, 
\end{aligned}
\end{equation}
and we prove the following null-controllability result. 	
	
\begin{proposition}\label{prop-null-cont-1}
	Let   $s$  be fixed parameter  according to \Cref{Carleman-main} and  $f_1, f_2, f_3, f_4$  be the functions satisfying 
	\begin{align}\label{condition-f_1-f_2} 
		e^{s\widehat{\mathfrak{S}}} \mathfrak{Z}^{-1/2} \big(f_1, f_2, f_3,f_4\big) \in [L^1(0,T; L^2(0,L))]^4  .
	\end{align} 
	Then, there exist controls $(h_1, h_2)$ and a solution $(u, v, p, q)$ to \eqref{sys_linear-1}--\eqref{sys_linear-2} in the space $\mathcal E$ such that we have 
	$p(0)=q(0)=0$ in $(0,L)$. 
\end{proposition}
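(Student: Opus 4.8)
The plan is to derive \Cref{prop-null-cont-1} from the observability inequality \eqref{Observ-main} by the classical duality (variational) method of Fursikov and Imanuvilov. Denote by $L^*$ the adjoint operator of \eqref{adj-extended-1}--\eqref{adj-extended-2}: for a smooth quadruple $Y=(\eta,\psi,\zeta,\theta)$ satisfying the boundary conditions of \eqref{adj-extended-1}--\eqref{adj-extended-2}, set $L^*Y=(g_1,g_2,g_3,g_4)$ with
\[
g_1=-\eta_t+\tfrac12\eta_{xxx}-\zeta\mathds{1}_\oo,\quad g_2=-\psi_t-\psi_{xxx}-\theta\mathds{1}_\oo,\quad g_3=\zeta_t-\tfrac12\zeta_{xxx},\quad g_4=\theta_t+\theta_{xxx}.
\]
On the space $P_0$ of such quadruples I would read \eqref{Observ-main} as a scalar product, introducing the bilinear form
\[
a(Y,\overline Y):=\iint_{Q_T} e^{-12s\mathfrak{S}^*+10s\widehat{\mathfrak{S}}}\,\mathfrak{Z}^{13}\sum_{i=1}^4 \partial_x g_i\,\partial_x\overline g_i + \int_0^T\!\!\int_{\omega_0} e^{-36s\mathfrak{S}^*+34s\widehat{\mathfrak{S}}}\,\mathfrak{Z}^{57}\big(\eta\,\overline\eta+\psi\,\overline\psi\big),
\]
together with the linear form $\langle\ell,Y\rangle:=\iint_{Q_T}\big(f_1\eta+f_2\psi+f_3\zeta+f_4\theta\big)$, which is exactly the duality pairing produced by integrating \eqref{sys_linear-1}--\eqref{sys_linear-2} against $Y$ when $(u_0,v_0)=(0,0)$ and $p(T)=q(T)=0$.

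The \emph{coercivity} of $a$ is precisely the content of \eqref{Observ-main}: since its left-hand side controls all four $L^\infty(0,T;L^2)$-norms, $a(Y,Y)=0$ forces $Y=0$, so $a(\cdot,\cdot)$ is an inner product on $P_0$; I would let $P$ be the completion for the associated norm. The delicate point is the \emph{boundedness} of $\ell$ on $P$, and this is where the weight bookkeeping matters. By Hölder in time ($L^1$--$L^\infty$),
\[
|\langle\ell,Y\rangle|\le \big\|e^{s\widehat{\mathfrak{S}}}\mathfrak{Z}^{-1/2}(f_1,f_2,f_3,f_4)\big\|_{[L^1(0,T;L^2(0,L))]^4}\,\big\|e^{-s\widehat{\mathfrak{S}}}\mathfrak{Z}^{1/2}(\eta,\psi,\zeta,\theta)\big\|_{[L^\infty(0,T;L^2(0,L))]^4},
\]
and the last factor is dominated by $a(Y,Y)^{1/2}$ thanks to the term $\|e^{-s\widehat{\mathfrak{S}}}\mathfrak{Z}^{1/2}(\eta,\psi,\zeta,\theta)\|^2_{[L^\infty(0,T;L^2)]^4}$ on the left of \eqref{Observ-main}; the hypothesis \eqref{condition-f_1-f_2} is exactly what makes the first factor finite. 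Lax--Milgram then yields a unique $\widehat Y=(\widehat\eta,\widehat\psi,\widehat\zeta,\widehat\theta)\in P$ with $a(\widehat Y,Y)=\langle\ell,Y\rangle$ for all $Y\in P$.

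Next I would read off the candidate state and controls from the weights: writing $\widehat g_i$ for the components of $L^*\widehat Y$,
\[
(\widehat u,\widehat v,\widehat p,\widehat q):=-\partial_x\Big(e^{-12s\mathfrak{S}^*+10s\widehat{\mathfrak{S}}}\mathfrak{Z}^{13}\,\partial_x(\widehat g_1,\widehat g_2,\widehat g_3,\widehat g_4)\Big),\qquad (\widehat h_1,\widehat h_2):=-e^{-36s\mathfrak{S}^*+34s\widehat{\mathfrak{S}}}\mathfrak{Z}^{57}(\widehat\eta,\widehat\psi)\mathds{1}_{\omega_0}.
\]
Testing $a(\widehat Y,Y)=\langle\ell,Y\rangle$ against arbitrary $Y\in P_0$ and integrating by parts shows that $(\widehat u,\widehat v,\widehat p,\widehat q)$ is a weak solution of \eqref{sys_linear-1}--\eqref{sys_linear-2} with controls $(\widehat h_1,\widehat h_2)$ and sources $(f_1,f_2,f_3,f_4)$; moreover the only surviving boundary contribution is the pairing of $(\widehat p(0),\widehat q(0))$ with the \emph{free} initial data $(\zeta(0),\theta(0))$ of the test functions, whence $\widehat p(0)=\widehat q(0)=0$. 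By construction the two weights entering $a$ are the squared reciprocals of the first two weights defining $\mathcal E$ in \eqref{space_E}, so $(\widehat u,\widehat v,\widehat p,\widehat q)$ lies in $L^2(0,T;H^{-1}(0,L))$ and $(\widehat h_1,\widehat h_2)$ in $L^2((0,T)\times\omega)$ with the required weights.

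The remaining task---which I expect to be the main obstacle---is to upgrade this weak solution to full membership in $\mathcal E$, i.e.\ the $\C^0([0,T];L^2)\cap L^2(0,T;H^1_0)$ bound with weight $e^{18s\mathfrak{S}^*-17s\widehat{\mathfrak{S}}}\mathfrak{Z}^{-61/2}$. The idea is to multiply $(\widehat u,\widehat v,\widehat p,\widehat q)$ by that weight, write the KdV systems satisfied by the weighted unknowns (the commutators with $\partial_t$ and $\partial_{xxx}$ produce only lower-order terms already controlled by the $L^2(H^{-1})$ bound just established), and close a weighted energy estimate via \Cref{Prop-1-well,Prop-3-well} and \Cref{proposition-linear-well}; since this weight vanishes at $t=0$ and $t=T$, no spurious boundary data is created and the bootstrap terminates. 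Finally, for data $(f_1,f_2,f_3,f_4)\in[L^2(0,T;H^{-1}(0,L))]^4$ the only modification is in $\ell$: one pairs the $f_i$ with the adjoint states in the $H^{-1}$--$H^1_0$ duality and bounds $\ell$ by the gradient term $\iint_{Q_T} e^{-2s\widehat{\mathfrak{S}}}\mathfrak{Z}^3\big(|\eta_x|^2+|\psi_x|^2+|\zeta_x|^2+|\theta_x|^2\big)$ on the left of \eqref{Observ-main} in place of the $L^\infty(L^2)$ term, the rest of the argument being unchanged.
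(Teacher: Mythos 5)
Your proposal follows the paper's proof essentially step by step: the same Fursikov--Imanuvilov variational scheme with the same bilinear form (the paper's \eqref{bilinear-form}), the same linear form paired in $L^1(L^2)$--$L^\infty(L^2)$ duality and bounded by the $L^\infty(L^2)$ term on the left of \eqref{Observ-main}, the same Lax--Milgram extraction of the minimizer, the same formulas for the state and controls (your extra minus signs are immaterial: since $\mathfrak{S}^*$, $\widehat{\mathfrak{S}}$, $\mathfrak{Z}$ depend only on $t$, $-\partial_x\big(w\,\partial_x \widehat g_i\big)=-w\,(\widehat g_i)_{xx}$, and the sign can be absorbed consistently in the transposition identity), and the same final bootstrap and $L^2(H^{-1})$ variant. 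Your mechanism for $\widehat p(0)=\widehat q(0)=0$ (pairing against the free initial data of the test functions) differs superficially from the paper's, which reads it off the formulas \eqref{solution-p}--\eqref{solution-q} whose weight $e^{-12s\mathfrak{S}^*+10s\widehat{\mathfrak{S}}}\mathfrak{Z}^{13}$ vanishes at $t=0$; the two arguments are equivalent.

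However, one step, as you justify it, would fail. You claim that the weight $e^{18s\mathfrak{S}^*-17s\widehat{\mathfrak{S}}}\mathfrak{Z}^{-61/2}$ ``vanishes at $t=0$ and $t=T$,'' so that the bootstrap creates no spurious boundary data. This is false: with the paper's normalization \eqref{choice-K-1} one has $\mathfrak{S}^*=\mathfrak{Z}$ and $\widehat{\mathfrak{S}}=\tfrac{71}{70}\mathfrak{Z}$, so $18\mathfrak{S}^*-17\widehat{\mathfrak{S}}=\tfrac{53}{70}\mathfrak{Z}>0$, and the weight blows up exponentially as $t\to 0^+$ (this blow-up is exactly what makes membership in $\mathcal E$ encode decay of the states near $t=0$); at $t=T$ it equals a finite positive constant, since $\mathfrak{Z}\equiv 4/T^2$ on $[T/2,T]$. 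The repair is the paper's argument: at $t=T$ nothing vanishes, but nothing needs to, because $\widehat p(T)=\widehat q(T)=0$ and the weight is merely finite there; and the zero initial data of the weighted states must be read from the explicit formulas rather than from the multiplier, e.g. $p^*=e^{6s\mathfrak{S}^*-7s\widehat{\mathfrak{S}}}\mathfrak{Z}^{-35/2}\big(\widehat\zeta_t-\tfrac12\widehat\zeta_{xxx}\big)_{xx}$, whose net exponent $6\mathfrak{S}^*-7\widehat{\mathfrak{S}}=-\tfrac{77}{70}\mathfrak{Z}$ is negative, so this composite weight does vanish exponentially at $t=0$ against the quantity $\big(\widehat\zeta_t-\tfrac12\widehat\zeta_{xxx}\big)_{x}$ controlled by the $\mathcal L$-norm. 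With that correction (and spelling out the extra boundary conditions built into the test space $\mathcal Q_0$, which are what make your integration by parts in $x$ free of boundary terms), your argument coincides with the paper's.
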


\begin{proof}
We consider the space 
\begin{equation}  
\begin{aligned} 
	\mathcal Q_0 := \Big\{ (\eta, \psi, \zeta, \theta) \in [\C^4(\overline{Q_T})]^4 \, | \, & \,
		\eta(t,0) = \eta(t,L) = \eta_{x}(t,L) = 0 , \\
		& \Big(\frac{1}{2}\eta_{xxx} - \zeta \mathds{1}_{\oo}\Big)\Big|_{\{x=0\}} = \Big(\frac{1}{2}\eta_{xxx} - \zeta \mathds{1}_{\oo}\Big)\Big|_{\{x=L\}} =0    , \\
		&\psi(t,0) = \psi(t,L) = \psi_{x}(t,0)= 0 , \\ 
	 	& \left(\psi_{xxx} + \theta \mathds{1}_{\oo} \right)\big|_{\{x=0\}} = 	 \left(\psi_{xxx} + \theta \mathds{1}_{\oo} \right)\big|_{\{x=L\}}  =0   , \\
&	\zeta(t,0)=\zeta(t,L) = \zeta_{x}(t,0)= \zeta_{xxx}(t,0) = \zeta_{xxx}(t,L) =0 , \\
	&	\theta(t,0)=\theta(t,L) = \theta_{x}(t,L)= \theta_{xxx}(t,0) = \theta_{xxx}(t,L) =0       \Big\} ,	                 
	\end{aligned}
\end{equation}
and define the bi-linear form 
\begin{align}
	\mathcal L :  	\mathcal Q_0 \times 	\mathcal Q_0 \to \mathbb R ,
\end{align}
given  by 
\begin{equation}\label{bilinear-form} 
	\begin{aligned}
	&	\mathcal L\left(  (\eta, \psi, \zeta, \theta), (\underline{\eta}, \underline{\psi}, \underline{\zeta}, \underline{\theta} )     \right)  \\
	&	= \iint_{Q_T} e^{-12s\mathfrak{S}^*+ 10s\widehat{\mathfrak{S}} } \mathfrak{Z}^{13} \bigg[ \Big(-\eta_t + \frac{1}{2} \eta_{xxx} - \zeta \mathds{1}_{\oo}      \Big)_x \Big(-\underline{\eta}_t + \frac{1}{2} \underline{\eta}_{xxx} - \underline{\zeta} \mathds{1}_{\oo}      \Big)_x  \\
	& \qquad \qquad \qquad + \Big(-\psi_t - \psi_{xxx} - \theta \mathds{1}_{\oo}      \Big)_x \Big(-\underline{\psi}_t - \underline{\psi}_{xxx} - \underline{\theta} \mathds{1}_{\oo}      \Big)_x \\
	&  \qquad \qquad \qquad  +  \Big(\zeta_t - \frac{1}{2} \zeta_{xxx} \Big)_x \Big(\underline{\zeta}_t - \frac{1}{2} \underline{\zeta}_{xxx} \Big)_x 
	 + \Big(\theta_t + \theta_{xxx} \Big)_x \Big(\underline{\theta}_t + \underline{\theta}_{xxx} \Big)_x \bigg]  
	  \\
	& \ \  + \int_0^T \int_{\omega_0} e^{-36s\mathfrak{S}^*+ 34s\widehat{\mathfrak{S}} } \mathfrak{Z}^{57} \left( \eta \underline \eta + \psi \underline \psi \right) . 
	\end{aligned}
\end{equation}
We further define the linear operator $\ell : \mathcal Q_0 \to \mathbb R$ which is given by 
\begin{equation} 
\begin{aligned}\label{linear-form} 
&	\ell \left( (\eta, \psi, \zeta, \theta)  \right)  \\
	 & = \left\langle f_1, \eta \right \rangle_{L^1(L^2), L^\infty(L^2)} + \left\langle f_2, \psi \right \rangle_{L^1(L^2), L^\infty(L^2)} + 
	\left\langle f_3, \zeta \right \rangle_{L^1(L^2), L^\infty(L^2)} + \left\langle f_4, \theta \right \rangle_{L^1(L^2), L^\infty(L^2)}   .
\end{aligned}
\end{equation}

It is clear that  \eqref{bilinear-form}  defines an inner product since the observability inequality \eqref{Observ-main} holds. 
We denote by $\mathcal Q$, the closure of $\mathcal Q_0$ w.r.t. the norm $\mathcal L(\cdot,\cdot)^{1/2}$ and indeed it is an Hilbert space endowed with the inner product \eqref{bilinear-form}. The linear functional $\ell$ is also bounded;  
  in fact, we see
\begin{equation}
	\begin{aligned}
	& \left| \left\langle f_1, \eta \right \rangle_{L^1(L^2), L^\infty(L^2)} + \left\langle f_2, \psi \right \rangle_{L^1(L^2), L^\infty(L^2)} + 
	\left\langle f_3, \zeta \right \rangle_{L^1(L^2), L^\infty(L^2)} + \left\langle f_4, \theta \right \rangle_{L^1(L^2), L^\infty(L^2)} \right|  \\
 &\leq 
 \big\|e^{s\widehat{\mathfrak{S}}} \mathfrak{Z}^{-1/2} (f_1, f_2, f_3,f_4) \big\|_{[L^1(0,T; L^2(0,L))]^4}   \times 	\big\|e^{-s\widehat{\mathfrak{S}}} \mathfrak{Z}^{1/2} (\eta, \psi, \zeta,\theta) \big\|_{[L^\infty(0,T; L^2(0,L))]^4}  \\
  &	<  +\infty ,
	\end{aligned}
\end{equation}
which is possible due to the choice \eqref{condition-f_1-f_2} and the observation estimate \eqref{Observ-main}.

Therefore, by  Lax-Milgram's theorem,  there  exists  unique $(\widehat \eta, \widehat \psi, \widehat \zeta, \widehat \theta)\in \mathcal Q\times \mathcal Q$ which satisfies 
\begin{align*}
	\mathcal L \left( (\widehat \eta, \widehat \psi, \widehat \zeta, \widehat \theta), (\underline \eta, \underline \psi, \underline \zeta, \underline \theta  )  \right) = \ell \left(  (\underline \eta, \underline \psi, \underline \zeta, \underline \theta  )  \right) , \quad \forall    (\underline \eta, \underline \psi, \underline \zeta, \underline \theta  )  \in \mathcal Q. 
\end{align*}

Now, we set 
\begin{align}\label{solution-u}
	&\widehat u = e^{-12s\mathfrak{S}^*+10s\widehat{\mathfrak{S}}} \mathfrak{Z}^{13}\big(-\widehat \eta_t + \frac{1}{2}\widehat \eta_{xxx} - \widehat \zeta \mathds{1}_{\oo}  \big)_{xx}, \\ 	&\widehat v =  e^{-12s\mathfrak{S}^*+10s\widehat{\mathfrak{S}}} \mathfrak{Z}^{13}\big(-\widehat \psi_t -\widehat \psi_{xxx} - \widehat \theta \mathds{1}_{\oo}  \big)_{xx}, \label{solution-v}
	\\
	&	\widehat p =  e^{-12s\mathfrak{S}^*+10s\widehat{\mathfrak{S}}} \mathfrak{Z}^{13}\big(\widehat \zeta_t - \frac{1}{2}\widehat \zeta_{xxx} \big)_{xx},    \label{solution-p} \\
	&	\widehat q =  e^{-12s\mathfrak{S}^*+10s\widehat{\mathfrak{S}}} \mathfrak{Z}^{13}\big(\widehat \theta_t + \widehat \theta_{xxx}   \big)_{xx},  \label{solution-q} 
\end{align}
and 
\begin{align}\label{controls}
	\widehat h_1 = e^{-36 s \mathfrak{S}^* + 34 s\widehat{\mathfrak{S}} }{\mathfrak{Z}}^{57} \widehat \eta \mathds{1}_\omega , \quad  	\widehat h_2 =e^{-36 s \mathfrak{S}^* + 34 s\widehat{\mathfrak{S}} }{\mathfrak{Z}}^{57} \widehat \psi \mathds{1}_\omega .
\end{align}

Let us find the following bound for $\widehat u$; we have 
\begin{equation}\label{u-bound}
	\begin{aligned}
		&	\int_0^T e^{12s\mathfrak{S}^* - 10 s \widehat{\mathfrak{S}} } {\mathfrak{Z}}^{-13} \|\widehat u\|^2_{H^{-1}(0,L)} \\
		= &\int_0^T e^{12s\mathfrak{S}^* - 10 s \widehat{\mathfrak{S}} } {\mathfrak{Z}}^{-13} \sup_{ \|\vartheta\|_{H^1_0} =1 } \left|\left\langle \widehat u, \vartheta \right \rangle \right|^2_{H^{-1}, H^1_0} \\
		 =&
		\int_0^T e^{12s\mathfrak{S}^* - 10 s \widehat{\mathfrak{S}} } {\mathfrak{Z}}^{-13}
		\sup_{ \|\vartheta\|_{H^1_0} =1 }
		\left| \left\langle  e^{-12s\mathfrak{S}^* + 10 s \widehat{\mathfrak{S}} } {\mathfrak{Z}}^{13} \big( -\widehat \eta_t + \frac{1}{2} \widehat{\eta}_{xxx} - \widehat \zeta \mathds{1}_{\oo} \big)_{xx}
		   , \vartheta \right \rangle \right|^2_{H^{-1}, H^1_0} \\
	 \leq & \iint_{Q_T} e^{-12s\mathfrak{S}^* + 10 s \widehat{\mathfrak{S}} } {\mathfrak{Z}}^{13}
		\left|\big( -\widehat \eta_t + \frac{1}{2} \widehat{\eta}_{xxx} - \widehat \zeta \mathds{1}_{\oo} \big)_{x} \right|^2 \\
		\leq &\mathcal L\big( (\widehat \eta, \widehat \psi, \widehat \zeta, \widehat \theta) ,  (\widehat \eta, \widehat \psi, \widehat \zeta, \widehat \theta) \big)  < +\infty . 
	\end{aligned}
\end{equation}  

In a similar way, we can find the required bounds for $\widehat v$, $\widehat p$ and $\widehat q$. Also, in a straightforward way we can show the boundedness of the control functions $(\widehat h_1, \widehat h_2)$. 

Eventually, we have the following bound:
\begin{equation}\label{Bound-sol-control}
	\begin{aligned} 
	&\big\| e^{6s\mathfrak{S}^* - 5 s \widehat{\mathfrak{S}} } {\mathfrak{Z}}^{-13/2} (\widehat u ,  \widehat v, \widehat p, \widehat q)  \big\|_{[L^2(0,T; H^{-1}(0,L))]^4}
	\\  & \quad \quad + \big\|e^{18s\mathfrak{S}^* - 17 s \widehat{\mathfrak{S}} } {\mathfrak{Z}}^{-57/2} (\widehat h_1, \widehat h_2)  \big\|_{[L^2((0,T)\times \omega)]^2} < +\infty  ,
	 \end{aligned}
\end{equation}
and this $(\widehat u, \widehat v, \widehat p, \widehat q)$ is the unique solution to the linearized system \eqref{sys_linear-1}--\eqref{sys_linear-2}  in the sense of transposition with the control functions $\widehat h_1$ and $\widehat h_2$.  Moreover, by construction of solutions \eqref{solution-p} and \eqref{solution-q}, it is clear that 
$$\widehat p(0)=0, \quad \widehat q(0)=0 \quad \text{in } (0,L),$$
which is the required null-controllability result for our system \eqref{sys_linear-1}--\eqref{sys_linear-2}.

\bigskip

Let us now define
\begin{align}\label{states-new}
	(u^*, v^*, p^*, q^*): = e^{18s \mathfrak{S}^*  - 17 s \widehat{\mathfrak{S}} } \mathfrak{Z}^{-61/2} ( \widehat u, \widehat v, \widehat p, \widehat q ) ,
\end{align}
so that it satisfies $(u^*(0), v^*(0), p^*(0), q^*(0)  )= (0,0,0,0)$. Indeed, we observe that (using the expression of $\widehat p$ from  \eqref{solution-p}) 
\begin{align*}
	p^* &= e^{18s \mathfrak{S}^*  - 17 s \widehat{\mathfrak{S}} } \mathfrak{Z}^{-61/2} e^{-12s\mathfrak{S}^*+10s\widehat{\mathfrak{S}}} \mathfrak{Z}^{13}\big(\widehat \zeta_t - \frac{1}{2}\widehat \zeta_{xxx} \big)_{xx}\\
	&= e^{ 6 s \mathfrak{S}^* - 7 s \widehat{\mathfrak{S}}  } \mathfrak{Z}^{-35/2}  \big(\widehat \zeta_t  -  \frac{1}{2}\widehat \zeta_{xxx}  \big)_{xx},
\end{align*}
so that by definitions of weight functions $\mathfrak{S}^*$, $\widehat{\mathfrak{S}}$ given by  \eqref{weights-new}, it is clear that $p^*(0)=0$. In a similar manner, we can show that this phenomenon holds for the  functions  $u^*$, $v^*$ and $q^*$ in \eqref{states-new}.  

\smallskip 

Let us now write the equations satisfied by $(u^*, v^*, p^*, q^*)$,  which are
		\begin{align}
	\label{sys_linear-sec-con-1}
	&\begin{cases}
		u^*_t -\frac{1}{2} u^*_{xxx}  = h^*_1 \mathds{1}_\omega + f^*_1  +  \big( e^{18s \mathfrak{S}^*  - 17 s \widehat{\mathfrak{S}} } \mathfrak{Z}^{-61/2}  \big)_t \widehat u    &  \text{in } Q_T, \\
		v^*_t + v^*_{xxx}   = h^*_2 \mathds{1}_\omega + f^*_2  +  \big(e^{18s \mathfrak{S}^*  - 17 s \widehat{\mathfrak{S}} } \mathfrak{Z}^{-61/2}\big)_t \widehat v    &  \text{in } Q_T, 
		\\
		u^*(t,0) = u^*(t,L) = u^*_x(t,0) = 0 & \text{for } t\in (0,T), \\
		v^*(t,0) = v^*(t,L) = v^*_x(t,L) =0 & \text{for } t\in (0,T) , \\
		u^*(0) = 0, \ \ v^*(0) = 0 & \text{in } (0,L)  ,
	\end{cases} \\
	\label{sys_linear-sec-con-2}
	&\begin{cases} 	
		-p^*_t + \frac{1}{2}p^*_{xxx} =  u^* \mathds{1}_{\mathcal O} + f^*_3  -  \big( e^{18s \mathfrak{S}^*  - 17 s \widehat{\mathfrak{S}} } \mathfrak{Z}^{-61/2}  \big)_t \widehat p   &  \text{in } Q_T, \\
		-q^*_t - q^*_{xxx}  =  v^* \mathds{1}_{\mathcal O}  + f^*_4   -  \big( e^{18s \mathfrak{S}^*  - 17 s \widehat{\mathfrak{S}} } \mathfrak{Z}^{- 61/2}  \big)_t \widehat q  &  \text{in } Q_T, \\
		p^*(t,0) = p^*(t,L) = p^*_x(t,L) = 0 & \text{for } t\in (0,T), \\
		q^*(t,0) = q^*(t,L) = q^*_x(t,0) =0 & \text{for } t\in (0,T) , \\
		p^*(T)=0, \ \ q^*(T)=0 & \text{in } (0,L),
	\end{cases}
\end{align}
where $
(h^*_1, h^*_2) :=  e^{18s \mathfrak{S}^*  - 17 s \widehat{\mathfrak{S}} } \mathfrak{Z}^{-61/2} (\widehat h_1 , \widehat h_2)$,
and they belong to $[L^2((0,T)\times \omega)]^2$, thanks to \eqref{Bound-sol-control}.  

Also, we have 
\begin{align*}
	(f^*_1, f^*_2, f_3^*, f_4^*): =  e^{18s \mathfrak{S}^*  - 17 s \widehat{\mathfrak{S}} } \mathfrak{Z}^{-61/2} (f_1 ,f_2, f_3, f_4) \in [L^1(0,T; L^2(0,L))]^4 ,
\end{align*}
since $e^{18s \mathfrak{S}^*  - 17 s \widehat{\mathfrak{S}} } \mathfrak{Z}^{-61/2} \leq C e^{s\widehat{\mathfrak{S}}} \mathfrak{Z}^{-1/2}$.

Beside the above, one can  compute that 
\begin{align*}
\left|	\big( e^{18s \mathfrak{S}^*  - 17 s \widehat{\mathfrak{S}} } \mathfrak{Z}^{-61/2}  \big)_t \right| \leq C  e^{18s \mathfrak{S}^*  - 17 s \widehat{\mathfrak{S}} } \mathfrak{Z}^{-57/2} .
	\end{align*} 
Consequenty, 
\begin{align*}
\left|	\big( e^{18s \mathfrak{S}^*  - 17 s \widehat{\mathfrak{S}} } \mathfrak{Z}^{-61/2}  \big)_t \widehat u \right|
&\leq C \left| e^{18s \mathfrak{S}^*  - 17 s \widehat{\mathfrak{S}} } \mathfrak{Z}^{-57/2} \widehat u \right| \\
& \leq  C e^{12s(\mathfrak{S}^* -  \widehat{\mathfrak{S}}) } \mathfrak{Z}^{-44/2} \left|  e^{6s \mathfrak{S}^*  -  5 s \widehat{\mathfrak{S}}  } \mathfrak{Z}^{-13/2} \widehat u   \right|  ,
\end{align*}
and then  by using the fact $\mathfrak{S}^* \leq \widehat{\mathfrak{S}}$ and    \eqref{Bound-sol-control},  we deduce  $\big( e^{18s \mathfrak{S}^*  - 17 s \widehat{\mathfrak{S}} } \mathfrak{Z}^{-61/2}  \big)_t \widehat u \in L^2(0,T; H^{-1}(0,L))$. In a similar way, we can show that the same phenomenon holds true  for the related terms in the right hand side of the equations satisfied by $v^*, p^*, q^*$.

\smallskip 
Altogether, we have shown  that each source term in the set of equations \eqref{sys_linear-sec-con-1}--\eqref{sys_linear-sec-con-2} belongs to the space $L^2(0,T; H^{-1}(0,L))$. As a result, by applying  \Cref{proposition-linear-well}, we have 
\begin{align*}
	(u^*, v^*, p^*, q^*) \in   [\C^0([0,T]; L^2(0,L))]^4 \cap [L^2(0,T; H^{1}_0(0,L) )]^4 , 
\end{align*} 
and moreover,
\begin{align*}
\|(u^*, v^*, p^*, q^*)\|_{ [\C^0([0,T]; L^2(0,L))]^4 \cap [L^2(0,T; H^{1}_0(0,L) )]^4 }  < +\infty .
\end{align*}
Therefore, the functions $(\widehat u, \widehat v, \widehat p, \widehat q, \widehat h_1, \widehat h_2)$ belong to the space $\mathcal E$ defined  by \eqref{space_E}. 

The proof is complete.  
\end{proof}

\paragraph{\bf Analogous null-controllability result}

Recall that, we have proved the controllability result in \Cref{prop-null-cont-1} with the choices  $f_i \in L^1(0,T; L^2(0,L))$ for $i=1,2,3,4$ verifying \eqref{condition-f_1-f_2}. We can     derive    a similar control result when $f_i \in L^2(0,T; H^{-1}(0,L))$ with few changes in the proof.  

\smallskip 

More specifically, we now consider a Banach space 
`` $\widetilde{\mathcal E}$ "   
that contains the elements  $(u, v, p, q, h_1, h_2)$ that verify the {\em first three conditions} and {\em last condition} of \eqref{space_E}, with in addition 
\begin{align*}
& e^{s\widehat{\mathfrak{S}}} \mathfrak{Z}^{-3/2}\big(u_t - \frac{1}{2} u_{xxx} - h_1 \mathds{1}_{\omega}\big) \in L^2(0,T; H^{-1}(0,L)), \\
&  e^{s\widehat{\mathfrak{S}}} \mathfrak{Z}^{-3/2}\big(v_t + v_{xxx} - h_2 \mathds{1}_{\omega}\big) \in L^2(0,T; H^{-1}(0,L)), \\ 
& e^{s\widehat{\mathfrak{S}}} \mathfrak{Z}^{-3/2}\big(-p_t + \frac{1}{2} p_{xxx} - u \mathds{1}_{\oo}\big) \in L^2(0,T; H^{-1}(0,L)) , \\                            
&  e^{s\widehat{\mathfrak{S}}} \mathfrak{Z}^{-3/2}\big(-q_t - q_{xxx} - v \mathds{1}_{\oo}\big) \in L^2(0,T; H^{-1}(0,L)) .
\end{align*}


\medskip

In this case, we have the following result. 

\begin{proposition}\label{prop-null-cont-2}
	Let   $s$  be fixed parameter  according to \Cref{Carleman-main} and  $f_1, f_2, f_3, f_4$  be the functions satisfying 
	\begin{align}\label{condition-f_1-f_2-2} 
		e^{s\widehat{\mathfrak{S}}} \mathfrak{Z}^{-3/2} \big(f_1, f_2, f_3,f_4\big) \in [L^2(0,T; H^{-1}(0,L))]^4  .
	\end{align} 
	Then, there exists controls $(h_1, h_2)$ and a solution $(u, v, p, q)$ to \eqref{sys_linear-1}--\eqref{sys_linear-2} in the space $\widetilde{\mathcal E}$ such that we have 
	$p(0)=q(0)=0$ in $(0,L)$. 
\end{proposition}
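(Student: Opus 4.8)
The plan is to run the Lax-Milgram scheme of \Cref{prop-null-cont-1} essentially verbatim, keeping the same test-function space $\mathcal Q_0$, the same bilinear form $\mathcal L$ from \eqref{bilinear-form}, and hence the same Hilbert space $\mathcal Q$ obtained as the closure of $\mathcal Q_0$ under $\mathcal L(\cdot,\cdot)^{1/2}$. These objects are independent of the data $f_i$, so they do not change. The only object that must be modified is the linear functional $\ell$ of \eqref{linear-form}: since the data are now measured in $L^2(0,T;H^{-1}(0,L))$ rather than in $L^1(0,T;L^2(0,L))$, I would replace the dualities by the $L^2(H^{-1})$--$L^2(H^1_0)$ pairings, setting
\[
\ell\big((\eta,\psi,\zeta,\theta)\big)=\langle f_1,\eta\rangle_{L^2(H^{-1}),L^2(H^1_0)}+\langle f_2,\psi\rangle+\langle f_3,\zeta\rangle+\langle f_4,\theta\rangle .
\]

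The conceptual heart of the argument is the boundedness of this new $\ell$ on $\mathcal Q$. Separating the weights and applying Cauchy--Schwarz gives
\[
\big|\ell\big((\eta,\psi,\zeta,\theta)\big)\big|\le \big\|e^{s\widehat{\mathfrak{S}}}\mathfrak{Z}^{-3/2}(f_1,f_2,f_3,f_4)\big\|_{[L^2(H^{-1})]^4}\,\big\|e^{-s\widehat{\mathfrak{S}}}\mathfrak{Z}^{3/2}(\eta,\psi,\zeta,\theta)\big\|_{[L^2(H^1_0)]^4}.
\]
The first factor is finite by the hypothesis \eqref{condition-f_1-f_2-2}. For the second factor I would use that each of $\eta,\psi,\zeta,\theta$ vanishes at both endpoints $x=0$ and $x=L$ (see the boundary conditions in \eqref{adj-extended-1}--\eqref{adj-extended-2}), so the Poincar\'e inequality yields
\[
\big\|e^{-s\widehat{\mathfrak{S}}}\mathfrak{Z}^{3/2}(\eta,\psi,\zeta,\theta)\big\|^2_{[L^2(H^1_0)]^4}\le C\iint_{Q_T}e^{-2s\widehat{\mathfrak{S}}}\mathfrak{Z}^3\big(|\eta_x|^2+|\psi_x|^2+|\zeta_x|^2+|\theta_x|^2\big),
\]
and the right-hand side is exactly one of the terms appearing on the left-hand side of the observability inequality \eqref{Observ-main}. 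This is precisely the structural difference from \Cref{prop-null-cont-1}: there the $L^\infty(L^2)$ term of \eqref{Observ-main} bounded $\ell$, whereas here the weighted gradient (i.e. $L^2(H^1_0)$) term does the work. Consequently $\ell$ is bounded and Lax--Milgram provides a unique $(\widehat\eta,\widehat\psi,\widehat\zeta,\widehat\theta)\in\mathcal Q$.

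With this solution I would then define $(\widehat u,\widehat v,\widehat p,\widehat q)$ and $(\widehat h_1,\widehat h_2)$ through the very same formulas \eqref{solution-u}--\eqref{controls}; the weighted bound \eqref{Bound-sol-control} and the identities $\widehat p(0)=\widehat q(0)=0$ follow as before, since they depend only on $\mathcal L(\cdot,\cdot)$ and on the algebraic form of the formulas, not on $\ell$. The first three conditions of \eqref{space_E} thus hold, while the new residual conditions defining $\widetilde{\mathcal E}$ are immediate, since by construction $\widehat u_t-\tfrac12\widehat u_{xxx}-\widehat h_1\mathds 1_\omega=f_1$ and $e^{s\widehat{\mathfrak{S}}}\mathfrak{Z}^{-3/2}f_1\in L^2(H^{-1})$ by hypothesis, and similarly for the other three. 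Finally, rescaling via \eqref{states-new} as $(u^*,v^*,p^*,q^*)=e^{18s\mathfrak{S}^*-17s\widehat{\mathfrak{S}}}\mathfrak{Z}^{-61/2}(\widehat u,\widehat v,\widehat p,\widehat q)$, I would check that the new source terms $f_i^*=e^{18s\mathfrak{S}^*-17s\widehat{\mathfrak{S}}}\mathfrak{Z}^{-61/2}f_i$ lie in $L^2(0,T;H^{-1}(0,L))$; this relies on the pointwise comparison $e^{18s\mathfrak{S}^*-17s\widehat{\mathfrak{S}}}\mathfrak{Z}^{-61/2}\le C\,e^{s\widehat{\mathfrak{S}}}\mathfrak{Z}^{-3/2}$, which holds because $e^{18s(\mathfrak{S}^*-\widehat{\mathfrak{S}})}\mathfrak{Z}^{-29}$ is bounded on $(0,T)$ (as $\mathfrak{S}^*\le\widehat{\mathfrak{S}}$ and $\mathfrak{Z}$ is bounded below), and which merely replaces the exponent $\mathfrak{Z}^{-1/2}$ used in \Cref{prop-null-cont-1} by $\mathfrak{Z}^{-3/2}$. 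The drift terms $\big(e^{18s\mathfrak{S}^*-17s\widehat{\mathfrak{S}}}\mathfrak{Z}^{-61/2}\big)_t\widehat u$ and their analogues land in $L^2(0,T;H^{-1}(0,L))$ by the identical computation as before, so \Cref{proposition-linear-well} (now with $F=X_{\frac14}$) places $(u^*,v^*,p^*,q^*)$ in $[\C^0([0,T];L^2)]^4\cap[L^2(0,T;H^1_0)]^4$, giving $(\widehat u,\widehat v,\widehat p,\widehat q,\widehat h_1,\widehat h_2)\in\widetilde{\mathcal E}$.

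The sole genuine obstacle is the boundedness of $\ell$ in the new dual pairing, i.e. ensuring that the observability inequality \eqref{Observ-main} already carries the weighted $L^2(H^1_0)$ norm of the adjoint states on its left-hand side. Since it does, through the gradient integral combined with the Poincar\'e inequality, the remainder of the argument is a transcription of \Cref{prop-null-cont-1} with the bookkeeping weight $\mathfrak{Z}^{-1/2}$ systematically replaced by $\mathfrak{Z}^{-3/2}$.
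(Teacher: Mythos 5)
Your proposal is correct and follows essentially the same route as the paper: the paper's proof likewise keeps the Lax--Milgram framework of \Cref{prop-null-cont-1} unchanged, replaces $\ell$ by the $L^2(H^{-1})$--$L^2(H^1_0)$ pairing, and bounds it via Cauchy--Schwarz using the weighted gradient term on the left-hand side of \eqref{Observ-main}. The remaining steps (definition of the solution/controls, membership in $\widetilde{\mathcal E}$, rescaling) are exactly the transcription with adjusted weights that you describe, which the paper simply declares "similar" and omits.
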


\begin{proof}
	The proof follows almost the same lines as the proof for \Cref{prop-null-cont-1} except we now consider 
	the linear functional $\ell : \mathcal Q_0 \to \mathbb R$  as 
	\begin{equation} 
		\begin{aligned}\label{linear-form-2} 
			&	\ell \left( (\eta, \psi, \zeta, \theta)  \right)  \\
			=&  \left\langle f_1, \eta \right \rangle_{L^2(H^{-1}), L^2(H^1_0)} + \left\langle f_2, \psi \right \rangle_{L^2(H^{-1}), L^2(H^1_0)} + 
			\left\langle f_3, \zeta \right \rangle_{L^2(H^{-1}), L^2(H^1_0)} + \left\langle f_4, \theta \right \rangle_{L^2(H^{-1}), L^2(H^1_0)}   .
		\end{aligned}
	\end{equation}
This verifies 
\begin{equation}
	\begin{aligned}
	&\left|  \left\langle f_1, \eta \right \rangle_{L^2(H^{-1}), L^2(H^1_0)} + \left\langle f_2, \psi \right \rangle_{L^2(H^{-1}), L^2(H^1_0)} + 
		\left\langle f_3, \zeta \right \rangle_{L^2(H^{-1}), L^2(H^1_0)} + \left\langle f_4, \theta \right \rangle_{L^2(H^{-1}), L^2(H^1_0)}     \right| \\
&	\leq 
	\big\|e^{s\widehat{\mathfrak{S}}} \mathfrak{Z}^{-3/2} (f_1, f_2, f_3,f_4) \big\|_{[L^2(0,T; H^{-1}(0,L))]^4}   \times 	\big\|e^{-s\widehat{\mathfrak{S}}} \mathfrak{Z}^{3/2} (\eta, \psi, \zeta,\theta) \big\|_{[L^2(0,T; H^1_0(0,L))]^4} \\
&	\leq 
	\big\|e^{s\widehat{\mathfrak{S}}} \mathfrak{Z}^{-3/2} (f_1, f_2, f_3,f_4) \big\|_{[L^2(0,T; H^{-1}(0,L))]^4}   \times 	\big\|e^{-s\widehat{\mathfrak{S}}} \mathfrak{Z}^{3/2} (\eta_x, \psi_x, \zeta_x,\theta_x) \big\|_{[L^2(0,T; L^2(0,L))]^4} \\
& <   +\infty ,	 
	\end{aligned}
\end{equation}
	thanks to the choices of $f_i$  ($i=1,2,3,4$) in \eqref{condition-f_1-f_2-2} and the observation inequality \eqref{Observ-main}.

	\smallskip 
	
We skip the other   details of the proof since those are similar with the proof of \Cref{prop-null-cont-1}. 
\end{proof}

\medskip

\section{Local null-controllability of the extended  nonlinear  system}\label{Section-null-nonlinear}

In this section, we  prove the main theorem of our paper, 
 that is \Cref{thm:main} and as explained in Section \ref{Sec-Introduction}, this is equivalent to prove the local null-controllability of the extended system \eqref{sys_equiv_1}--\eqref{sys_equiv_2}, which is precisely \Cref{thm:main_extended}.  
To prove it, we use  the following well-known result. 

\begin{theorem}[Inverse mapping theorem]\label{thm-inverse}
	Let $\mathcal G_1$, $\mathcal G_2$ be two Banach spaces and $\Y: \G_1 \to \G_2$ be a map  satisfying $\Y\in \C^1(\G_1; \G_2)$. Assume that $b_1\in \G_1$,  $\Y(b_1)=b_2\in \G_2$ and $\Y^\prime(b_1):\G_1\to \G_2$ is surjective. Then, there exists $\delta>0$ such that for every $\widetilde b\in \G_2$ satisfying $\|\widetilde b - b_2\|_{\G_2}<\delta$, there exists a solution of the equation
	$$\Y(b)=\widetilde b, \quad b\in \G_1. $$
\end{theorem}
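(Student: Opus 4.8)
The plan is to reduce the equation $\Y(b)=\widetilde b$ to a fixed-point problem built around a bounded right inverse of the surjective linear map $A:=\Y'(b_1):\G_1\to\G_2$. Since $A$ is a continuous linear surjection between Banach spaces, the open mapping theorem yields a constant $C_A>0$ such that every $y\in\G_2$ has a preimage of controlled size; fixing a bounded linear right inverse $R:\G_2\to\G_1$ with $AR=\mathrm{id}_{\G_2}$ and $\|R\|\le C_A$ (in a Hilbert setting one may take the pseudo-inverse attached to the orthogonal complement of $\ker A$) will be the key device. This is the only place where the surjectivity hypothesis is used, and it is what makes the argument quantitative.

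First I would write, for $w\in\G_1$ near $0$, the first-order expansion
\begin{equation*}
\Y(b_1+w)=b_2+Aw+\mathcal R(w),\qquad \mathcal R(w):=\Y(b_1+w)-b_2-Aw,
\end{equation*}
where, by $\Y\in\C^1(\G_1;\G_2)$, the remainder satisfies $\mathcal R(0)=0$ together with the local Lipschitz smallness $\|\mathcal R(w)-\mathcal R(w')\|_{\G_2}\le \kappa(\rho)\,\|w-w'\|_{\G_1}$ on the ball $\|w\|,\|w'\|\le\rho$, with $\kappa(\rho)\to 0$ as $\rho\to 0$ (a consequence of the continuity of $\Y'$ at $b_1$ and the mean value inequality). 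Setting $e:=\widetilde b-b_2$, the equation $\Y(b_1+w)=\widetilde b$ is equivalent to $Aw+\mathcal R(w)=e$, and applying $R$ suggests the fixed-point map
\begin{equation*}
\Phi(w):=R\big(e-\mathcal R(w)\big),
\end{equation*}
whose fixed points satisfy $Aw=AR(e-\mathcal R(w))=e-\mathcal R(w)$, i.e.\ exactly the desired equation.

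Then I would close the argument by a contraction estimate on a fixed small ball. Choosing $\rho>0$ so that $\|R\|\,\kappa(\rho)\le \tfrac12$ makes $\Phi$ a $\tfrac12$-contraction there, since $\Phi(w)-\Phi(w')=R\big(\mathcal R(w')-\mathcal R(w)\big)$; and imposing $\|e\|_{\G_2}\le \rho/(2\|R\|)=:\delta$ guarantees that $\Phi$ maps the ball $\{\|w\|\le\rho\}$ into itself, because $\|\Phi(w)\|\le \|R\|\,\|e\|+\|R\|\,\kappa(\rho)\,\rho\le \rho/2+\rho/2$. The Banach fixed-point theorem then produces a unique $w$ in the ball, and $b:=b_1+w$ solves $\Y(b)=\widetilde b$ for every $\widetilde b$ with $\|\widetilde b-b_2\|_{\G_2}<\delta$. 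The main obstacle is the construction of the bounded right inverse $R$: surjectivity alone gives only an open map, while a uniformly bounded right inverse requires either the Hilbert-space structure (so that $\ker A$ is complemented and $R$ can be taken linear) or an appeal to a selection theorem of Bartle--Graves type, in which case $R$ is merely continuous and positively homogeneous and the contraction step must be replaced by a Newton-type iteration selecting at each stage a preimage of norm $\le C_A\|\cdot\|$. All remaining estimates are routine consequences of the $\C^1$ hypothesis.
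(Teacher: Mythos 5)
The paper itself offers no proof of this statement---it is quoted as a well-known result (it is essentially Graves' theorem / the Lyusternik--Graves surjective inverse mapping theorem)---so your attempt must be measured against the standard argument, and as written it has a genuine gap. A surjective bounded linear operator $A=\Y'(b_1)$ between Banach spaces does \emph{not} in general admit a bounded \emph{linear} right inverse: the open mapping theorem only gives, for each $y\in\G_2$, \emph{some} preimage $x$ with $\|x\|_{\G_1}\le C_A\|y\|_{\G_2}$, and a bounded linear selection $R$ exists if and only if $\ker A$ is complemented in $\G_1$, which can fail (e.g.\ the quotient map $\ell^\infty\to\ell^\infty/c_0$ has uncomplemented kernel, by Phillips' theorem). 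You flag this obstacle yourself, but neither of your proposed repairs closes the argument. With a Bartle--Graves selection, $R$ is continuous and positively homogeneous but not Lipschitz and not additive, so the identity $\Phi(w)-\Phi(w')=R\big(\mathcal R(w')-\mathcal R(w)\big)$ on which your contraction estimate rests is simply false, and the Banach fixed-point step collapses. The ``Newton-type iteration'' you mention in one clause is not a patch on your argument---it \emph{is} the proof in the general case: one sets $r_0=e$, chooses $z_n$ with $Az_n=r_n$ and $\|z_n\|_{\G_1}\le C_A\|r_n\|_{\G_2}$, uses your remainder estimate to get $\|r_{n+1}\|_{\G_2}\le \kappa(\rho)\,C_A\,\|r_n\|_{\G_2}$, and sums the resulting geometric series to produce $w=\sum_n z_n$ with $\Y(b_1+w)=\widetilde b$; this replaces the fixed-point scheme entirely and you leave it unexecuted.

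The gap is not academic in the present context: the theorem is applied in the paper with $\G_1=\mathcal E$ and $\G_2=[\mathcal F]^4$, spaces built from weighted $L^1(0,T;L^2(0,L))$ and $\C^0([0,T];L^2(0,L))$ norms, which are not Hilbert spaces, so your Hilbert-space escape hatch (orthogonal complement of $\ker A$) is unavailable precisely where the result is needed. On the positive side, your first-order decomposition $\Y(b_1+w)=b_2+Aw+\mathcal R(w)$, the estimate $\|\mathcal R(w)-\mathcal R(w')\|_{\G_2}\le\kappa(\rho)\|w-w'\|_{\G_1}$ with $\kappa(\rho)\to0$ (a correct consequence of continuity of $\Y'$ and the mean value inequality), and the smallness bookkeeping $\|R\|\kappa(\rho)\le\tfrac12$, $\delta=\rho/(2\|R\|)$ are exactly the ingredients the Graves iteration consumes; what is missing is the iteration itself in place of the nonexistent linear right inverse.
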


\smallskip


\begin{proof}[\bf Proof of \Cref{thm:main_extended}]
We apply \Cref{thm-inverse} to prove the required local null-controllability result for the system \eqref{sys_equiv_1}--\eqref{sys_equiv_2}. We take 
\begin{align*}
	\G_1  = \mathcal E , \quad \G_2 = [\mathcal F]^4 ,
\end{align*}
where $\mathcal E$ is defined by \eqref{space_E} and 
\begin{align}\label{space-F}
	\mathcal F : = \Big\{ f  \mid  e^{s  \widehat{\mathfrak{S}} } \mathfrak{Z}^{-1/2} f \in L^1(0,T; L^2(0,L))       \Big\} .
\end{align}
Now, define the map $\mathcal Y : \G_1 \to \G_2$ such that 
\begin{align}\label{map-Y}
	\Y\left(u,v,p,q,h_1,h_2\right)   
	&= \Big( u_t - \frac{1}{2} u_{xxx} -  3 uu_x + 6vv_x - h_1 \mathds{1}_\omega , \, v_t + v_{xxx} + 3uv_x - h_2\mathds{1}_\omega, \\
	& \qquad \qquad  -p_t + \frac{1}{2} p_{xxx} - 3pu_x + 3qv_x - u\mathds{1}_\oo , \, -q_t -q_{xxx} +  6pv_x - v\mathds{1}_\oo \Big) .  \notag 
\end{align}

\begin{itemize}
	\item Let us first check  that $\Y \in \C^1(\G_1; \G_2)$. In this regard, we denote 
	the space 
	\begin{align}
	\mathcal P : = \Big\{  y  \mid  e^{18s \mathfrak{S}^* - 17 s\widehat{\mathfrak{S}} } \mathfrak{Z}^{-61/2} y \in L^2(0,T; H^1_0(0,L))      \Big\} .
	\end{align}
	Then observe that, to prove $\Y \in \C^1(\G_1; \G_2)$,  
  it is enough to show that the map 
	\begin{align}\label{map-2}
		(y,z) \in [ \mathcal P ]^2  \mapsto yz_x \in  \mathcal F 
	\end{align}
	is continuous. 
	
	\smallskip 
	
	Recall the construction of weight functions $\mathfrak{S}^*$, $\widehat{\mathfrak{S}}$ in \eqref{weights-new}. Then,  as we have obtained \eqref{cond-weights-max-min}, one has   
	\begin{align*}
     36 s  \mathfrak{S}^* (t) - 35 s  \widehat{\mathfrak{S}} (t) \geq c_0 s \mathfrak{Z}(t) , 
	\end{align*}
for all $t\in (0,T]$ and for some $c_0>0$. 
Consequently, 
	\begin{align}\label{cond-non}
		e^{ s \widehat{\mathfrak{S}} } \mathfrak{Z}^{-1/2} \leq  e^{36 s  \mathfrak{S}^*- 34  \widehat{\mathfrak{S}}} \mathfrak{Z}^{-61} \times e^{-  c_0 s \mathfrak{Z}(t) } \mathfrak{Z}^{ 61-\frac{1}{2} } \leq C   e^{36 s  \mathfrak{S}^*- 34  \widehat{\mathfrak{S}}} \mathfrak{Z}^{-61}  
	\end{align}
for some constant $C>0$. 

\smallskip 

	Using \eqref{cond-non}, we now compute the following:  for any two functions $y, z \in  \mathcal P$, 
	\begin{equation}\label{bound-nonlinear}
	\begin{aligned}
		&\| y z_x \|_{\mathcal F}  = \int_0^T  e^{s\widehat{\mathfrak{S}}} \mathfrak{Z}^{-1/2} \|yz_x \|_{L^2(0,L)} \\
& \quad 	\leq   C \int_0^T e^{18 s  \mathfrak{S}^*- 17 \widehat{\mathfrak{S}}} \mathfrak{Z}^{-61/2} \|y\|_{L^\infty(0,L)} \,  e^{18 s  \mathfrak{S}^*- 17 \widehat{\mathfrak{S}}} \mathfrak{Z}^{-61/2} \|z_x\|_{L^2(0,L)} 
\\
& \quad 	 \leq 
	C \int_0^T e^{18 s  \mathfrak{S}^*- 17 \widehat{\mathfrak{S}}} \mathfrak{Z}^{-61/2} \|y\|_{H^1_0(0,L)} \, e^{18 s  \mathfrak{S}^*- 17 \widehat{\mathfrak{S}}} \mathfrak{Z}^{-61/2} \|z\|_{H^1_0(0,L)} 
	\\
& \quad   \leq  C \big\|e^{18 s  \mathfrak{S}^*- 17 \widehat{\mathfrak{S}}} \mathfrak{Z}^{-61/2} y   \big\|_{L^2(0,T; H^1_0(0,L))} \big\|  e^{18 s  \mathfrak{S}^*- 17 \widehat{\mathfrak{S}}} \mathfrak{Z}^{-61/2} z \big\|_{L^2(0,T; H^1_0(0,L))},
	\end{aligned} 
	\end{equation}
and thus the  continuity of the map \eqref{map-2} follows. 

Once we have \eqref{bound-nonlinear}, it is not difficult to conclude that $\Y \in \C^1(\G_1; \G_2)$.

\medskip 

\item Next, we check that $\Y^\prime(0,0,0,0,0)$ is surjective. In fact, we have 
$\Y(0,0,0,0,0) = (0, 0, 0, 0)$, and $\Y^\prime(0,0,0,0,0) : \G_1 \to G_2$ is given by 
	\begin{align*}
	&\Y^\prime(0,0,0,0,0)(u,v,p,q,h_1,h_2) \\
	&= \Big( u_t - \frac{1}{2} u_{xxx} - h_1 \mathds{1}_\omega , \, v_t + v_{xxx}  - h_2\mathds{1}_\omega, \,  -p_t + \frac{1}{2} p_{xxx}   - u\mathds{1}_\oo , \, -q_t -q_{xxx} - v\mathds{1}_\oo \Big) ,
	\end{align*}
which is surjective due to the controllability result given by \Cref{prop-null-cont-1}.

\smallskip

Set   $b_1=(0,0,0,0,0)$,  $b_2 =(0,0,0,0)$ and consider 
$\widetilde b=(\xi_1, \xi_2,0,0)\in \G_2$, where $(\xi_1,\xi_2)$ is the given external source term  in \eqref{sys_equiv_1}--\eqref{sys_equiv_2} or in \eqref{Control-system-main}.  Then, according to \Cref{thm-inverse},   
there is a $\delta>0$ such that for given $(\xi_1, \xi_2)$ verifying 
\begin{align*}
	\left\|(\xi_1, \xi_2,0,0)\right\|_{\G_2} < \delta ,
\end{align*} 
 there exists a solution-control pair  $(u,v,p,q,h_1,h_2)\in \G_1=\mathcal E$ to the system \eqref{sys_equiv_1}--\eqref{sys_equiv_2}. In particular, $p(0)=q(0)=0$ in $(0,L)$. This completes the proof of \Cref{thm:main_extended} which implies the proof for \Cref{thm:main}.  
\end{itemize}
\end{proof}

\begin{remark}
	The above local null-controllability result can also be derived by  considering the space
\begin{align*}  
	\mathcal F  = \Big\{ f  \mid  e^{s  \widehat{\mathfrak{S}} } \mathfrak{Z}^{-3/2} f \in L^2(0,T; H^{-1}(0,L))       \Big\} 
\end{align*}
	 instead of \eqref{space-F}. 	
\end{remark}

\section{Concluding remarks}\label{Section-conclusion}

In this work, we have proved the existence of insensitizing controls for a system of two KdV equations, notably known as Hirota-Satsuma system.
 The insensitizing problem is reformulated to  an equivalent null-control problem for the $4\times 4$ system 
 \eqref{sys_equiv_1}--\eqref{sys_equiv_2} with the action of  {\em only two localized controls}. 
 To prove this result, we first established a  Carleman estimate (see \Cref{Carleman-main}) for the adjoint system \eqref{adj-extended-1}--\eqref{adj-extended-2} to the linearized problem \eqref{sys_linear-1}--\eqref{sys_linear-2}. This helped us to find  a suitable observability inequality, namely \eqref{Observ-main},  which lead to prove the global null-controllability of the linearized model \eqref{sys_linear-1}--\eqref{sys_linear-2}. Finally, using the inverse mapping theorem, we conclude the local null-controllability of the system   \eqref{sys_equiv_1}--\eqref{sys_equiv_2}, and that implies the main result of this paper, i.e.,  \Cref{thm:main}.
 
 \medskip

  Let us now  present some concluding remarks concerning the problem addressed in this work.

	\begin{itemize} 
		\item[1.] {\em Choice of initial data.}  As in other insensitizing problems, the assumption on the zero  initial condition  in \Cref{thm:main} (i.e., $u_0=v_0=0$) is roughly related to the fact that system \eqref{sys_equiv_1}--\eqref{sys_equiv_2} is composed by forward and backward equations.
		In fact, it has already been indicated  in the work \cite{deTZ09} (see also \cite{deT00}) that it is   difficult to treat every  initial data while studying the insensitizing control problems. 
		

		
		\smallskip 
		
		\item[2.] {\em Condition on observation region.} The assumption $\mathcal O \cap \omega \neq \emptyset$ is essential to prove a suitable Carleman estimate and hence the  observability inequality (see \Cref{Carleman-main} and \Cref{Prop-obs-ineq} in this paper), which are the main ingredients for the proof of \Cref{thm:main_extended}. However, in   \cite{KdeT10} the authors  proved that in the simpler case of heat equation, this condition is not necessary if one deals with an {\em $\varepsilon$-insensitizing problem} (that is, $\Big|\frac{\partial J_\tau(u,v)}{\partial \tau}\big|_{\tau=0}\Big|\leq \varepsilon$). This remains  an open problem for our case.  
		
		\smallskip

	\item[3.] {\em Less observation term in sentinel functional.} Recall that, we have considered the following sentinel functional in this paper, 
	\begin{align}\label{senti-con}
		J_\tau (u,v) = \frac{1}{2} \iint_{(0,T)\times \oo} |u|^2 + \frac{1}{2} \iint_{(0,T)\times \oo} |v|^2 .
	\end{align} 
	If we drop any one of the two observations, then the extended linearized system \eqref{sys_linear-1}--\eqref{sys_linear-2}  will not be controllable anymore by the methodology used in this paper.   For instance, by choosing the sentinel functional 
	\begin{align*}
		\widehat J_\tau (u,v) = \frac{1}{2} \iint_{(0,T)\times \oo} |u|^2 
	\end{align*}
	would lead the extended system as
	\begin{align*}
		\begin{cases}
		u_t -\frac{1}{2} u_{xxx} - 3uu_x + 6vv_x = h_1 \mathds{1}_\omega + \xi_1 \ &\text{in } Q_T, \\
		v_t + v_{xxx} + 3uv_x = h_2 \mathds{1}_\omega + \xi_2 \ &\text{in } Q_T, \\
		-p_t + \frac{1}{2} p_{xxx} - 3pu_x + 3qv_x = u\mathds{1}_\oo \ &\text{in } Q_T, \\
		-q_t +  q_{xxx} + 6pv_x = 0 \ &\text{in } Q_T,
		\end{cases}
	\end{align*}
		with the same initial-boundary conditions as in \eqref{sys_equiv_1}--\eqref{sys_equiv_2}. But the associated linearized model to the above system is not null-controllable with two controls $(h_1, h_2)$  due to the lack of linear coupling in the equation of $q$. Thus, the choice of \eqref{senti-con} is somewhat necessary to study the insensitizing control property for our system \eqref{Control-system-main}, at least by means of the strategy developed in this paper.

				\smallskip 
				
\item[4.] {\em Reduction of control  input.}  It would be a challenging task  if we reduce the number of controls in the system \eqref{Control-system-main}. As for instance, if we drop the action of $h_2$ in the second equation of \eqref{Control-system-main}, it is quite clear that the extended linearized system \eqref{sys_linear-1}--\eqref{sys_linear-2}  cannot be controllable since there is no influence of control force in the equations of $v$ and $q$.   
 %
 %
 %
 Therefore, dealing with only one control instead of two in the main system \eqref{Control-system-main} is really delicate and it needs further attention. Generally speaking,  for the purpose of studying insensitizing control problems,  we need to tackle with the controllability of an extended system where the number of controls are already less than the equations (in our case, we have two controls $(h_1,h_2)$ acting in the $4\times 4$ system \eqref{sys_equiv_1}--\eqref{sys_equiv_2}). 
 Thus, reducing  the number of controls in the main system would lead more obstacle to achieve the required controllability criterion for the extended system.
 Similar phenomenon has  also been addressed in \cite{Bhandari2023insensitizing} in the context of insensitizing problem for a coupled fourth- and second-order parabolic system, and   in \cite{HSdT18} in the framework of hierarchic control problems.

		
	\end{itemize}

\section*{Acknowledgement} 
\begin{center} 
This  work  is partially supported by the Czech-Korean project GA\v{C}R/22-08633J.
\end{center}

	\bigskip

	\bibliographystyle{plain}

	\bibliography{ref_ins_ks}
	
\end{document}